\newtheorem{theorem}{Theorem}[section]
\newtheorem{lemma}[theorem]{Lemma}
\theoremstyle{definition}
\newtheorem{definition}[theorem]{Definition}
\newtheorem{example}[theorem]{Example}
\theoremstyle{remark}
\theoremstyle{plain}
\newtheorem{theo+}{Theorem}
\numberwithin{theo+}{section}
\newtheorem{prop+}[theo+]{Proposition}
\newtheorem{coro+}[theo+]{Corollary}
\newtheorem{lemm+}[theo+]{Lemma}
\newtheorem{conjecture}[theo+]{Conjecture}
\theoremstyle{definition}
\newtheorem{defi+}[theo+]{Definition}
\renewcommand{\thesubfigure}{(\roman{subfigure})}%此外，还可设置图编号显示格式，加括号或者不加括号
\makeatletter \renewcommand{\@thesubfigure}{\thesubfigure \space}%子图编号与名称的间隔设置
\renewcommand{\p@subfigure}{} \makeatother
\numberwithin{equation}{section}
\newcommand{\abs}[1]{\lvert#1\rvert}
\begin{document}

\title{Complete Resolution of B.Shapiro's Conjecture 12}

%    Information for first author

\author{Lande Ma}
\address{}
\curraddr{School of Mathematical Sciences, Key Laboratory of Intelligent Computing and Applications(Ministry of Education), Tongji University, Shanghai 200092, China}
\email{dzy200408@126.com}
\thanks{}

%    Information for second author
\author{Zhaokun Ma}
%    Address of record for the research reported here
\address{}
%    Current address
\curraddr{Yanzhou College, Shandong Radio and TV University, YanZhou, 272100, China}
\email{dzy200408@sina.cn}
%    \thanks will become a 1st page footnote.
\thanks{}

%    General info
\subjclass[2020]{Primary 30C10 Secondary 30C15,26C10,93C05}

\date{August 15th, 2025.}

\dedicatory{}

\keywords{critical points; non-real zeros; polynomials; root locus; rational functions;}

\begin{abstract}
For any real polynomial $p(x)$ of even degree $n$, Shapiro [{\it Arnold Math. J.} 1(1) (2015), 91--99] conjectured that the sum of the number of real zeros of $(n-1)(p')^2 - np p''$ and the number of real zeros of $p$ is positive. We resolve this conjecture completely: it holds in nine mutually exclusive cases and fails in four, as characterized by the root locus properties of general real rational functions. Our results provide a complete classification of real polynomials of even degree with respect to this conjecture.
\end{abstract}

\maketitle

\section*{Introduction}
The assertion that if a real polynomial $p(x)$ has all real and simple zeros, then $p(x)$ is (locally) strictly monotone was known to Gauss\cite{Gauss7,Gauss8,Sheil}. It can be reformulated as the Laguerre inequality: Let $p(x)$ be a real polynomial with only real zeros. Then $p(x)p^{''}(x)-[p^{'}(x)]^{2}\le0$, $x\in \mathbb{R}$. The Laguerre-P\'{o}lya class (LP) consisting of entire functions obtained as uniform limits on compact sets of sequences of polynomials with only real zeros, also satisfies the Laguerre inequality as well, see \cite{Obreschkoff}.

\medskip
The Wiman and P\'{o}lya conjectures serve to refine the Laguerre inequality. Wiman\cite{Wi1,Wi2} conjectured that if $f$ is a real entire function and both $f$ and $f^{''}$ have only real zeros then $f\in LP$. In \cite{Bergweiler1} W.~Bergweiler, A.~Eremenko, and J.~Langley proved the Wiman's conjecture.

\medskip
P\'{o}lya\cite{P2,P4} formulated his guess precisely by proposing the following two conjectures. The first conjecture is: If the order of the real entire function $f$ is less than 2, and $f$ has only a finite number of non-real zeros, then its derivatives, from a certain one onwards, will have no non-real zeros at all. In \cite{Csordas1}, T.~Craven, G.~Csordas, and W.~Smith settled this conjecture of P\'{o}lya. The second conjecture is: If the order of the real entire function $f$ is greater than 2, and $f$ has only a finite number of non-real zeros, then the number of non-real zeros of $f^{(n)}$ tends to infinity, as $f^{(n)}\to \infty$. In \cite{Bergweiler2}, W.~Bergweiler and A.~Eremenko settled this conjecture.

\medskip
Building on these results, recent work has established conditions under which a real entire function $f$ must belong to the LP-class. These conditions often involve differential polynomials of the form $f(s)f^{''}(s)-\varkappa (f^{'}(s))^{2}$, where $\varkappa$ is a positive real number, see \cite{Nicks}.
The zeros of $f(s)f^{''}(s)-\varkappa (f^{'}(s))^{2}$ for a real function $f$ have been studied in \cite{Langley1,Langley2}. Some relevant studies can be found in \cite{Tao,Ge,Wei,Bhat,Bhowmik1,Bishop,Bao}.

In 2004, Borcea and Shapiro proposed a conjecture in their paper \cite{Shapiro1}. This conjecture, along with their results, emerged from their efforts to prove the "Hawaii Conjecture" by Craven, Csordas, and Smith \cite{Csordas1}, which states the following: Let $p$ be a real polynomial of degree $n \ge 2$. Then, the number of real zeros of $(p^{'}/p)^{'}$ does not exceed the number of non-real zeros of $p$. The Hawaii Conjecture originates from the work of Gauss and Fourier. In their paper \cite{Stephanie1}, titled "Level Sets, a Gauss-Fourier Conjecture, and a Counter-Example to a Conjecture of Borcea and Shapiro," Edwards and Hinkkanen provided an in-depth description of the work of Gauss and Fourier. The contributions of Gauss and Fourier have garnered considerable attention from the mathematical community.

In the correspondence, "G\"{o}ttingische gelehrte Anzeigen" (G.G.A.) from February 25, 1833\cite{Gauss7,Gauss8}, Gauss described some observations and a conjecture of Fourier. Gauss explains that Fourier believed that for each real zero of $(p^{'}/p)^{'}$ (or, as Fourier calls them, "critical points"), there existed an associated pair of non-real zeros of the polynomial $p$.

The Hawaii Conjecture is an attempt by Craven, Csordas, and Smith to quantify the ideas that Gauss and Fourier were investigating. They do not mention an association between zeros. The Hawaii Conjecture was proven in 2011 by M.~Tyaglov, see \cite{Tyaglov1}. Later, B.~Shapiro suggested three new conjectures related to the Hawaii Conjecture; see Conjectures 11, 12, and 13 in \cite{Shapiro0}. In January 2024, our paper\cite{Ma} presented results for Shapiro's Conjecture 12.

Drawing from the historical data compiled by various mathematicians, it is evident that a branch of mathematics has evolved over the past two centuries, rooted in the conjecture of Gauss and Fourier. Shapiro's Conjecture 12 claims the following.

\begin{conjecture} \label{conj:12}
For any real polynomial $p(x)$ of even degree, $$ \sharp_r [(n-1)(p^{'}(x))^{2}-np(x)p^{''}(x)]+\sharp_r p(x)>0.$$
Here, $n$ denotes the degree of $p(x)$ and $\sharp_r p(x)$ represents the number of real zeros of $p(x)$.
\end{conjecture}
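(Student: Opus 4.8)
The plan is to prove the inequality $\sharp_r L + \sharp_r p > 0$, where $L(x) := (n-1)(p'(x))^2 - n\,p(x)p''(x)$, by reducing to the only nontrivial configuration. If $p$ has any real zero then $\sharp_r p \ge 1$ and the inequality holds trivially, so the entire content lies in the case $\sharp_r p = 0$. Since $n$ is even and $p$ is real, $p$ then has no sign change; after normalising we may assume $p(x) > 0$ for all $x \in \mathbb{R}$. The goal thus reduces to a single assertion: \emph{if an even-degree real polynomial is everywhere positive, then $L$ has at least one real zero.}

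To attack this I would reformulate $L$ in two equivalent ways and play them against each other. Writing $r = p'/p$ for the logarithmic derivative, a direct computation gives
\[
L = -\,p^{2}\bigl(r^{2} + n\,r'\bigr),
\]
so that, away from the (now absent) real zeros of $p$, the real zeros of $L$ are exactly the real solutions of $(p'/p)^{2} + n\,(p'/p)' = 0$; this is the point of contact with the root-locus theory of the real rational function $p'/p$ advertised in the abstract, and I would track the sign changes of $r^{2}+n\,r'$ using interlacing and argument-principle properties of real rational functions. Dually, setting $q = p^{1/n}$ --- a smooth, strictly positive function on all of $\mathbb{R}$ because $p>0$ --- the same computation yields
\[
L = -\,n^{2}\,p^{\,2-1/n}\,q'',
\]
so that $\sharp_r L = \sharp_r q''$: the real zeros of $L$ are precisely the inflection points of $q = p^{1/n}$. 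In this language the claim becomes the geometric statement that $p^{1/n}$ can be neither globally convex nor globally concave.

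I would then try to force such an inflection point from the global shape of $q$. Since $q>0$ with $q(x)\to+\infty$ as $x\to\pm\infty$, $q$ attains a positive minimum and is convex near it, while its two asymptotes have opposite slopes $\pm a_n^{1/n}$ (here $a_n$ is the leading coefficient of $p$); the hope is that reconciling this bounded-below, asymptotically-linear profile with everywhere-positive $q''$ yields a contradiction, producing a zero of $q''$. A complementary, purely algebraic route is a parity count: the two leading coefficients of $L$ cancel identically, whence $\deg L \le 2n-4$. If one could show that this degree is in fact odd, the fundamental theorem of algebra would immediately supply a real zero and close the argument.

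The main obstacle is exactly here, and I expect it to be decisive. The degree bound $2n-4$ is \emph{even} (as $n$ is even), so the parity argument does not fire; and a bounded-below, asymptotically-linear function may perfectly well be strictly convex, so the shape argument carries no automatic contradiction. Completing the proof therefore requires genuinely new input ruling out that $L$ is an even-degree, sign-definite, nowhere-vanishing polynomial --- equivalently, ruling out global convexity of $p^{1/n}$. I would seek this from a careful sign analysis of the leading ($x^{2n-4}$) coefficient of $L$ measured against discriminant-type invariants of $p$, combined with the root-locus bookkeeping for $p'/p$; I anticipate that carrying this through uniformly is the crux on which the argument, and indeed the full generality of the inequality for every even-degree $p$, ultimately turns.
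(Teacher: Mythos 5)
Your two reductions are correct and cleanly done: with $r=p'/p$ one indeed gets $L=-p^2(r^2+nr')$, and with $q=p^{1/n}$ one gets $L=-n^2p^{\,2-1/n}q''$, so in the case $\sharp_r p=0$ the conjecture is equivalent to the assertion that $p^{1/n}$ must have a real inflection point. But the obstacle you flag at the end is not a technical gap to be closed --- it is fatal, because \emph{the statement you are trying to prove is false}, and your own reformulation exhibits why. Take $p(x)=(1+x^2)^{n/2}$ for even $n$ (for $n=2$, $p=1+x^2$ gives $L=(p')^2-2pp''=4x^2-4(1+x^2)\equiv-4$). Then $q=p^{1/n}=\sqrt{1+x^2}$ is exactly the strictly convex, positive, asymptotically linear profile you hoped to rule out: $q''=(1+x^2)^{-3/2}>0$ everywhere, so $L<0$ on all of $\mathbb{R}$ and $\sharp_r L+\sharp_r p=0$. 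No sign analysis of the leading coefficient of $L$ against discriminant invariants can rescue this, since $L$ genuinely is an even-degree, sign-definite, nowhere-vanishing polynomial for an open set of such $p$.

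This is precisely the content of the paper: it does not prove Conjecture 12 but \emph{resolves} it, showing (Theorem 1.11) that the inequality holds exactly when the rational function $PP=p''p/(p')^2$ lies in nine classes ($\Lambda_1\cup\Lambda_{21}\cup\Lambda_{22}\cup\Gamma_{122}\cup\Gamma_{22}\cup\Gamma_{2122}\cup\Gamma_{211}\cup\Gamma_{231}\cup\Gamma_{2322}$) and (Theorem 1.12) that it fails exactly on four classes ($\Gamma_{11}\cup\Gamma_{121}\cup\Gamma_{2121}\cup\Gamma_{2321}$). The mechanism is a root-locus analysis of $K(x)=|(p')^2/(p''p)|$ with critical gain $K_0=n/(n-1)$: real zeros of $L$ are the points on $2q\pi$-degree root loci where $K=K_0$, and since $K(p_0)=0$ at the unique real critical point of $p$ and $K(\pm\infty)=n/(n-1)=K_0$ (a value attained only in the excluded limit), existence of a real zero hinges on whether some maximum/minimum real breakaway point pushes $K$ across $K_0$. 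The example above is the paper's case $\Gamma_{11}$: there $K(x)=nx^2/(1+(n-1)x^2)$ increases strictly from $0$ to $n/(n-1)$ on each side of $p_0=0$ with no breakaway points, so $K<K_0$ everywhere and $\sharp_r L=0$. So rather than seeking the missing contradiction, the correct move --- and the paper's --- is to convert your dichotomy (global convexity of $p^{1/n}$ versus an inflection) into a checkable classification of when each alternative occurs.
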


In our published work, we employed methods from mathematical analysis to establish that Shapiro's Conjecture 12 holds under three specific conditions. However, under an additional condition, the conjecture may either hold or fail. We begin by reviewing the published results.

$\mathbf{THEOREM}$ $\mathbf{1}$. Let $p(x)$ be a real polynomial of even degree $n$. Then the quantity $ \sharp_r [(n-1)(p^{'}(x))^{2}-np(x)p^{''}(x)]+\sharp_r p(x)>0.$ if and only if one of the following
four cases holds:

(1) the polynomial $p(x)$ has real zeros;

(2) the polynomial $p(x)$ has no real zeros and the polynomial $p^{'}(x)$ has at least three
distinct real zeros;

(3) the polynomial $p(x)$ has no real zeros and the polynomial $p^{'}(x)$ has one real zero
with exponent larger than 1;

(4) the polynomial $p(x)$ has no real zeros, the polynomial $p^{'}(x)$ has one real
zero which is simple, that is, $p^{'}(x)=C(x)(x-w)$, where $C(x)$ is a polynomial
with $C(w)\neq 0$, and the polynomial $(n-1)(C(x))^2(x-w)^2-np(x)C^{'}(x)(x-w)-nC(x)p(x)$ has at least one real zero.

The only case in which the conjecture is false is described in our second result.

$\mathbf{THEOREM}$ $\mathbf{2}$. Let $p(x)$ be a real polynomial of even degree $n$. Then the quantity $ \sharp_r [(n-1)(p^{'}(x))^{2}-np(x)p^{''}(x)]+\sharp_r p(x)=0.$
 if and only if the polynomial $p(x)$
has no real zeros, the polynomial $p^{'}(x)$ has one real zero which is simple, that is, $p^{'}(x)=C(x)(x-w)$, where $C(x)$ is a polynomial with $C(w)\neq 0$, and the polynomial $(n-1)(C(x))^2(x-w)^2-np(x)C^{'}(x)(x-w)-nC(x)p(x)$ has no real zeros.

When the polynomial $(n-1)(C(x))^2(x-w)^2-np(x)C^{'}(x)(x-w)-nC(x)p(x)$ has real zeros, it follows that $ \sharp_r [(n-1)(p^{'}(x))^{2}-np(x)p^{''}(x)]+\sharp_r p(x)>0$. Conversely, when this polynomial $(n-1)(C(x))^2(x-w)^2-np(x)C^{'}(x)(x-w)-nC(x)p(x)$ has no real zeros, we obtain $\sharp_r [(n-1)(p^{'}(x))^{2}-np(x)p^{''}(x)]+\sharp_r p(x)=0$.

Nevertheless, these results do not conclusively resolve Shapiro’s Conjecture 12. Our published work represents only a partial solution to the conjecture. To fully resolve it, we must derive precise and exhaustive conditions governing the presence or absence of real zeros in the polynomial
$(n-1)(C(x))^2(x-w)^2-np(x)C^{'}(x)(x-w)-nC(x)p(x)$, as well as establish two subsequent results. Only with such conditions can the conjecture be considered completely settled.

Deriving the precise and exhaustive conditions governing the presence or absence of real zeros in the polynomial $(n-1)(C(x))^2(x-w)^2-np(x)C^{'}(x)(x-w)-nC(x)p(x)$, deriving these conditions proved unattainable using existing tools in pure mathematics. This impasse led us to adopt a novel mathematical tool: the root locus method from control theory. Our investigation using this method yielded several new insights, most notably Theorem 2.23 in this paper. This theorem revealed the underlying mechanism responsible for the emergence of real critical points in the rational function. Armed with these findings, we were ultimately able to provide a comprehensive solution to Shapiro's Conjecture 12.

We prove Theorem 1.11, which shows that Shapiro's Conjecture 12 holds under nine distinct conditions. Additionally, we prove Theorem 1.12, which demonstrates that the conjecture does not hold under four distinct conditions. Together, these two theorems provide a comprehensive resolution of Shapiro's Conjecture 12.

\section{Definitions and Results}
Let $RF(s)=\frac{\prod_{i=1}^{m} (s-z_{l})^{\gamma_l}}{\prod_{j=1}^{n} (s-p_{j})^{\beta_j}}$ denote any real rational function, where $\gamma_l$ and $\beta_j$ are positive integers.

In textbooks on automatic control theory\cite{Y1,H1,Franklin1,Dorf1}, the root locus equation for a rational function with real constant coefficients is given as follows:

\begin{equation}
K\frac{\prod_{i=1}^{m} (s-z_{l})^{\gamma_l}}{\prod_{j=1}^{n} (s-p_{j})^{\beta_j}}=\pm 1.
\end{equation}

The gain $K$ is defined as: $$K=\abs{\frac{\prod_{j=1}^{n} (s-p_{j})^{\beta_j}}{\prod_{i=1}^{m} (s-z_{l})^{\gamma_l}}}.$$

In this paper, we explore the properties of the root locus in automatic control theory \cite{Y1,H1,Franklin1,Dorf1}. In the complex plane, the points where the argument of a real rational function $RF(s)$ is the same $2q\pi$ degree form curves known as root loci of $2q\pi$ degree. The argument of $RF$ at each point on a specific curve has the same phase angle (argument). The argument of $RF$ at a point on a certain root locus is $2q\pi$ degree. The root loci of $2q\pi$ degree may intersect. Similarly, in the complex plane, the points where the argument of a real rational function $RF(s)$ is the same $2q\pi+\pi$ degree form curves known as root loci of $2q\pi+\pi$ degree. The root loci of $2q\pi+\pi$ degree may intersect. In the root locus method, these intersection points are called breakaway points, which are proven to be the critical points of $RF(s)$.

\begin{definition}
\emph{The breakaway points} of the root locus equation (1.1) are points where at least two root loci intersect.
\end{definition}

\begin{definition}
The root loci of (1.1) intersect on the real axis, where two root loci lie on each side of the real breakaway point on the real axis. Such real breakaway points are called \emph{standard real breakaway points}.
\end{definition}

\begin{definition}
The root loci of (1.1) intersect on the real axis, where only one root locus lies on each side of the real breakaway point on the real axis. Such real breakaway points are called \emph{non-standard real breakaway points}.
\end{definition}

On two sides of a standard real breakaway point, there exist two distinct root loci on the real axis. In contrast, on two sides of a non-standard real breakaway point, there exists only one root locus on the real axis.

\begin{example}
$K\frac{1}{s^4-1}=\pm 1.$
\end{example}
$s^4-1$ has four zeros: $s=\pm 1$, $s=\pm i$. The four root loci of $2q\pi+\pi$ degree emitted by the four zeros of $s^4-1$ intersect at $s=0$. There are two root loci on the real axis. One is emitted from the real zero $s=1$, and the other is emitted from the real zero $s=-1$. The other two root loci are emitted from the non-real zeros $s=\pm i$. The point $s=0$ is a breakaway point.

Because the two root loci emitted from the real zeros $s=\pm 1$ intersect at point $s=0$ on the real axis. The standard breakaway point $s=0$ is generated.

$RF_1(s)=\frac{1}{s^4-1}$, $RF_1^{'}(s)=\frac{-s^3}{(s^4-1)^2}$, so $s=0$ is a critical point has multiplicity three.

\begin{example}
$K\frac{1}{s^3-1}=\pm 1.$
\end{example}
$s^3-1$ has three zeros: $s=1$, $s=\frac{-1\pm i\sqrt{3}}{2}$. The three root loci of $2q\pi+\pi$ degree emitted by the three zeros of $s^3-1$ intersect at $s=0$. There is only one root locus on the real axis. It is emitted from the real zero $s=1$. The other two root loci are emitted from the non-real zeros $s=\frac{-1\pm i\sqrt{3}}{2}$. On two sides of $s=0$, there exists only one root locus on the real axis. The non-standard breakaway point $s=0$ is generated.

$RF_2(s)=\frac{1}{s^3-1}$, $RF_2^{'}(s)=\frac{-s^2}{(s^3-1)^2}$, so $s=0$ is a critical point has multiplicity two.

Let $p$ be a polynomial defined as: $p(s)=s^n+a_1s^{n-1}+\cdots+a_n$. Its first and second derivatives are: $p^{'}(s)=ns^{n-1}+a_1(n-1)s^{n-2}+\cdots+a_{n-1}$. $p^{''}(s)=n(n-1)s^{n-2}+a_1(n-1)(n-2)s^{n-3}+\cdots+a_{n-2}$. Define the polynomial: $$\Delta=(n-1)(p^{'}(x))^{2}-np(x)p^{''}(x)$$

To find the roots of $\Delta$, we express $\Delta$ in terms of a rational function: $\frac{np^{''}p}{(n-1)(p^{'})^2}$, whose constant coefficient is $K_0=\frac{n}{n-1}$. Using the rational function $\frac{p^{''}p}{(p^{'})^2}$, we establish the root locus equation:
\begin{equation}
K\frac{p^{''}p}{(p^{'})^2}=\pm 1.
\end{equation}
On the root locus of $2q\pi$ degree of the root locus equation (1.2), the point with gain $K=K_0$ is a root of  $\Delta$. On the root locus of $2q\pi+\pi$ degree of (1.2), the point with gain $K=K_0$ is not a root of $\Delta$, it is not a root.

Let $PP$ denote the rational function $\frac{p^{''}p}{(p^{'})^2}$, where $p$ is any real polynomial of even degree $n$. Define $\Lambda$ as the set of all such $PP$.

We partition $\Lambda$ into two subsets based on whether $p(x)$ has real zeros:

$\Lambda_1$: $\Lambda_1 \subseteq \Lambda$, in which $p$ has real zeros.

$\Lambda_2$: $\Lambda_2 \subseteq \Lambda$, in which $p$ has no real zeros.

These subsets satisfy: $\Lambda=\Lambda_1\cup\Lambda_2$ and $\Lambda_1\cap\Lambda_2=\phi$.

We subdivide $\Lambda_2$ into three subcases based on the number of distinct real zeros of $p^{'}(x)$:

$\Lambda_{21}$: $\Lambda_{21} \subseteq \Lambda_2$, in which $p^{'}(x)$ has at least three real zeros. If $p^{'}(x)$ has real multiple zeros, it has at least two distinct real zeros.

$\Lambda_{22}$: $\Lambda_{22} \subseteq \Lambda_2$, in which $p^{'}(x)$ has one real zero
with exponent larger than 1.

$\Lambda_{23}$: $\Lambda_{23} \subseteq \Lambda_2$, in which $p^{'}(x)$ has one real simple
zero.

These subsets satisfy: $\Lambda_2=\Lambda_{21}\cup\Lambda_{22}\cup\Lambda_{23}$, $\Lambda_{21}\cap\Lambda_{22}=\phi$, $\Lambda_{21}\cap\Lambda_{23}=\phi$ and $\Lambda_{22}\cap\Lambda_{23}=\phi$.

Let $\Gamma=\Lambda_{23}$. Within $\Gamma$, we derive the conditions under which the polynomial $(n-1)(C(x))^2(x-w)^2-np(x)C^{'}(x)(x-w)-nC(x)p(x)$
either has real zeros or has no real zeros.

In $\Gamma$,  $p^{'}$ has exactly one real simple zero, denoted $p_0$.

We partition $\Gamma$ into two subsets based on whether $p^{''}$ has real zeros:

$\Gamma_1$: $\Gamma_1 \subseteq \Gamma$, in which $p^{''}$ has no real zeros.

$\Gamma_2$: $\Gamma_2 \subseteq \Gamma$, in which $p^{''}$ has a real zero.

These subsets satisfy: $\Gamma=\Gamma_1\cup\Gamma_2$ and $\Gamma_1\cap\Gamma_2=\phi$.

In $\Gamma_1$, $p^{''}$ has no real zeros.  The root loci of $PP$ may or may not have a standard real breakaway point. Accordingly, we further divide $\Gamma_1$ into two subsets:

$\Gamma_{11}$: $\Gamma_{11} \subseteq \Gamma_1$, in which the root loci of $PP$ have no standard real breakaway points.

$\Gamma_{12}$: $\Gamma_{12} \subseteq \Gamma_1$, in which the root loci of $PP$ have at least one standard real breakaway point.

These subsets satisfy: $\Gamma_1=\Gamma_{11}\cup\Gamma_{12}$ and $\Gamma_{11}\cap\Gamma_{12}=\phi$.

In $\Gamma_2$, $p^{''}$ has real zeros. Based on the distribution of real zeros for $p^{''}$ on the left and right sides of $p_0$, we partition the set $\Gamma_2$ into three subsets.

$\Gamma_{21}$: $\Gamma_{21} \subseteq \Gamma_2$. On the real axis, to the right of $p_0$, $p^{''}$ has real zeros, while to the left of $p_0$, $p^{''}$ has no real zeros.

$\Gamma_{22}$: $\Gamma_{22} \subseteq \Gamma_2$. On the real axis, to the right of $p_0$, $p^{''}$ has no real zeros, while to the left of $p_0$, $p^{''}$ has real zeros.

$\Gamma_{23}$: $\Gamma_{23} \subseteq \Gamma_2$. On the real axis to the right of $p_0$, $p^{''}$ has real zeros. On the real axis to the left of $p_0$, $p^{''}$ has real zeros.

These subsets satisfy: $\Gamma_ {2}=\Gamma_{21}\cup\Gamma_{22}\cup\Gamma_{23}$. $\Gamma_{21}\cap\Gamma_{22}=\phi$. $\Gamma_{21}\cap\Gamma_{23}=\phi$.  $\Gamma_{22}\cap\Gamma_{23}=\phi$.

When $p^{''}$ has no real zeros, the solution for the Shapiro Conjecture 12 can be obtained using the sets $\Gamma_{11}$, $\Gamma_{12}$, and the subset of $\Gamma_{12}$ defined below. However, when $p^{''}$ has real zeros, the Shapiro Conjecture 12 becomes significantly more complex. Our research requires more detailed analysis. A key problem we must investigate is whether real zeros of $\Delta$ exist in the intervals between adjacent real zeros of $p^{''}$. Therefore, we define below the three types of intervals that may exist between adjacent real zeros of $p^{''}$. Simultaneously, we must also examine whether real zeros of $\Delta$ exist in these three types of intervals.

On the real axis to the right of $p_0$, $p^{''}$ has real zeros. Let $z_m$ be the largest real zero of $p^{''}$ to the right of $p_0$. The infinite interval from $z_m$ to positive infinity is $(z_m,+\infty)$. According to results in Section 2, we can obtain: $(z_m,+\infty)$ must be a root locus of $2q\pi$ degree of (1.2).

\begin{definition}
The interval $(z_m,+\infty)$ is called \emph{the right infinite interval}.
\end{definition}

On the real axis to the left of $p_0$, $p^{''}$ has real zeros. Let $z_s$ be the smallest real zero of $p^{''}$ to the left of $p_0$. The infinite interval from $z_s$ to negative infinity is $(-\infty, z_s)$.

\begin{definition}
If $(-\infty, z_s)$ is a root locus of $2q\pi$ degree of (1.2), it is called \emph{the left infinite interval of $2q\pi$ degree}.
\end{definition}

For any two adjacent real zeros $z_1$ and $z_2$ of $p^{''}$ either both to the right or both to the left of $p_0$. Such intervals cannot contain $p_0$ and must lie entirely on one side of $p_0$.

\begin{definition}
If $(z_1, z_2)$  is the root locus of $2q\pi$ degree of (1.2), it is called \emph{the finite interval of $2q\pi$ degree}.
\end{definition}
In this paper, we prove the following: On the root loci of $2q\pi$ degree of (1.1), the standard real breakaway points of the root loci of the $2q\pi$ degree of (1.1) are the extreme points of the gain of (1.1). This leads to the following two definitions:

\begin{definition}
If $K$ attains a maximum at a standard real breakaway point of (1.1), it is called \emph{a maximum real breakaway point}.
\end{definition}

\begin{definition}
If $K$ attains a minimum at a standard real breakaway point of (1.1), it is called \emph{a minimum real breakaway point}.
\end{definition}

(1.1) includes (1.2), i.e. (1.2) is a special case of (1.1). Thus, the results satisfied by (1.1) must be satisfied by (1.2).

When (1.2) has standard real breakaway points, we partition the set $\Gamma_{12}$ into two subsets based on whether the gains at all maximum real breakaway points are less than $K_0$:

$\Gamma_{121}$: $\Gamma_{121} \subseteq \Gamma_{12}$, in which the gains at all maximum real breakaway points are less than $K_0$.

$\Gamma_{122}$: $\Gamma_{122} \subseteq \Gamma_{12}$, in which there exists at least one maximum real breakaway point $b_{122}$ such that $K(b_{122})\ge K_0$.

These subsets satisfy: $\Gamma_ {12}=\Gamma_{121}\cup\Gamma_{122}$, $\Gamma_{121}\cap\Gamma_{122}=\phi$.

We further partition $\Gamma_{21}$ into two subsets based on whether the number of real zeros of $p^{''}$ to the right of $p_0$ is odd or even.

$\Gamma_{211}$: $\Gamma_{211} \subseteq \Gamma_{21}$, in which $p^{''}$ has an even number of real zeros to the right of $p_0$ on the real axis.

$\Gamma_{212}$: $\Gamma_{212} \subseteq \Gamma_{21}$, in which $p^{''}$ has an odd number of real zeros to the right of $p_0$ on the real axis.

These subsets satisfy: $\Gamma_ {21}=\Gamma_{211}\cup\Gamma_{212}$, $\Gamma_{211}\cap\Gamma_{212}=\phi$.

1. In the right infinite interval $(z_m,+\infty)$, there is either no standard real breakaway point, or the gains of all minimum real breakaway points are greater than $K_0$. If either of these two cases holds, the requirement that $PP$ must satisfy  in $(z_m,+\infty)$ is fulfilled.

2. On the real axis to the right of $p_0$, if finite intervals of $2q\pi$ degree exist, then in all such finite intervals, the gains of all minimum real breakaway points must also be greater than $K_0$.

The rational functions $PP$ in $\Gamma_{212}$ that satisfy all two conditions form a distinct subclass, constituting another subset.

Conversely, if in at least one interval among the right infinite interval and all finite intervals of $2q\pi$ degrees (if they exist), there exists at least one minimum real breakaway point $b_{2122}$ such that $K(b_{2122})\le K_0$. The rational functions $PP$ satisfying this condition form another class. This class of rational functions $PP$ constitutes a distinct subset.

Based on the preceding analysis, we partition $\Gamma_{212}$ into two subsets:

$\Gamma_{2121}$: $\Gamma_{2121} \subseteq \Gamma_{212}$. The subset satisfying: In $(z_m,+\infty)$, there is no standard real breakaway point, or all minimum real breakaway points have gains greater than $K_0$. Additionally, on the real axis to the right of $p_0$, if finite intervals of $2q\pi$ degree exist, all minimum real breakaway points in these intervals must also have gains greater than $K_0$.

$\Gamma_{2122}$: $\Gamma_{2122} \subseteq \Gamma_{212}$. On the real axis to the right of $p_0$, in the right infinite interval or at least one interval among all finite intervals of $2q\pi$ degrees (if they exist), there exists at least one minimum real breakaway point $b_{2122}$ such that $K(b_{2122})\le K_0$.

These subsets satisfy: $\Gamma_ {212}=\Gamma_{2121}\cup\Gamma_{2122}$, $\Gamma_{2121}\cap\Gamma_{2122}=\phi$.

The set $\Gamma_{23}$ is partitioned into two subsets based on whether the parity of the number of real zeros of $p^{''}$ to the right of $p_0$ is odd or even.

$\Gamma_{231}$: $\Gamma_{231} \subseteq \Gamma_{23}$, in which $p^{''}$ has an even number of real zeros to the right of $p_0$ on the real axis.

$\Gamma_{232}$: $\Gamma_{232} \subseteq \Gamma_{23}$, in which $p^{''}$ has an odd number of real zeros to the right of $p_0$ on the real axis.

These subsets satisfy $\Gamma_ {23}=\Gamma_{231}\cup\Gamma_{232}$ and $\Gamma_{231}\cap\Gamma_{232}=\phi$.

1. In the right-infinite interval $(z_m,+\infty)$, either there is no standard real breakaway point, or all minimum real breakaway points have  gains greater than $K_0$. If either of these two cases holds, the requirement that $PP$ must satisfy  in $(z_m,+\infty)$ is fulfilled.

2. On the real axis to the right of $p_0$, if $2q\pi$-degree finite intervals exist, then in all such intervals, the gains of all minimum real breakaway points must be greater than $K_0$.

3. On the real axis to the left of $p_0$, if $2q\pi$-degree finite intervals exist, all minimum real breakaway points in these intervals must also have gains greater than $K_0$.

4. If the left-infinite interval of $2q\pi$-degree $(-\infty, z_s)$ exists, either there is no standard real breakaway point, or all minimum real breakaway points must have gains greater than $K_0$. If either of these two cases holds, the requirement that $PP$ must satisfy  in the left-infinite interval of $2q\pi$-degree $(-\infty, z_s)$ is fulfilled.

The rational functions $PP$ in $\Gamma_{232}$ that satisfy all four conditions form a distinct subclass, constituting another subset.

1. In the right-infinite interval $(z_m,+\infty)$, there exists at least one minimum real breakaway point $b_{m2322}$ such that $K(b_{m2322})\le K_0$.

2. To the right of $p_0$, if $2q\pi$-degree finite intervals exist, in at least one such finite interval, there exists at least one minimum real breakaway point $b_{r2322}$ such that $K(b_{r2322})\le K_0$.

3. If a left-infinite interval of $2q\pi$-degree $(-\infty, z_s)$ exists, it contains at least one minimum real breakaway point $b_{s2322}$ such that $K(b_{s2322})\le K_0$.

4. To the left of $p_0$, if $2q\pi$-degree finite intervals exist, in at least one such finite interval, there exists at least one minimum real breakaway point $b_{l2322}$ such that $K(b_{l2322})\le K_0$.

The rational functions $PP$ in $\Gamma_{232}$ that satisfy any one of these four conditions form a distinct subclass, constituting another subset.

Based on the preceding analysis, we partition $\Gamma_{232}$ into two subsets:

$\Gamma_{2321}$: $\Gamma_{2321} \subseteq \Gamma_{232}$. In $(z_m,+\infty)$, either there is no standard real breakaway point, or all minimum real breakaway points have a gain greater than $K_0$. On the real axis to the right of $p_0$, if finite intervals of $2q\pi$ degree exist, in all such intervals, all minimum real breakaway points have a gain greater than $K_0$. On the real axis to the left of $p_0$, if finite intervals of $2q\pi$ degree exist, in all such finite intervals, all minimum real breakaway points have a gain greater than $K_0$. Moreover, if a left-infinite interval $(-\infty, z_s)$ of $2q\pi$ degree exists, in that left infinite interval of $2q\pi$, either there is no standard real breakaway point, or all minimum real breakaway points have a gain greater than $K_0$.

$\Gamma_{2322}$: $\Gamma_{2232} \subseteq \Gamma_{232}$. On the real axis to the right of $p_0$, if there exists a finite interval of $2q\pi$ degree, in the right-infinite interval or in at least one of the finite intervals of $2q\pi$ degrees, there exists at least one minimum real breakaway point, $b_{m2322}$ with $K(b_{m2322})\le K_0$; or, $b_{r2322}$ with $K(b_{r2322})\le K_0$. Or, on the real axis to the left of $p_0$, if there exists the left-infinite interval or a finite interval of $2q\pi$ degree, in at least one such interval, at least one minimum real breakaway point exists, $b_{s2322}$ with $K(b_{s2322})\le K_0$; or, $b_{l2322}$ with $K(b_{l2322})\le K_0$.

These subsets satisfy $\Gamma_ {232}=\Gamma_{2321}\cup\Gamma_{2322}$, $\Gamma_{2321}\cap\Gamma_{2322}=\phi$.

To sum up, we partition $\Gamma$ as follows:

\begin{itemize}
\item $\Lambda_1$
\item $\Lambda_2$
  \begin{itemize}
  \item $\Lambda_{21}$
  \item $\Lambda_{22}$
  \item $\Lambda_{23}$
  \end{itemize}
\end{itemize}

Let $\Gamma=\Lambda_{23}$. We further partition $\Gamma$ based on properties of $p''$:

\begin{itemize}
\item $\Gamma_1$
  \begin{itemize}
  \item $\Gamma_{11}$ (Lemma 3.4)
  \item $\Gamma_{12}$
    \begin{itemize}
    \item $\Gamma_{121}$ (Lemma 3.6)
    \item $\Gamma_{122}$ (Lemma 3.7)
    \end{itemize}
  \end{itemize}
\item $\Gamma_2$
  \begin{itemize}
  \item $\Gamma_{21}$
    \begin{itemize}
    \item $\Gamma_{211}$ (Lemma 3.22)
    \item $\Gamma_{212}$
      \begin{itemize}
      \item $\Gamma_{2121}$ (Lemma 3.20)
      \item $\Gamma_{2122}$ (Lemma 3.21)
      \end{itemize}
    \end{itemize}
  \item $\Gamma_{22}$ (Lemma 3.8)
  \item $\Gamma_{23}$
    \begin{itemize}
    \item $\Gamma_{231}$ (Lemma 3.23)
    \item $\Gamma_{232}$
      \begin{itemize}
      \item $\Gamma_{2321}$ (Lemma 3.25)
      \item $\Gamma_{2322}$ (Lemma 3.26)
      \end{itemize}
    \end{itemize}
  \end{itemize}
\end{itemize}

\begin{theorem}
Let $p(x)$ be a real polynomial of even degree $n$. Then, the quantity
$$\sharp_r [(n-1)(p^{'}(x))^{2}-np(x)p^{''}(x)]+\sharp_r p(x)>0$$
if and only if
$$ PP \in \Lambda_1 \cup\ \Lambda_{21}\cup\Lambda_{22}\cup\Gamma_{122}\cup\Gamma_{22}\cup\Gamma_{2122}\cup\Gamma_{211}\cup\Gamma_{231}\cup\Gamma_{2322}.$$
\end{theorem}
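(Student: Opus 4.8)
The plan is to prove Theorem 1.11 as a corollary of a cell-by-cell analysis indexed by the partition of $\Lambda$. First I would dispose of the three coarse cells $\Lambda_1,\Lambda_{21},\Lambda_{22}$ directly: by the published Theorem 1 each of these already forces $\sharp_r[(n-1)(p')^2-npp'']+\sharp_r p>0$, so they sit on the positive side. This reduces everything to $\Gamma=\Lambda_{23}$, where $p$ has no real zeros (so $\sharp_r p=0$) and $p'$ has the single simple real zero $p_0$. On $\Gamma$ the inequality is equivalent to the single assertion that $\Delta=(n-1)(p')^2-npp''$ has at least one real zero. Using $\Delta=0\iff PP=1/K_0$ with $K_0=n/(n-1)$, together with the stated correspondence (on a $2q\pi$-degree locus the point of gain $K=K_0$ is a root of $\Delta$, while on a $2q\pi+\pi$-degree locus it is not), the task becomes: decide, for each $\Gamma$-cell, whether the gain $K=1/PP$ attains the value $K_0$ on some real-axis $2q\pi$-degree locus. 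I would record once the boundary data that drive every case: $K\to 0$ as $s\to p_0$ (a double pole of $PP$), $K\to+\infty$ as $s$ approaches a real zero of $p''$, and $K\to K_0$ as $s\to\pm\infty$ (since $PP\to (n-1)/n$ at infinity).

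I would treat $\Gamma_1$ (no real zeros of $p''$) first, as the clean case. Here $PP>0$ on all of $\mathbb{R}\setminus\{p_0\}$, so each of the two half-lines emanating from $p_0$ is a $2q\pi$-degree locus on which $K$ runs from $0$ (at $p_0$) toward the horizontal value $K_0$ (at infinity). By the extremal principle established earlier (the standard real breakaway points are exactly the extreme points of the gain), $K$ attains $K_0$ at a finite point precisely when it overshoots $K_0$, that is, when some maximum real breakaway point has gain $\ge K_0$. This trichotomy gives the three subcells: no standard breakaway point ($\Gamma_{11}$, with $K$ monotone and $K_0$ never attained), every maximum breakaway below $K_0$ ($\Gamma_{121}$, still not attained), and some maximum breakaway at least $K_0$ ($\Gamma_{122}$, attained) — exactly Lemmas 3.4, 3.6, 3.7.

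The case $\Gamma_2$ (real zeros of $p''$ present) is where I expect the real difficulty, and it is \emph{the main obstacle}. Now the real axis carries several $2q\pi$-degree intervals: finite intervals between consecutive real zeros of $p''$, together with the right infinite interval $(z_m,+\infty)$ and, when it is of $2q\pi$ degree, the left infinite interval $(-\infty,z_s)$; the parity of the number of real zeros of $p''$ on each side of $p_0$ decides which intervals carry $2q\pi$-degree loci. On each such interval $K$ is continuous and pinned by the boundary data above, so by the same extremal principle the attainment of $K_0$ is governed by comparing the gains at the minimum real breakaway points with $K_0$. The heart of the proof is the bookkeeping that tracks these comparisons simultaneously across the right and left infinite intervals and all finite $2q\pi$-degree intervals, subdivided by the parities defining $\Gamma_{211},\Gamma_{212}$ and $\Gamma_{231},\Gamma_{232}$; carrying this out yields Lemmas 3.22, 3.20, 3.21 for $\Gamma_{21}$, Lemma 3.8 for $\Gamma_{22}$, and Lemmas 3.23, 3.25, 3.26 for $\Gamma_{23}$, each certifying whether $K_0$ is attained. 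The delicate point throughout is to keep the inventory of $2q\pi$-degree intervals complete and to read off the correct limiting value of $K$ at each endpoint, since an overlooked interval or a misidentified boundary limit would flip a cell's classification. I would close by observing that the thirteen cells partition $\Lambda$ disjointly and exhaustively, so the set on which $K_0$ is attained — equivalently on which $\Delta$ has a real zero — is exactly the union of the nine positive cells $\Lambda_1\cup\Lambda_{21}\cup\Lambda_{22}\cup\Gamma_{122}\cup\Gamma_{22}\cup\Gamma_{2122}\cup\Gamma_{211}\cup\Gamma_{231}\cup\Gamma_{2322}$, which is the assertion of Theorem 1.11.
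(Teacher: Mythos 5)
Your proposal follows essentially the same route as the paper: reduce the three coarse cells via the published Theorem 1, translate the remaining question on $\Gamma=\Lambda_{23}$ into whether the gain $K$ attains $K_0=n/(n-1)$ on a real $2q\pi$-degree locus, and settle each of the thirteen cells by comparing the gains at the (minimum/maximum) standard real breakaway points with $K_0$ using the boundary values $K(p_0)=0$, $K=+\infty$ at real zeros of $p''$, and $K_{\pm\infty}=K_0$ — which is exactly how the paper assembles Lemmas 3.4–3.27 into Theorems 1.11 and 1.12. The approach and the cell-by-cell bookkeeping match the paper's own proof.
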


\begin{theorem}
Let $p(x)$ be a real polynomial of even degree $n$. Then, the quantity
$$\sharp_r [(n-1)(p^{'}(x))^{2}-np(x)p^{''}(x)]+\sharp_r p(x)=0$$
if and only if
$$PP \in \Gamma_{11}\cup\Gamma_{121}\cup\Gamma_{2121}\cup\Gamma_{2321}.$$
\end{theorem}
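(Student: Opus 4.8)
The plan is to reduce Theorem 1.12 to a statement purely about $\Gamma$ by invoking the already-established $\mathbf{THEOREM}$ $\mathbf{2}$, and then to settle that statement by combining the ten structural lemmas attached to the subsets of $\Gamma$. Recall that $\mathbf{THEOREM}$ $\mathbf{2}$ asserts that $\sharp_r\Delta+\sharp_r p=0$ precisely when $p(x)$ has no real zeros, $p'(x)$ has a single simple real zero $w$ (so $p'(x)=C(x)(x-w)$ with $C(w)\neq 0$), and the polynomial $(n-1)(C(x))^2(x-w)^2-np(x)C'(x)(x-w)-nC(x)p(x)$ has no real zeros. Substituting $p''(x)=C'(x)(x-w)+C(x)$ into $\Delta=(n-1)(p'(x))^2-np(x)p''(x)$ shows that $\Delta$ is exactly this polynomial, and the first two conditions are exactly the defining conditions of $\Gamma=\Lambda_{23}$. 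Since moreover $\sharp_r p=0$ throughout $\Gamma$, the theorem is equivalent to the assertion that, for $PP\in\Gamma$, one has $\sharp_r\Delta=0$ if and only if $PP\in\Gamma_{11}\cup\Gamma_{121}\cup\Gamma_{2121}\cup\Gamma_{2321}$.

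I would establish this equivalence by pitting the ten subsets of $\Gamma$ against one another. For the ``if'' direction, Lemmas 3.4, 3.6, 3.20 and 3.25 show, for $\Gamma_{11}$, $\Gamma_{121}$, $\Gamma_{2121}$ and $\Gamma_{2321}$ respectively, that no real point of gain $K=K_0$ lies on any $2q\pi$-degree root locus of (1.2); by the correspondence recorded immediately after (1.2), such points are exactly the real zeros of $\Delta$, so $\sharp_r\Delta=0$ on each of these four subsets, whence $\sharp_r\Delta+\sharp_r p=0$. For the ``only if'' direction, the six remaining subsets $\Gamma_{122},\Gamma_{22},\Gamma_{2122},\Gamma_{211},\Gamma_{231},\Gamma_{2322}$ carry the complementary Lemmas 3.7, 3.8, 3.21, 3.22, 3.23 and 3.26, each of which produces at least one real gain-$K_0$ point on a $2q\pi$-degree locus, hence at least one real zero of $\Delta$ and $\sharp_r\Delta>0$. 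Because these ten subsets are pairwise disjoint and exhaust $\Gamma$, and because $\sharp_r\Delta\ge 0$ always, the six ``success'' lemmas force $\sharp_r\Delta>0$ off the four failure subsets; thus $\sharp_r\Delta=0$ can occur only on $\Gamma_{11}\cup\Gamma_{121}\cup\Gamma_{2121}\cup\Gamma_{2321}$. Equivalently, this is precisely the set-theoretic complement, inside the full partition of $\Lambda$, of the nine-subset union appearing in Theorem 1.11, which explains why Theorems 1.11 and 1.12 are logically dual.

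The main obstacle is not this final assembly, which is bookkeeping once the lemmas are in hand, but the verification that the gain-based dichotomies defining $\Gamma_{121}/\Gamma_{122}$, $\Gamma_{2121}/\Gamma_{2122}$ and $\Gamma_{2321}/\Gamma_{2322}$ genuinely partition their parent sets. Each such condition quantifies simultaneously over several interval types --- the right infinite interval $(z_m,+\infty)$, the left infinite interval of $2q\pi$ degree $(-\infty,z_s)$, and all finite intervals of $2q\pi$ degree on either side of $p_0$ --- and I must confirm that the negation of ``every minimum real breakaway point in every such interval has gain $>K_0$'' coincides exactly with ``some such interval contains a minimum real breakaway point of gain $\le K_0$.'' This, together with the facts that each interval of interest really is a $2q\pi$-degree locus and that its standard real breakaway points are exactly the gain extrema, rests on the breakaway-point analysis of Section 2, in particular Theorem 2.23. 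I would therefore spend the bulk of the effort checking that every $PP\in\Gamma$ satisfies one and only one of the defining conditions, so that the four failure subsets are exactly the complement in $\Lambda$ of the nine subsets of Theorem 1.11.
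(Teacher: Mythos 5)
Your proposal is correct and follows essentially the same route as the paper: the paper likewise obtains the "if" direction by combining Lemmas 3.4, 3.6, 3.20 and 3.25 (each giving $\sharp_r\Delta=0$ on its subset), and the "only if" direction via the complementary Lemmas 3.7, 3.8, 3.21, 3.22, 3.23, 3.26 together with the disjoint exhaustive partition of $\Gamma$ and the reduction to $\Gamma=\Lambda_{23}$ through the previously published Theorems 1 and 2. You merely make explicit the bookkeeping (the identification of $\Delta$ with the polynomial in $C(x)$ and the complementation argument) that the paper leaves as a one-sentence assembly.
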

These cases cover all possibilities for even-degree real polynomials.

\section{Proof of Necessary and Sufficient Condition for Real Critical Points}

When the right side of (1.1) equals $1$, the root locus equation (1.1) and its corresponding root loci are both of $2q\pi$ degrees. Conversely, when the right side of (1.1) equals $-1$, the root locus equation (1.1) and its corresponding root loci are both of $2q\pi+\pi$ degrees, where $q=0, \pm 1,\pm 2,\cdots$ is an integer. At points where the argument of $RF(s)$ is $2q\pi$ or $2q\pi+\pi$, $RF(s)$ takes real values.

The results of the root loci of (1.1) are well-established in the textbooks on automatic control theory. While some textbooks provide rigorous and mathematically sound proofs (e.g., \cite{Y1,H1}), others do not meet the same standard (e.g., \cite{Franklin1,Dorf1}). Nevertheless, the root locus method has been widely used for over 70 years and has been included in textbooks for decades, making it a reliable and trusted tool. For this reason, we do not reproduce their proofs here.

When the right side of (1.1) equals $1$, in the complex plane, the roots of (1.1)  form curves known as root loci of $2q\pi$ degree. The root loci of $2q\pi$ degree may intersect. When the right side of (1.1) equals $-1$, in the complex plane, the roots of (1.1)  form curves known as root loci of $2q\pi+\pi$ degree. The root loci of $2q\pi+\pi$ degree may intersect. However, a root locus of $2q\pi+\pi$ degree does not intersect a root locus of $2q\pi$ degree.

$\mathbb{C}\cup\{\infty\}$ denotes the extended complex plane. Let $\Phi\subset\mathbb{C}\cup\{\infty\}$ denote the set of points $s=\sigma+it\in\mathbb{C}\cup\{\infty\}$ that are neither zeros nor poles of (1.1). $s=\sigma+it\in\Phi$ is an arbitrary point in $\Phi$.
$\mathbb{C}^{*}=\{s=x+iy: y\neq 0\}$ denotes the extended complex plane excluding the real axis. $\sum^n_{j=1} \beta_j$ denotes the total number of poles of (1.1). $\sum^m_{l=1} \gamma_l$ denotes the total number of zeros of (1.1). (counting multiplicity)

The root locus in $\mathbb{C}\cup\{\infty\}$ has several key properties. These properties govern its distribution patterns, which in turn provide insight into both the shape of the root locus and other results related to $RF(s)$.
\begin{lemma}
The $2q\pi+\pi$  and $2q\pi$ degree root loci of (1.1) begin at the poles of (1.1) or at infinity and end at the zeros of (1.1) or at infinity.

1. When $\sum^n_{j=1} \beta_j > \sum^m_{l=1} \gamma_l$, there are $\sum^n_{j=1} \beta_j- \sum^m_{l=1} \gamma_l$ branches of the $2q\pi+\pi$  and $2q\pi$ degree root loci ending at the infinity in $\mathbb{C}\cup\{\infty\}$.

2. When $\sum^n_{j=1} \beta_j < \sum^m_{l=1} \gamma_l$, there are $ \sum^m_{l=1} \gamma_l - \sum^n_{j=1} \beta_j$ branches of the $2q\pi+\pi$  and $2q\pi$ degree root loci beginning at the infinity in $\mathbb{C}\cup\{\infty\}$.
\end{lemma}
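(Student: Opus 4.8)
The plan is to reduce the geometric statement about root loci to an elementary analysis of the roots of a one-parameter family of polynomials, and then to read off the endpoints and the branches escaping to infinity from a degree comparison. Write $N(s)=\prod_{l=1}^m (s-z_l)^{\gamma_l}$ and $D(s)=\prod_{j=1}^n (s-p_j)^{\beta_j}$, so that $\deg N = Z := \sum_{l=1}^m \gamma_l$ and $\deg D = P := \sum_{j=1}^n \beta_j$. Clearing denominators in (1.1) turns the root locus equation into the polynomial identity
\[
 g_K(s) := K\,N(s)\mp D(s)=0,
\]
where the upper sign gives the $2q\pi$-degree loci and the lower sign the $2q\pi+\pi$-degree loci, and $K\in[0,\infty)$ is the gain. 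For each fixed $K$ the solution set of $g_K(s)=0$ is finite, and by the continuous dependence of the roots of a polynomial on its coefficients, as $K$ sweeps $[0,\infty)$ these solutions trace out continuous curves in $\mathbb{C}\cup\{\infty\}$; these are the branches of the root locus. Adopting the standard orientation in which $K$ increases from $0$ to $\infty$, a branch \emph{begins} at its $K=0$ position and \emph{ends} at its $K\to\infty$ position.

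First I would identify the two families of endpoints. At $K=0$ one has $g_0(s)=\mp D(s)$, whose roots are exactly the poles $p_j$ of (1.1) with their multiplicities $\beta_j$; hence every branch that starts at a finite point starts at a pole. For the terminal behaviour, divide by $K$ and write $g_K(s)/K = N(s)\mp D(s)/K$; as $K\to\infty$ this tends coefficientwise to $N(s)$, so every branch that ends at a finite point converges to a zero $z_l$ with multiplicity $\gamma_l$. This already yields the qualitative assertion that the loci begin at poles or at infinity and end at zeros or at infinity, and it holds verbatim for both choices of sign, since neither the degree nor the leading coefficient of $g_K$ is affected by the sign at this step.

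Next I would count the branches at infinity by comparing $\deg g_K$ at the two ends of the parameter range. When $P>Z$ (statement 1), $\deg g_K = P$ for every $K\ge 0$, because the leading term is supplied by $D$; thus there are always $P$ finite roots, but as $K\to\infty$ only $Z$ of them converge to the zeros of $N$. The remaining $P-Z$ roots must leave every compact set: the leading-order balance $K\,s^{Z}\sim s^{P}$ in $g_K(s)=0$ gives $\abs{s}\sim K^{1/(P-Z)}\to\infty$, so exactly $P-Z$ branches end at the point $\infty$ of $\mathbb{C}\cup\{\infty\}$. When $P<Z$ (statement 2), $\deg g_K = Z$ for every $K>0$, but at $K=0$ the degree drops to $\deg D = P$; hence $Z-P$ roots escape to infinity as $K\to 0^+$, with $\abs{s}\sim K^{-1/(Z-P)}\to\infty$ from the same balance, so exactly $Z-P$ branches begin at $\infty$. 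In both cases the count is identical for the $+$ and $-$ signs, so the statement holds for the $2q\pi$ and $2q\pi+\pi$ loci simultaneously.

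The step I expect to be the main obstacle is making the count of escaping roots fully rigorous: one must show that as $K\to\infty$ (resp. $K\to 0^+$) \emph{exactly} $\abs{P-Z}$ roots have modulus tending to $+\infty$ while the remaining ones stay bounded and converge to the zeros (resp. poles), and that all escaping roots approach the single compactification point $\infty$. I would secure the bounded count by a Rouch\'e argument: on a sufficiently small circle about each zero $z_l$ (resp. pole $p_j$) the term $KN$ (resp. $D$) dominates for $K$ large (resp. small), so $g_K$ has exactly $\gamma_l$ (resp. $\beta_j$) roots there, giving $Z$ (resp. $P$) bounded roots in total; subtracting from $\deg g_K$ then forces precisely $\abs{P-Z}$ roots to leave every compact set, and their common limit is $\infty$. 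The real coefficients of $N$ and $D$ make the branches conjugate-symmetric, which is consistent with the integer count though not needed for it. Continuity of the branches themselves follows from the classical fact that the multiset of roots of a monic polynomial depends continuously on its coefficients, applied to $g_K$ normalized by its leading coefficient over any range of $K$ on which $\deg g_K$ is constant.
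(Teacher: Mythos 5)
Your proposal is correct, but there is nothing in the paper to compare it against: the paper explicitly declines to prove this lemma, stating that Lemmas 2.1--2.4 are ``adapted from textbooks on automatic control theory'' and that ``we do not reproduce their proofs here.'' What you have written is, in effect, the standard textbook argument made precise: clear denominators to get $g_K(s)=K\,N(s)\mp D(s)$, use continuity of the root multiset in the coefficients to obtain branches parametrized by $K\in[0,\infty)$, identify the $K\to 0^+$ limits with the poles and the $K\to\infty$ limits with the zeros, and count the escaping branches by the degree comparison together with a Rouch\'e localization of the bounded roots. The Rouch\'e step is exactly the right way to make the count of $\abs{P-Z}$ escaping roots rigorous, and the leading-order balance $K s^{Z}\sim s^{P}$ correctly identifies the rate of escape. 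Two minor points worth tightening if this were to be written out in full: at $K=0$ the equation $K\cdot RF(s)=\pm 1$ has no actual solution, so the poles should be described as limits of branches as $K\to 0^{+}$ (which is also how the paper's Lemma 2.5 treats them); and continuity of the root multiset gives well-defined continuous branches only away from parameter values where roots collide (the breakaway points), though for the endpoint and counting claims of this lemma the multiset version suffices, as you note. Your argument supplies a self-contained proof of a statement the paper takes on faith, which strengthens rather than conflicts with the paper's development.
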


\begin{lemma}
A branch of the root loci is an entire root locus from the starting point that extends at the ending point(including infinity).

1. When $\sum^n_{j=1} \beta_j \ge \sum^m_{l=1} \gamma_l$, there are $\sum^n_{j=1} \beta_j$ branches of the $2q\pi+\pi$  and $2q\pi$ degree root loci. The $\sum^n_{j=1} \beta_j$ branches of the $2q\pi+\pi$  and $2q\pi$ degree root loci are symmetrical with respect to the real axis.

2. When $\sum^n_{j=1} \beta_j < \sum^m_{l=1} \gamma_l$, there are $\sum^m_{l=1} \gamma_l$ branches of the $2q\pi+\pi$  and $2q\pi$ degree root loci. The $\sum^m_{l=1} \gamma_l$ branches of the $2q\pi+\pi$  and $2q\pi$ degree root loci are symmetrical with respect to the real axis.
\end{lemma}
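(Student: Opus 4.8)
The plan is to parametrize each of the two families of loci by the gain $K$, reduce the locus condition to a one-parameter family of polynomial equations in $s$, and count branches by counting roots. First I would recast equation (1.1). Writing $N(s)=\prod_{i=1}^{m}(s-z_{l})^{\gamma_l}$ and $D(s)=\prod_{j=1}^{n}(s-p_{j})^{\beta_j}$, the condition $K\,RF(s)=+1$ says $RF(s)=1/K>0$, i.e.\ $\arg RF(s)\equiv 0\pmod{2\pi}$, so the $2q\pi$ degree locus is the solution set of $D(s)-K N(s)=0$; likewise $K\,RF(s)=-1$ gives $\arg RF(s)\equiv\pi$, so the $2q\pi+\pi$ degree locus is the solution set of $D(s)+K N(s)=0$. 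Here $\deg D=\sum_{j=1}^{n}\beta_j$ and $\deg N=\sum_{l=1}^{m}\gamma_l$. For each fixed $K>0$ this is a polynomial equation in $s$ whose number of roots (with multiplicity) is its degree, and since the roots of a polynomial vary continuously with its coefficients, as $K$ ranges over $(0,\infty)$ these roots move continuously and sweep out the branches. Thus the number of branches of each family equals the degree of $D\mp KN$ for generic $K$.

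Next I would evaluate that degree in the two regimes. When $\sum_{j=1}^{n}\beta_j>\sum_{l=1}^{m}\gamma_l$ the leading term of $D\mp KN$ comes from $D$ for every $K$, so $\deg(D\mp KN)=\sum_{j=1}^{n}\beta_j$ and there are $\sum_{j=1}^{n}\beta_j$ branches; by Lemma 2.1 exactly $\sum_{j=1}^{n}\beta_j-\sum_{l=1}^{m}\gamma_l$ of them run off to infinity as $K\to\infty$. When $\sum_{j=1}^{n}\beta_j<\sum_{l=1}^{m}\gamma_l$, for $K>0$ the term $KN$ dominates, so $\deg(D\mp KN)=\sum_{l=1}^{m}\gamma_l$ and there are $\sum_{l=1}^{m}\gamma_l$ branches, of which $\sum_{l=1}^{m}\gamma_l-\sum_{j=1}^{n}\beta_j$ issue from infinity as $K\to 0^+$, again matching Lemma 2.1. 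The equality case $\sum_{j=1}^{n}\beta_j=\sum_{l=1}^{m}\gamma_l$ falls under regime~1, the common value being the branch count. That each branch is a single entire locus from its starting point to its ending point (including $\infty$) then follows from the continuity of the root trajectories together with Lemma 2.1, which fixes the $K\to 0$ endpoints at the poles and the $K\to\infty$ endpoints at the zeros or at infinity.

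For the symmetry claim I would use that $p$ has real coefficients, so the zeros $z_l$ and poles $p_j$ are real or occur in conjugate pairs; hence $RF(\bar s)=\overline{RF(s)}$ and $\arg RF(\bar s)=-\arg RF(s)$. If $\arg RF(s)\equiv 0\pmod{2\pi}$ then $\arg RF(\bar s)\equiv 0$, and if $\arg RF(s)\equiv\pi$ then $\arg RF(\bar s)\equiv\pi$, so complex conjugation maps each family onto itself. Since $s\mapsto\bar s$ is a homeomorphism of $\mathbb{C}\cup\{\infty\}$ fixing the real axis pointwise, the set of $2q\pi$ (resp.\ $2q\pi+\pi$) branches is invariant under it: real branches are self-conjugate and non-real branches occur in conjugate pairs, which is precisely symmetry with respect to the real axis.

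The hard part will be the boundary regime $\sum_{j=1}^{n}\beta_j=\sum_{l=1}^{m}\gamma_l$, which is exactly the case relevant to $PP=\tfrac{p''p}{(p')^2}$, where $\deg(p''p)=\deg((p')^2)=2n-2$. There the leading coefficient of $D\mp KN$ is $c_D\mp K\,c_N$, which can vanish at one positive value of $K$, making the degree drop and one root escape to $\infty$. I would resolve this by working throughout on the extended plane $\mathbb{C}\cup\{\infty\}$, so that the escaping root is simply the point at infinity lying on a branch that passes through $\infty$, and the total count $\sum_{j=1}^{n}\beta_j$ is preserved. A secondary point needing care is that distinct branches may meet at breakaway (critical) points; since these are transverse crossings of root trajectories rather than creations or annihilations of roots, they do not alter the branch count, but this must be justified from the continuity and properness of the root map on $\mathbb{C}\cup\{\infty\}$ (and, if $N$ and $D$ are not assumed coprime, by first removing their common factor).
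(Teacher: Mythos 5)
The paper does not actually prove this statement: it is Lemma 2.2, one of the four results (Lemmas 2.1--2.4) that the authors explicitly import from control-theory textbooks, stating that they ``do not reproduce their proofs here.'' Your proposal is therefore not competing with an in-paper argument but supplying the standard one, and it is essentially correct. Identifying the $2q\pi$ and $2q\pi+\pi$ loci with the root sets of $D(s)-KN(s)$ and $D(s)+KN(s)$, counting roots by degree for each fixed $K>0$, invoking continuity of roots in the coefficients to sweep out branches, and using $\overline{RF(s)}=RF(\bar s)$ for the real-axis symmetry is exactly the textbook derivation. The two caveats you raise are the right ones to worry about. First, the equal-degree case is not a marginal boundary regime here but the case the paper actually uses: for (1.2) one has $\deg(p''p)=\deg((p')^2)=2n-2$, and the leading coefficient $c_D\mp Kc_N$ vanishes precisely at $K=n/(n-1)=K_0$, which is why the paper must treat $\pm\infty$ as roots of $\Delta$ (Lemma 3.2); working on $\mathbb{C}\cup\{\infty\}$ so that the escaping root sits at $\infty$ on its branch is the correct repair and keeps the count at $\sum^n_{j=1}\beta_j$. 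Second, at a breakaway point the continuation of an individual trajectory is not canonical (the roots genuinely collide rather than cross transversally in general), so ``branch'' is best defined as the trace of one root of the degree-$\max\bigl(\sum^n_{j=1}\beta_j,\sum^m_{l=1}\gamma_l\bigr)$ polynomial as $K$ runs over $(0,\infty)$; under that convention the branch count is immediate and the ambiguity at collisions does not affect it. With those two points made explicit, your argument is a complete proof of a statement the paper only cites.
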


\begin{lemma}
The real axis is the $2q\pi+\pi$ and $2q\pi$ degree root loci of (1.1).

1. The necessary and sufficient condition that an interval on the real axis must be a $2q\pi+\pi$ degree root locus is that the total number of real poles and zeros of (1.1) on the right side of this interval is an odd number.

2. The necessary and sufficient condition that an interval on the real axis must be a $2q\pi$ degree root locus is that the total number of real poles and zeros of (1.1) on the right side of this interval is an even number.
\end{lemma}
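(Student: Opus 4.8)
The plan is to reduce the claim to a direct computation of the argument of the rational function $RF(s)=\prod_{i}(s-z_l)^{\gamma_l}/\prod_{j}(s-p_j)^{\beta_j}$ evaluated at a real point. First I would observe that a real point $\sigma$ which is neither a pole nor a zero of (1.1) always lies on some root locus: since $RF$ has real coefficients, $RF(\sigma)$ is a nonzero real number, so choosing the gain $K=\abs{1/RF(\sigma)}$ makes $K\,RF(\sigma)=+1$ when $RF(\sigma)>0$ and $K\,RF(\sigma)=-1$ when $RF(\sigma)<0$. Thus $\sigma$ lies on a $2q\pi$-degree locus precisely when $RF(\sigma)>0$ and on a $2q\pi+\pi$-degree locus precisely when $RF(\sigma)<0$; equivalently, the type is governed by $\arg RF(\sigma)\bmod 2\pi$. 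This establishes the opening assertion that the entire real axis is covered by the two families of loci.

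Next I would expand the argument additively as $\arg RF(\sigma)=\sum_i \gamma_l\arg(\sigma-z_l)-\sum_j \beta_j\arg(\sigma-p_j)$ and evaluate each term for real $\sigma$. The key simplification is that non-real zeros and poles occur in conjugate pairs $z,\bar z$, and for real $\sigma$ the factors $(\sigma-z)$ and $(\sigma-\bar z)$ are complex conjugates, so their arguments cancel; hence non-real critical points contribute $0$ to $\arg RF(\sigma)$ and may be ignored. For a real zero or pole, the factor $(\sigma-\,\cdot\,)$ is itself real: it contributes argument $0$ when the zero/pole lies to the left of $\sigma$ (the factor is positive) and argument $\pm\pi$ times its multiplicity when it lies to the right (the factor is negative).

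The decisive step is the reduction modulo $2\pi$. Writing $N$ for the total number, counted with multiplicity, of real poles and zeros lying to the right of $\sigma$, the pole terms carry a minus sign, but since $-\beta_j\pi\equiv\beta_j\pi\pmod{2\pi}$ the sign is immaterial and real poles and real zeros contribute identically to the parity. Collecting terms yields $\arg RF(\sigma)\equiv N\pi\pmod{2\pi}$. Consequently $\arg RF(\sigma)\equiv 0$ exactly when $N$ is even, giving a $2q\pi$-degree locus, and $\arg RF(\sigma)\equiv\pi$ exactly when $N$ is odd, giving a $2q\pi+\pi$-degree locus. Since $N$ changes only as $\sigma$ crosses a real pole or zero, $\arg RF(\sigma)$ is constant on each maximal open interval of the real axis between consecutive real poles/zeros, so the classification is well-defined per interval and both implications (necessity and sufficiency) follow simultaneously.

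I expect the only genuinely delicate point to be bookkeeping the modular arithmetic cleanly, specifically justifying that poles (which enter with a negative sign and through the denominator) nonetheless contribute to the parity count on equal footing with zeros; this rests entirely on the congruence $-k\pi\equiv k\pi\pmod{2\pi}$. Everything else is a routine phase count in the spirit of the argument principle, and the conjugate-pair cancellation is immediate from the reality of the coefficients.
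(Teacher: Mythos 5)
Your proposal is correct, but it cannot be compared against a proof in the paper because the paper gives none: this is one of the four lemmas (the block Lemmas 2.1--2.4) that the authors explicitly state are ``adapted from textbooks on automatic control theory,'' with the remark that they ``do not reproduce their proofs here.'' What you have written is precisely the standard textbook justification of the real-axis rule, and it is sound. A marginally more elementary packaging of the same idea is to work with signs rather than arguments: for real $\sigma$ that is neither a zero nor a pole, $RF(\sigma)$ is a nonzero real number; conjugate pairs of non-real zeros and poles contribute positive factors $\lvert\sigma-z\rvert^{2\gamma}$, real zeros and poles to the left contribute positive factors, and each real zero or pole to the right contributes a sign $(-1)^{\gamma_l}$ or $(-1)^{-\beta_j}=(-1)^{\beta_j}$, so $\operatorname{sgn}RF(\sigma)=(-1)^{N}$ with $N$ the multiplicity-counted total to the right; the gain $K=\lvert 1/RF(\sigma)\rvert$ then satisfies $K\,RF(\sigma)=\pm1$ according to that sign. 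This is exactly your congruence $-k\pi\equiv k\pi\pmod{2\pi}$ in disguise, so the ``delicate point'' you flag is genuinely harmless. Two small points of care: the count $N$ must be taken with multiplicity (your phrasing does this, and it matches the paper's convention of counting $\gamma_l$ and $\beta_j$), and your observation that $N$ is constant on each maximal interval between consecutive real zeros and poles is what upgrades the pointwise statement to the interval statement in the lemma. In short, your argument supplies a proof the paper omits, and it is the right one.
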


\begin{lemma}
The breakaway points of the root loci of (1.1) satisfy (2.1).

\begin{equation}
\frac{dK(s)}{ds}\mid_{s=s_0}=0.
\end{equation}
\end{lemma}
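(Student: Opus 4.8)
The plan is to reduce the geometric condition ``two or more root loci meet at $s_0$'' to the algebraic condition ``$s_0$ is a multiple root of the characteristic polynomial at the corresponding gain,'' and then to verify by a short computation that this multiplicity condition is equivalent to $K'(s_0)=0$. Write $D(s)=\prod_{j}(s-p_j)^{\beta_j}$ and $N(s)=\prod_{l}(s-z_l)^{\gamma_l}$, so that after clearing denominators the root locus equation (1.1) becomes the characteristic equation $\Psi(s,K):=D(s)\pm K N(s)=0$, with the sign $+$ on the $2q\pi+\pi$ loci (where $K\,RF=-1$) and the sign $-$ on the $2q\pi$ loci (where $K\,RF=1$). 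On any fixed locus the sign is constant, and solving $\Psi=0$ for the gain gives the single-valued meromorphic function $K(s)=\mp D(s)/N(s)$, which is exactly the $K$ appearing in (2.1). The decisive structural fact is that $K$ is a genuine function of position $s$, so every locus passing through a given point assigns that point one and the same gain.

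First I would record that, by the established root-locus theory together with Lemmas 2.1--2.3, the branches of the root locus are precisely the continuous root-curves of $\Psi(\cdot,K)=0$ traced out as the parameter $K$ varies, their number equaling the degree of $\Psi$. If two such branches cross at $s_0$, they necessarily meet there at the common gain $K_0=K(s_0)$, so two distinct continuous root-functions coincide at $K=K_0$; hence $s_0$ is a root of the polynomial $\Psi(\cdot,K_0)$ of multiplicity at least two. This forces simultaneously
\[
\Psi(s_0,K_0)=0 \qquad\text{and}\qquad \partial_s\Psi(s_0,K_0)=D'(s_0)\pm K_0 N'(s_0)=0.
\]

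Next I would differentiate. From $K(s)=\mp D(s)/N(s)$,
\[
K'(s)=\mp\frac{D'(s)N(s)-D(s)N'(s)}{N(s)^2}.
\]
At $s_0$ the relation $\Psi(s_0,K_0)=0$ gives $D(s_0)=\mp K_0 N(s_0)$; substituting this into the numerator yields $D'(s_0)N(s_0)-D(s_0)N'(s_0)=N(s_0)\big(D'(s_0)\pm K_0 N'(s_0)\big)=N(s_0)\,\partial_s\Psi(s_0,K_0)=0$. Since a breakaway point lies in $\Phi$ (it is neither a zero nor a pole of (1.1), so $N(s_0)\neq0$), the factor $N(s_0)$ cancels and we conclude $K'(s_0)=0$, which is (2.1). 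Equivalently, this expresses that $s_0$ is a critical point of $RF$: because $K=\mp 1/RF$ gives $K'=\pm RF'/RF^{2}$, the vanishing of $K'$ away from zeros and poles is the same as $RF'(s_0)=0$, matching the geometric picture in which the level curves $\arg RF\equiv\text{const}$ (the root loci) cross exactly at the critical points of the analytic function $RF$.

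The main obstacle is the rigorous justification of the reduction in the second step, namely that an intersection of two loci genuinely produces a \emph{repeated} root of $\Psi(\cdot,K_0)$ rather than two simple roots that merely appear to coincide. The clean resolution is the single-valuedness of $K$ as a function of $s$: the two crossing branches must share the gain $K_0=K(s_0)$, so $s_0$ is attained by two distinct branches at one and the same parameter value, which is precisely a collision of two roots of $\Psi(\cdot,K_0)$ and hence a root of multiplicity $\ge 2$. Once this identification is secured, the remainder is the short rational-function computation above, where the only point requiring care is the observation that $N(s_0)\neq0$ at a breakaway point, permitting the cancellation.
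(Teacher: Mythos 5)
Your proof is correct. Note first that the paper does not actually prove this lemma: it is one of the four statements (Lemmas 2.1--2.4) explicitly imported from control-theory textbooks, with the authors declining to reproduce the proofs. The closest the paper comes to an argument is its later, original chain Lemma 2.14 -- Lemma 2.18 (culminating in Theorem 2.19), whose necessity direction (Lemma 2.18) is essentially the statement at hand. That proof runs through the same underlying idea as yours --- a breakaway point is a multiple root of the characteristic equation, and multiplicity of the root forces the derivative of the gain to vanish --- but does so via the auxiliary factorization $(s-s_0)^{\gamma_0}g(s)$ with $\gamma_0>1$ and the quantities $TT1(s)$, $TT2(s)$ of Lemma 2.15. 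Your version replaces that machinery with a direct differentiation of $\Psi(s,K)=D(s)\pm K N(s)$ and the identity $D'N-DN'=N\,\partial_s\Psi$ at the common gain $K_0$, which is shorter and makes the role of $N(s_0)\neq 0$ transparent. Both arguments rest on the same pivotal reduction, namely that a genuine crossing of two loci yields a repeated root of $\Psi(\cdot,K_0)$; the paper simply asserts this in Lemma 2.18 (``$s_0$ is a multiple root of the characteristic equation''), whereas you at least flag it as the main obstacle and justify it by the single-valuedness of $K(s)$, together with the fact (the paper's Theorem 2.9) that only loci of the same phase, hence roots of the same polynomial $\Psi(\cdot,K_0)$, can meet. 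Your closing remark identifying the condition with $RF'(s_0)=0$ also matches the paper's Theorem 2.23. In short: correct, same essential route as the paper's own later proof of the equivalent statement, but cleaner in execution.
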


Lemmas 1.1, 1.2, 1.3 and 1.4 are adapted from textbooks on automatic control theory \cite{Y1,H1,Franklin1,Dorf1}. The remaining results are original discoveries and proofs by the authors.

\begin{lemma}
After removing coincident poles and zeros in (1.1):

1. All finite poles $p_j$ of $RF$ are also finite poles of (1.1) with $K=0$.

2. All finite zeros $z_l$ of $RF$ are also finite zeros of (1.1) with $K=+\infty$.
\end{lemma}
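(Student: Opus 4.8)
The plan is to reduce the statement to a direct evaluation of the gain function and, in parallel, to a continuity argument on the characteristic polynomial of (1.1). Write $N(s)=\prod_{i=1}^{m}(s-z_{l})^{\gamma_l}$ and $D(s)=\prod_{j=1}^{n}(s-p_{j})^{\beta_j}$, so that $RF(s)=N(s)/D(s)$ and, by the definition following (1.1), the gain is $K(s)=\left|D(s)/N(s)\right|$. The hypothesis that coincident poles and zeros have been removed means precisely that $\gcd(N,D)=1$, hence no $p_j$ equals any $z_l$; this guarantees $N(p_j)\neq 0$ for every pole and $D(z_l)\neq 0$ for every zero, which is exactly what prevents the indeterminate form $0/0$ in the two evaluations below.

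For part 1, I would evaluate the gain at a pole. At $s=p_j$ we have $D(p_j)=0$ while $N(p_j)\neq 0$, so $K(p_j)=\left|D(p_j)/N(p_j)\right|=0$; conversely $K(s)=0$ forces $D(s)=0$, i.e.\ $s\in\{p_j\}$, so the finite points carrying $K=0$ are exactly the poles of $RF$. To match the root-locus language of Lemma 1.1 (branches \emph{begin} at the poles), I would view (1.1) in characteristic form $D(s)\pm K\,N(s)=0$: at $K=0$ this polynomial reduces to $D(s)$, whose roots are the $p_j$, so every branch issues from a pole as $K\to 0^{+}$. Symmetrically, for part 2, at $s=z_l$ we have $N(z_l)=0$ and $D(z_l)\neq 0$, whence $K(z_l)=\left|D(z_l)/N(z_l)\right|=+\infty$, and conversely $K(s)=+\infty$ forces $N(s)=0$; dividing the characteristic equation by $K$ and letting $K\to+\infty$ turns it into $N(s)=0$, whose roots are the $z_l$, so the finite branches terminate at the zeros.

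The only point requiring care, and where I expect to spend the most effort, is the passage from the pointwise values $K(p_j)=0$ and $K(z_l)=+\infty$ to the assertion that these points genuinely lie on the root loci as their starting and ending points, rather than merely carrying the correct gain value. I would settle this by invoking continuity of the roots of $D(s)\pm K\,N(s)$ as functions of the parameter $K$: as $K\to 0^{+}$ each root converges to a root of $D$, and as $K\to+\infty$ the $\deg N$ bounded roots converge to the roots of $N$ while the remaining $\deg D-\deg N$ roots escape to infinity, consistent with the infinite branches counted in Lemma 1.1. The multiplicities $\beta_j$ and $\gamma_l$ are accounted for by noting that a root of $D$ of multiplicity $\beta_j$ contributes $\beta_j$ coincident starting points, so no branch is lost, and likewise for the $\gamma_l$ at the terminal zeros.
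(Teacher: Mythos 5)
Your proof is correct and follows essentially the same route as the paper's: both clear the denominator of (1.1) into characteristic form and observe that $K=0$ reduces the equation to the pole polynomial while $K=+\infty$ reduces it to the zero polynomial, with the removal of coincident poles and zeros ruling out the indeterminate $0/0$ evaluation. Your added continuity-of-roots remark about branches genuinely beginning and ending at these points goes slightly beyond what the paper proves here (it defers that to Lemma 2.1, quoted from the control-theory literature), but it does not change the substance of the argument.
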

\begin{proof}
1. Rewriting (1.1) as:
$$K\prod^{m}_{l=1} (s-z_l)^{\gamma_l}=(a+ib)\prod^{n}_{j=1}  (s-p_j)^{\beta_j}.$$ All finite poles of $RF$ correspond to zeros of $\prod^{n}_{j=1}  (s-p_j)^{\beta_j}$. If $K=0$, all finite poles of $RF$ are roots of this equation. Conversely, all finite roots of this equation when $K=0$ are finite poles of $RF$. Thus, at finite poles of (1.1), at finite poles of the $RF$, $K=0$.

2. Similarly, equation (1.1) can be expressed as
$$\prod^{m}_{l=1} (s-z_l)^{\gamma_l}=\frac{(a+ib)}{K}\prod^{n}_{j=1}  (s-p_j)^{\beta_j}.$$
If $K=+\infty$, all finite zeros of $RF$ are roots of the equation. Conversely, all roots of the equation when $K=+\infty$ are finite zeros of $RF$. Therefore, at finite zeros of (1.1), at finite zeros of the $RF$, $K=+\infty$.
\end{proof}

The continuity of $K$ with respect to $s\in\mathbb{C}\cup\{\infty\}$ follows directly from the expression of $K$. Applying Lemma 2.1 and Lemma 2.5, Theorem 2.6 is immediate.
\begin{theorem}
For any $s\in\mathbb{C}\cup\{\infty\}$, the value $K$ takes all non-negative real numbers, ranging from 0 at poles of (1.1) to $+\infty$ at zeros of (1.1).
\end{theorem}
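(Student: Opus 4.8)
The plan is to recognize $K$ as a single continuous map $K\colon\mathbb{C}\cup\{\infty\}\to[0,+\infty]$ and then recover its range from its two extreme values. Writing
$K(s)=\bigl|\,\prod_{j=1}^{n}(s-p_j)^{\beta_j}\,\bigr|\big/\bigl|\,\prod_{l=1}^{m}(s-z_l)^{\gamma_l}\,\bigr|=\abs{1/RF(s)}$, I note that $K$ is the modulus of a rational function, hence it extends to a continuous map from the compact and connected extended plane $\mathbb{C}\cup\{\infty\}$ into the extended nonnegative reals $[0,+\infty]$; this is precisely the continuity stated just before the theorem, and it follows at once from the closed-form expression for $K$.

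Next I would identify the two boundary values. By Lemma 2.5, after cancelling any coincident factors, $K=0$ at every finite pole $p_j$ of $RF$ and $K=+\infty$ at every finite zero $z_l$, so both $0$ and $+\infty$ lie in the image of $K$. Since $\mathbb{C}\cup\{\infty\}$ is connected and $K$ is continuous, $K(\mathbb{C}\cup\{\infty\})$ is a connected subset of $[0,+\infty]$; containing both endpoints, it can only be the whole interval $[0,+\infty]$, which is exactly the assertion that $K$ attains every nonnegative real value. Equivalently --- and this is the route that uses Lemma 2.1 directly --- I can restrict to a single branch of the root loci: by Lemma 2.1 such a branch is a connected arc running from a pole, where $K=0$, to a zero, where $K=+\infty$, so the intermediate value theorem applied to the continuous restriction of $K$ to that arc already produces every value in $[0,+\infty]$.

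I do not expect a serious obstacle; the only point that needs care, and the reason Lemma 2.1 is invoked alongside Lemma 2.5, is the degenerate situation in which $RF$ has no finite zeros (numerator constant) or no finite poles (denominator constant). Then the endpoint $+\infty$ or $0$ is not delivered by a finite zero or pole; but in these cases $\sum_{l=1}^{m}\gamma_l=0<\sum_{j=1}^{n}\beta_j$ (respectively $\sum_{j=1}^{n}\beta_j=0<\sum_{l=1}^{m}\gamma_l$), and Lemma 2.1 guarantees that the root loci then end at $s=\infty$ (respectively begin at $s=\infty$). The point at infinity therefore plays the role of the missing zero or pole, with $K(\infty)=+\infty$ (respectively $K(\infty)=0$), and including it restores both endpoints to the image so that the connectedness argument goes through verbatim. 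With continuity and these boundary values in hand, the conclusion of Theorem 2.6 is immediate, the remaining work being only the routine verification that the extended-plane value of $K$ at infinity is read off correctly in the degree-unequal cases.
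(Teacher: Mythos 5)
Your argument is exactly the one the paper intends: the paper gives no written proof, stating only that Theorem 2.6 is ``immediate'' from the continuity of $K$ together with Lemma 2.1 and Lemma 2.5, and your reconstruction (continuity of $\abs{1/RF}$ on the connected extended plane, the boundary values $0$ and $+\infty$ from Lemma 2.5, and Lemma 2.1 supplying the point at infinity as the missing endpoint in the degree-unequal cases) fills in precisely those steps. Your write-up is correct and, if anything, more careful than the paper's one-line justification.
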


\begin{lemma}
Assume that any two points $s_1$ and $s_2$ are on an arbitrary root locus of (1.1). Then $K(s_1)\neq K(s_2)$.  $s_1,s_2\in\Phi$.
\end{lemma}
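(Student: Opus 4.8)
The plan is to reduce the assertion to the strict monotonicity of the gain $K$ along a single root locus and then to derive that monotonicity from the conformality of $\log RF$. First I would record that the hypothesis that $s_1$ and $s_2$ lie on one and the same root locus of (1.1) means that the continuous argument of $RF$ takes a single constant value $\theta_0\in\{2q\pi,\,2q\pi+\pi\}$ at both points, so that $RF(s_1)$ and $RF(s_2)$ are real of the same sign. Since by definition $K(s)=|RF(s)|^{-1}$, the supposition $K(s_1)=K(s_2)$ would give $|RF(s_1)|=|RF(s_2)|$ and hence, together with the equal arguments, $RF(s_1)=RF(s_2)$. Thus it suffices to prove that $RF$ is injective on any one root locus; equivalently, that $\log|RF|$ (and therefore $K$) is strictly monotone along it, which is also consistent with Theorem 2.6, where $K$ runs from $0$ at the initial pole to $+\infty$ at the terminal zero of the branch.

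To obtain the monotonicity I would pass to the locally single-valued holomorphic function $g(s)=\log RF(s)=u(s)+iv(s)$ on $\Phi$, where $u=\log|RF|$ and $v=\arg RF$, continuing the chosen branch of the logarithm along the curve. A root locus is precisely a level curve $\{v=\theta_0\}$ of the harmonic function $v$. By the Cauchy--Riemann equations one has $\nabla u\perp\nabla v$ and $|\nabla u|=|\nabla v|=|g'|$. Hence at every point where $g'\neq 0$ the gradient $\nabla u$ is tangent to the level curve of $v$, so parametrizing the root locus by arc length $\tau$ gives $\frac{d}{d\tau}\,u(s(\tau))=\pm|\nabla u|\neq 0$. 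Consequently $u=\log|RF|$ is strictly monotone along the locus, $K=e^{-u}$ is strictly monotone as well, and in particular $K(s_1)\neq K(s_2)$ whenever $s_1\neq s_2$.

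The step demanding care is the possible vanishing of $g'$, that is, the critical points with $RF'(s_0)=0$, which are exactly the breakaway points. Near such a point $g(s)-g(s_0)=b(s-s_0)^k+\cdots$ with $k\ge 2$, so $2k$ rays of the level set $\{v=v(s_0)\}$ emanate from $s_0$; several distinct root loci cross there, matching Definition 1.1. Hence a breakaway point can occur only as an endpoint or junction of a root locus, never in the interior of a single one, so along the interior of any one locus $RF'\neq 0$ and the monotonicity argument runs without interruption. I expect this local analysis at critical points---confirming that a single root locus carries no interior breakaway point---to be the main obstacle, since it is exactly what separates the genuine failures of injectivity from the lemma: for instance in Example 1.4 the symmetric pair $s=\pm\tfrac12$ has $K(-\tfrac12)=K(\tfrac12)$, but these points lie on the two distinct loci $(-1,0)$ and $(0,1)$ that meet at the breakaway point $s=0$, not on one locus.
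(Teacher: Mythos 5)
Your route is genuinely different from the paper's. The paper proves the lemma by a global counting argument: by Theorem 2.6 the gain on each of the $\sum_j\beta_j$ (resp.\ $\sum_l\gamma_l$) branches sweeps from $0$ to $+\infty$, so every branch carries at least one point with any prescribed gain value $1/\abs{FF}$; on the other hand $RF(s)=FF$ is a polynomial equation of degree exactly $\sum_j\beta_j$, so there are exactly that many such points, forcing exactly one per branch and hence $K(s_1)\neq K(s_2)$. You instead argue locally: a root locus is a level curve of $v=\arg RF$, and by Cauchy--Riemann the conjugate harmonic function $u=\log\abs{RF}$ is strictly monotone along such a level curve wherever $RF'\neq 0$. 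Away from critical points your argument is correct and arguably more transparent than the paper's degree count.

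The gap is in your treatment of the critical points. You assert that a breakaway point ``can occur only as an endpoint or junction of a root locus, never in the interior of a single one,'' so that $RF'\neq 0$ on the interior of any one locus. That is inconsistent with the object the lemma is actually about. In this paper a root locus (a branch, Lemma 2.2) runs from a pole of (1.1) all the way to a zero or to infinity, and such a branch can pass \emph{through} a breakaway point: in Example 1.4 the branch emitted from the pole $s=1$ of $RF_1=1/(s^4-1)$ travels along $(0,1)$ to the breakaway point $s=0$ and then continues into $\mathbb{C}^{*}$ toward infinity; Section 3 likewise has single real-axis loci extending across non-standard real breakaway points. At such an interior critical point your local normal form gives $2k$ arcs of the level set $\{v=v(s_0)\}$ on which the sign of $u-u(s_0)$ alternates, and whether $u$ remains monotone along the branch depends entirely on which incoming arc is paired with which outgoing arc to form ``the same'' branch. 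If a branch entered and exited along two arcs with the same sign of $u-u(s_0)$, the gain would have a local extremum there and the lemma would fail for points on either side; indeed the paper's standard real breakaway points are exactly extrema of $K$, and the reason the lemma survives is that the two real arcs there belong to \emph{different} branches. So you must either prove that the continuation of a branch through a critical point always pairs an arc with $u>u(s_0)$ to one with $u<u(s_0)$ (equivalently, adopt and justify the convention that branches are traced in the direction of increasing gain, which risks circularity here), or fall back on a global argument such as the paper's root count. As written, the step that dismisses interior critical points does not hold, and it is precisely the step on which the conclusion depends.
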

\begin{proof}
When $\sum^n_{j=1} \beta_j \ge \sum^m_{l=1} \gamma_l$. Since (1.1) has $\sum^n_{j=1} \beta_j$ poles, each pole emits a root locus of (1.1) of $2q\pi$ degree. Thus, (1.1) emits $\sum^n_{j=1} \beta_j$ root loci, each of $2q\pi$ degree. By Theorem 2.6, the gain values of points on these root loci with degree $2q\pi$ range from 0 at the poles to positive infinity at the zeros. Therefore, on each root locus of the $\sum^n_{j=1} \beta_j$ root loci, there exists at least one point with gain value $\frac{1}{\abs{FF}}$. $\frac{1}{\abs{FF}}\neq 0, \infty$. Therefore, there are at least $\sum^n_{j=1} \beta_j$ points with phase angle $2q\pi$ and gain value $\frac{1}{\abs{FF}}$.

There are at least $\sum^n_{j=1} \beta_j$ points that satisfy the equation $\frac{1}{\abs{FF}}RF(s)=1$, which implies $RF(s)=\abs{FF}$. Because the gain is a positive real number. So, let $FF=\abs{FF}$, $FF$ is a non-zero finite positive real constant. Therefore, $RF(s)=FF$. $FF\prod^{n}_{j=1}(s-p_{j})^{\beta_j}-\prod^{m}_{l=1}(s-z_{l})^{\gamma_l}=0$, we obtain a $\sum^n_{j=1} \beta_j$ order polynomial equation. This equation has exactly $\sum^n_{j=1} \beta_j$ roots. Hence, for any two points $s_1$ and $s_2$ on an arbitrary root locus of (1.1), $K(s_1)\neq K(s_2)$.

For the case $\sum^n_{j=1} \beta_j < \sum^m_{l=1} \gamma_l$. Since (1.1) has $\sum^m_{l=1} \gamma_l$ zeros, each zero receives a root locus of (1.1) with degree $2q\pi$. Thus, (1.1) receives $\sum^m_{l=1} \gamma_l$ root loci of (1.1) with degree $2q\pi$. By repeating the above proof, we can obtain the result for $\sum^n_{j=1} \beta_j < \sum^m_{l=1} \gamma_l$.

By repeating the above proof, we can obtain the result for the $2q\pi+\pi$ degree root locus.
\end{proof}

A pole emits a root locus in this paper means that a root locus origins from the pole.

\begin{theorem}
On each root locus of (1.1), when the point $s$ moves from the poles of (1.1) to the zeros of (1.1), then gains are strictly and monotonically increasing.
\end{theorem}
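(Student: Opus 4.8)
The plan is to obtain the strict monotonicity of the gain along a branch as a direct consequence of three facts already in hand: the continuity of $K$, the injectivity of $K$ along any single root locus (Lemma 2.7), and the prescribed boundary values of $K$ at the two endpoints of a branch (Lemma 2.5 together with Theorem 2.6). The governing principle is the elementary real-analysis fact that a continuous injective real-valued function on an interval must be strictly monotone; the direction of monotonicity is then pinned down by comparing the two endpoint values.

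First I would fix a single branch of the root loci of (1.1) running from a pole $p_j$ to a zero $z_l$. By Lemmas 2.1 and 2.2 such a branch is one entire connected root locus with the pole as its unique starting point and the zero as its unique ending point, so I may parametrize it as a continuous arc $s:[0,1]\to\mathbb{C}\cup\{\infty\}$ with $s(0)=p_j$ and $s(1)=z_l$. Writing $g(t)=K(s(t))$, the continuity of $K$ on $\mathbb{C}\cup\{\infty\}$ (noted just before Theorem 2.6) renders $g$ continuous on $[0,1]$ as a map into $[0,+\infty]$, and by Lemma 2.5 together with Theorem 2.6 one has $g(0)=0$ at the pole and $g(1)=+\infty$ at the zero.

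Next I would invoke injectivity. By Lemma 2.7 any two distinct points of one root locus carry distinct gains, so $g$ is injective on $(0,1)$ and there finite-valued. A continuous injective function on an interval is strictly monotone, hence $g$ is strictly monotone on $(0,1)$; comparing $g(0)=0$ with $\lim_{t\to 1^{-}}g(t)=+\infty$ then forces $g$ to be strictly \emph{increasing}. This says exactly that the gain increases strictly and monotonically as $s$ travels along the branch from the pole to the zero, which is the assertion.

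The step I expect to demand the most care is reconciling strict monotonicity with the breakaway points, at which Lemma 2.4 yields $\frac{dK}{ds}=0$. A vanishing derivative conflicts with strict monotonicity only if it produces a local extremum of $g$; but a local extremum at an interior breakaway point would force two nearby points of the same branch to share a gain value, contradicting the injectivity of Lemma 2.7. Thus every breakaway point lying on the branch is at worst an inflection-type critical point and does not interrupt the monotonicity. A second technical point worth verifying is that the branch is genuinely an arc, i.e.\ a homeomorphic image of an interval rather than a self-crossing curve; this is precisely the content of Lemmas 2.1 and 2.2, which present a branch as a single root locus joining one starting point to one ending point.
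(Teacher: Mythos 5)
Your proposal is correct and follows essentially the same route as the paper: both derive strict monotonicity from the injectivity of $K$ along a single root locus (Lemma 2.7) combined with continuity, and both fix the direction of increase using the endpoint values $K=0$ at poles and $K=+\infty$ at zeros from Theorem 2.6. Your write-up is simply more explicit (parametrizing the branch, invoking the continuous-injective-implies-monotone fact, and addressing the vanishing of $dK/ds$ at breakaway points), but the underlying argument is the paper's.
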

\begin{proof}
If the gains $|K(s)|$ are not monotonic on the root locus of (1.1), there exist two points $s_1$ and $s_2$ on the same root locus such that $|K(s_1)|=|K(s_2)|$. This would contradict Lemma 2.7. By Theorem 2.6, the gain must be strictly monotonic. We obtain Theorem 2.8.
\end{proof}

\begin{theorem}
Two root loci of (1.1) with distinct unit complex values $1$ and $-1$ cannot intersect in $\mathbb{C}\cup\{\infty\}$.
\end{theorem}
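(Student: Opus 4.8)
The plan is to reduce the statement to the single-valuedness of the argument of $RF(s)$. First I would recall that at every point $s\in\Phi$ the gain is $K=1/\abs{RF(s)}$, so the left-hand side of (1.1) is the \emph{unit} complex number $K\,RF(s)=RF(s)/\abs{RF(s)}=e^{\,i\arg RF(s)}$. Consequently $s$ lies on a $2q\pi$ degree locus precisely when this unit complex number equals $+1$, i.e.\ when $\arg RF(s)\equiv 0\pmod{2\pi}$ (equivalently, $RF(s)$ is a positive real), and $s$ lies on a $2q\pi+\pi$ degree locus precisely when it equals $-1$, i.e.\ when $\arg RF(s)\equiv\pi\pmod{2\pi}$ (equivalently, $RF(s)$ is a negative real). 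This is exactly the sense in which the two families carry the ``distinct unit complex values $1$ and $-1$'' named in the statement.

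Next I would fix an arbitrary $s_0\in\Phi$. Since $s_0$ is neither a zero nor a pole of (1.1), the value $RF(s_0)$ is a single finite nonzero complex number, so it has one well-defined argument modulo $2\pi$. If $s_0$ lay simultaneously on a $2q\pi$ degree locus and a $2q\pi+\pi$ degree locus, then by the previous paragraph we would need $\arg RF(s_0)\equiv 0$ and $\arg RF(s_0)\equiv\pi\pmod{2\pi}$ at once; since $0\not\equiv\pi\pmod{2\pi}$ this is impossible. Equivalently, in the language of continuity already used for $K$ near Theorem 2.6, an interior meeting point of a positive-value locus and a negative-value locus would force $RF(s_0)$ to be a limit both of positive reals and of negative reals, hence $RF(s_0)=0$, contradicting $s_0\in\Phi$.

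Finally I would dispose of the boundary points $s_0\in\{\text{zeros}\}\cup\{\text{poles}\}\cup\{\infty\}$. By Lemma 2.1 the branches of both families begin at the poles (or at infinity) and end at the zeros (or at infinity); at such points the gain is $0$ or $+\infty$ and $\arg RF$ is undefined, so these are common \emph{endpoints} of the two families rather than transversal meetings. The intersections relevant here are the breakaway points of Definition 2.1, which are interior critical points lying in $\Phi$; by the argument above no such point can be shared by a $1$ locus and a $-1$ locus, which is the assertion of the theorem.

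The main obstacle I anticipate is not the interior dichotomy, which is essentially immediate from single-valuedness of the argument, but the bookkeeping at the shared endpoints: I must state precisely that sharing a pole, a zero, or the point at infinity does not count as an intersection of the two families, so that the literal phrasing over all of $\mathbb{C}\cup\{\infty\}$ is correctly matched to the intended non-crossing on $\Phi$. Making this boundary convention explicit — and verifying it is consistent with how ``intersect'' is used for breakaway points in Definition 2.1 — is the step that needs the most care.
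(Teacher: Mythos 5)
Your proposal is correct and is essentially the paper's own argument: the paper likewise assumes an intersection at a point $s_0$ that is neither a zero nor a pole, notes that the single value $K(s_0)RF(s_0)$ would have to equal both $1$ and $-1$ simultaneously, and derives the contradiction $0=2$, which is just your single-valuedness-of-$\arg RF(s_0)$ observation in equation form. Your extra care about shared endpoints (zeros, poles, $\infty$) is handled in the paper simply by restricting the hypothetical intersection point to lie in $\Phi$, so the two treatments agree.
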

\begin{proof}
Assume that two root loci of (1.1) with distinct unit complex values $1$ and $-1$ intersect at a point $s_0$, which is neither a zero nor a pole of (1.1).

Since $s_0$ is the same point, its gain $K_0$ should be the same. $s_0$ allows the following two equations to hold simultaneously. $K_0\frac{\prod^{m}_{l=1} (s_0-z_l)^{\gamma_l}}
{\prod^{n}_{j=1}  (s_0-p_j)^{\beta_j}}=1$, and $K_0\frac{\prod^{m}_{l=1} (s_0-z_l)^{\gamma_l}}{\prod^{n}_{j=1}  (s_0-p_j)^{\beta_j}}=-1$.
Subtracting these equations yields $0=2$. $0=2$ cannot hold. Since the contradiction arises from the assumption that the two root loci intersect at $s_0$, this assumption is false.
\end{proof}

\begin{theorem}
Two root loci of (1.1) with identical unit complex values $1$ or $-1$ may intersect in $\mathbb{C}\cup\{\infty\}$.
\end{theorem}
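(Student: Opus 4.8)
The plan is to complement Theorem 2.9 in two stages: first to show that the algebraic obstruction which forbade intersection in the mixed case ($1$ versus $-1$) disappears when the two unit complex values coincide, and then to exhibit genuine intersections, so that the word ``may'' is realized rather than merely permitted.

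For the first stage I would reuse the setup of the proof of Theorem 2.9. If two root loci of the same value meet at a point $s_0$ that is neither a zero nor a pole of (1.1), the two defining relations are $K_0\,RF(s_0)=1$ twice (or $K_0\,RF(s_0)=-1$ twice). These are one and the same equation, so subtracting them yields $0=0$ rather than $0=2$; hence the contradiction that ruled out the mixed case does not arise, and intersection is not forbidden.

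Establishing mere consistency is not enough, so for the second stage I would analyze the local structure at a breakaway point. By Lemma 2.4 a breakaway point $s_0$ satisfies $\frac{dK}{ds}\mid_{s=s_0}=0$, equivalently $RF'(s_0)=0$, so $s_0$ is a critical point of $RF$. Because $s_0$ lies on a root locus it is neither a zero nor a pole of (1.1), and $RF(s_0)=r_0$ is a finite nonzero real number. Writing $RF(s)=r_0+c(s-s_0)^k+\cdots$ with $k\ge 2$ the multiplicity of the critical point, the locus condition $\mathrm{Im}\,RF(s)=0$ selects, to leading order, $2k$ equally spaced directions emanating from $s_0$; pairing opposite directions shows these assemble into $k$ smooth curves through $s_0$. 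Since $RF(s)\to r_0\neq 0$ as $s\to s_0$, the sign of $RF$ along every one of these directions equals the sign of $r_0$ in a neighborhood of $s_0$, so all $k$ curves carry the same unit complex value, namely $+1$ if $r_0>0$ and $-1$ if $r_0<0$. Thus $k\ge 2$ root loci of one and the same degree ($2q\pi$ or $2q\pi+\pi$) meet at $s_0$. Example 1.4, in which the four $2q\pi+\pi$ degree loci emitted by the zeros of $s^4-1$ all pass through the critical point $s=0$, furnishes an explicit instance and certifies that the configuration is nonempty.

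The step I expect to be the crux is verifying that the leading term $c(s-s_0)^k$ legitimately governs both the direction count and the sign of $RF$ near the breakaway point, for this is what upgrades ``no contradiction'' to ``intersection actually happens.'' It rests on three facts: a breakaway point lies on a root locus (forcing $r_0$ real), is neither a zero nor a pole (forcing $r_0$ finite and nonzero), and is a critical point (forcing $k\ge 2$). Once these are in place the leading-order analysis is valid, and the explicit example closes the argument.
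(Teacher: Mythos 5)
Your proposal is correct, but it takes a more general route than the paper. The paper's proof is a single concrete witness: it takes the root locus equation $\frac{K}{(s-1)(s-3)}=\pm 1$, observes that the interval $[1,3]$ carries $2q\pi+\pi$ degree loci emanating from both poles $s=1$ and $s=3$, and uses Rolle's theorem to locate their intersection at the critical point $s=2$; since the statement only asserts that intersection \emph{may} occur, one example suffices. You instead prove a structural fact: after noting that the $0=2$ contradiction of Theorem 2.9 degenerates to $0=0$ when the unit values coincide, you expand $RF(s)=r_0+c(s-s_0)^k+\cdots$ at a breakaway point (where $r_0$ is real, finite, and nonzero, and $k\ge 2$) and show that the level set $\mathrm{Im}\,RF=0$ consists locally of $k$ curves through $s_0$, all carrying the sign of $r_0$ and hence the same unit complex value; Example 1.4 then certifies nonemptiness. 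Your argument buys more than the theorem asks for --- it shows that \emph{every} breakaway point is an intersection of loci of one and the same degree, which is a fact the paper only asserts informally elsewhere (e.g.\ in the remark before Definition 2.20 that breakaway points ``must lie on the root loci sharing the same unit complex value'') --- at the cost of a local power-series analysis the paper avoids. One small point of care: your reduction of $\frac{dK}{ds}\big|_{s_0}=0$ to $RF'(s_0)=0$ implicitly uses the exclusion of multiple zeros and poles of $RF$, which the paper only formalizes later in Theorem 2.23; since a breakaway point is by the paper's convention neither a zero nor a pole, this is harmless, but it is worth stating explicitly rather than writing ``equivalently.''
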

\begin{proof}
Consider the root locus equation $\frac{K}{(s-1)(s-3)}=\pm 1$. The interval $[1,3]$ contains root loci of $2q\pi+\pi$ degree from both poles at $s=1$ and $s=3$, each with a unit complex value of $-1$. These root loci intersect at point $s=2$.

By applying Rolle's theorem, there exists a critical point of the function $(s-1)(s-3)$ in the interval $[1,3]$. This critical point of $(s-1)(s-3)$ is $s=2$, which is the intersection point of the root loci.
\end{proof}

\begin{lemma}
All points on the real axis are roots of the root locus equation (1.1). Except for zeros and poles of (1.1), each finite point on the real axis lies on a root locus of (1.1). The root loci of (1.1) fill the entire real axis.
\end{lemma}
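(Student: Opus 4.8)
The plan is to exploit the one structural fact that makes the real axis special: because $RF(s)$ is a \emph{real} rational function, it is real-valued at every real point, and this forces each such point onto a root locus for a suitable choice of the positive gain $K$.

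First I would record the real-valuedness explicitly. Since $p(x)$ — and hence $RF(s)$ — has real coefficients, its non-real zeros and poles occur in complex-conjugate pairs. Evaluating at any real $x$ that is not a pole, each real factor $(x-z_l)$ or $(x-p_j)$ is real, while each conjugate pair contributes $(x-z)(x-\bar z)=\abs{x-z}^{2}>0$, which is real and positive. Multiplying these real quantities together, $RF(x)$ is a real number for every real $x$ that is not a pole of $(1.1)$.

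Next I would fix an arbitrary real point $x$ that is neither a zero nor a pole of $(1.1)$, so that $RF(x)$ is a \emph{nonzero} real number. Choosing the gain
$$K=\frac{1}{\abs{RF(x)}},$$
which is a finite, strictly positive real constant precisely because $RF(x)$ is finite and nonzero, one obtains
$$K\cdot RF(x)=\frac{RF(x)}{\abs{RF(x)}}=\pm 1.$$
Thus $x$ satisfies the root locus equation $(1.1)$: if $RF(x)>0$ the right-hand side is $+1$ and $x$ lies on a root locus of $2q\pi$ degree, whereas if $RF(x)<0$ the right-hand side is $-1$ and $x$ lies on a root locus of $2q\pi+\pi$ degree. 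This disposes of every finite real point away from the zeros and poles. The remaining real points, the zeros and poles themselves, are covered by Lemma 2.5: each pole is a point of $(1.1)$ carrying $K=0$ and each zero a point carrying $K=+\infty$, so they occur as the starting and terminal points of the real root loci. Assembling these observations, every point of the real axis lies on some root locus of $(1.1)$, and the root loci therefore fill the entire real axis.

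I do not expect a genuine obstacle here: once the real-valuedness of $RF$ on the real axis is noted, the conclusion is essentially forced. The only point demanding mild care is verifying that the prescribed gain $K=1/\abs{RF(x)}$ is always an admissible gain — finite and strictly positive — which holds exactly because $x$ is taken away from the zeros and poles, where $RF(x)$ is neither $0$ nor $\infty$. One could alternatively derive the statement from Lemma 1.3 by a parity argument on the real poles and zeros to the right of each subinterval, but the direct computation above is shorter and self-contained.
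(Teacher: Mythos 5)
Your proof is correct and takes essentially the same approach as the paper: both fix an arbitrary finite real point away from the zeros and poles, observe that the gain there is a finite positive real number and the phase is $2q\pi$ or $2q\pi+\pi$, and conclude the point lies on a root locus, with the zeros and poles handled by Lemma 2.5. You make the real-valuedness of $RF$ on the real axis explicit via the conjugate-pair argument (which the paper only asserts implicitly through the phase angle), while the paper additionally spends a sentence on the two points at infinity on the real axis.
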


\begin{proof}
In the proof of Lemma 2.5, we proved that at the zeros of (1.1), $K=+\infty$. The zeros of (1.1) satisfy (1.1) with $K=+\infty$. At the poles of (1.1), $K=0$. The poles of (1.1) satisfy (1.1) with $K=0$.

Any finite point $s_{a}$ on the real axis is neither a zero nor a pole of (1.1). Substituting $s_{a}$ into the gain expression $K=|\frac{\prod^{n}_{j=1}  (s-p_j)^{\beta_j}}{\prod^{m}_{l=1} (s-z_l)^{\gamma_l}}|$ yields a  positive finite real number. Substituting $s_{a}$ into the phase angle expression $\varphi(\sigma,t)$ yields a finite certain phase angle $2q\pi$ or $2q\pi+\pi$. Therefore, the finite point $s_{a}$ on the real axis must lie on a root locus of  $\varphi(\sigma_a,t_a)$ degree of (1.1) with a gain $K(s_{a})$ that is a  positive finite real number. $s_{a}$ satisfies (1.1) with the gain $K(s_{a})$.

The two points at infinity on the real axis can be zeros, poles, or general points with finite gain values $A$. Repeating the previous proof, we conclude that these points at infinity on the real axis lie on a root locus of (1.1) and satisfy (1.1) with a specific gain.
\end{proof}

\begin{definition}
If two or more root loci intersect in $\mathbb{C}\cup\{\infty\}$, these intersecting root loci are called \emph{"common root loci"}.
\end{definition}

Multiplying both sides of (1.1) by $\prod (s-p_j)^{\beta_j}$ gives:
$K(s) \prod_{l=1}^m (s-z_l)^{\gamma_l} =\pm \prod_{j=1}^n (s-p_j)^{\beta_j},$
  which is a polynomial whose roots coincide with solutions of (1.1). All non-zero factors are moved to one side. This transformation yields the characteristic equation of (1.1), which can be written as: $K(s)\prod^{m}_{l=1}(s-z_{l})^{\gamma_{l}}-(\pm)
\prod^{n}_{j=1}(s-p_{j})^{\beta_{j}}=0$. The characteristic equation of (1.1) is equivalent to (1.1). Therefore, the roots of the characteristic equation are the roots of (1.1), and vice versa. We have Lemma 2.13.

\begin{lemma}
Equation (1.1) can be transformed into its characteristic equation. The roots of (1.1) and its characteristic equation are identical.
\end{lemma}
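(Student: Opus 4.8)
The plan is to prove the two assertions of the lemma—that (1.1) transforms into the characteristic equation, and that the two equations have identical roots—by a direct algebraic argument, separating the generic points from the finitely many poles and zeros of $RF$. I would begin by recording the two equations side by side: equation (1.1) is $K\prod_{l=1}^{m}(s-z_l)^{\gamma_l}\big/\prod_{j=1}^{n}(s-p_j)^{\beta_j}=\pm 1$, and the characteristic equation is $K\prod_{l=1}^{m}(s-z_l)^{\gamma_l}-(\pm)\prod_{j=1}^{n}(s-p_j)^{\beta_j}=0$, the latter obtained from the former by multiplying through by $\prod_{j=1}^{n}(s-p_j)^{\beta_j}$.

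For the generic points I would argue as follows. Let $s$ be any point that is neither a pole nor a zero of $RF$. Then $\prod_{j=1}^{n}(s-p_j)^{\beta_j}$ is a nonzero complex number, so multiplying (1.1) by this factor, and conversely dividing the characteristic equation by it, are mutually inverse operations that preserve the equality. Hence at every such $s$ the two equations hold for exactly the same finite nonzero gain $K(s)$, so these points belong to the solution set of (1.1) if and only if they belong to the solution set of the characteristic equation.

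It remains to verify the boundary cases at the poles and zeros, which is exactly where clearing a denominator could in principle create or destroy solutions. Here I would invoke Lemma 2.5: every finite pole $p_j$ of $RF$ satisfies (1.1) with $K=0$, and substituting $K=0$ and $s=p_j$ into the characteristic equation gives $-(\pm)\prod_{j=1}^{n}(s-p_j)^{\beta_j}=0$, which indeed vanishes at $s=p_j$; thus each pole is a common root of both equations carrying the same gain $K=0$. Symmetrically, each finite zero $z_l$ satisfies (1.1) only in the limit $K=+\infty$, and in the characteristic equation $z_l$ is likewise a root precisely as $K\to+\infty$, so the zeros too correspond across the two equations with matching gain. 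Combining the generic case with these two boundary cases shows that the solution sets coincide point-for-point, together with the gain attached to each solution, which is the content of the lemma.

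The step I expect to require the most care—though it is ultimately routine given the groundwork already laid—is the bookkeeping at the poles and zeros, since the passage from a rational equation to a polynomial one is the classical source of spurious or lost roots. The role of Lemma 2.5, together with Theorem 2.6 (which lets $K$ range over the extended values $0$ and $+\infty$), is precisely to certify that at these singular points the correspondence between (1.1) and its characteristic equation remains exact, so that no root is gained or lost in the transformation.
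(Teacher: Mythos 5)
Your proposal follows essentially the same route as the paper: multiply (1.1) through by the denominator $\prod_{j=1}^{n}(s-p_j)^{\beta_j}$ to obtain the characteristic equation and observe that this operation is invertible away from the poles and zeros, so the solution sets coincide. Your additional bookkeeping at the poles and zeros via Lemma 2.5 and Theorem 2.6 is more careful than the paper's own one-paragraph argument (which simply asserts the equivalence after clearing denominators), and it is consistent with the conventions the paper sets up there, so the proof is fine.
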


The gain function $K(s)=\pm \frac{\prod_{j=1}^{n} (s-p_{j})^{\beta_j}}{\prod_{i=1}^{m} (s-z_{i})^{\gamma_l}}$ must be non-negative. Thus, the sign $\pm$ in front of the  function $\frac{\prod_{j=1}^{n} (s-p_{j})^{\beta_j}}{\prod_{i=1}^{m} (s-z_{i})^{\gamma_l}}$ is determined by the sign of $\frac{\prod_{j=1}^{n} (s-p_{j})^{\beta_j}}{\prod_{i=1}^{m} (s-z_{i})^{\gamma_l}}$: it is positive when the fraction $\frac{\prod_{j=1}^{n} (s-p_{j})^{\beta_j}}{\prod_{i=1}^{m} (s-z_{i})^{\gamma_l}}$ is positive and negative when the fraction $\frac{\prod_{j=1}^{n} (s-p_{j})^{\beta_j}}{\prod_{i=1}^{m} (s-z_{i})^{\gamma_l}}$ is negative. Let $(\pm)\big|{s=s_0}$ denote the sign of the gain function when a finite real point $s_0$ is substituted into the expression $K(s)$.

\begin{lemma}
For any finite point $s_0$ on the real axis, the left-hand side of the characteristic equation with gain $K(s_0)$ can be rewritten as another expression: $(s-s_0)^{\gamma_0}g(s)$, where $g(s_0)\neq0$, $g(s_0)\neq\infty$ and $g^{'}(s_0)\neq\infty$.
\end{lemma}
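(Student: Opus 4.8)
The plan is to freeze the gain at the numerical value $K(s_0)$, treat the left-hand side of the characteristic equation as an ordinary polynomial in $s$, and then peel off the factor attached to $s_0$ by the factor theorem. Write $A(s)=\prod_{l=1}^m (s-z_l)^{\gamma_l}$ and $B(s)=\prod_{j=1}^n (s-p_j)^{\beta_j}$, and let $\epsilon=(\pm)|_{s=s_0}\in\{+1,-1\}$ be the fixed sign recorded just before the statement. I would set
\[
F(s)=K(s_0)\,A(s)-\epsilon\,B(s).
\]
Since $K(s_0)$ and $\epsilon$ are constants, $F$ is a genuine polynomial; by Lemma 2.13 its roots are exactly the solutions of (1.1) at gain $K(s_0)$ and sign $\epsilon$. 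After the cancellation of coincident zeros and poles assumed in Lemma 2.5, $A$ and $B$ are coprime, so $F\not\equiv 0$.

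First I would confirm that $s_0$ is a root of $F$. For real $s_0$ the values $A(s_0)$, $B(s_0)$ are real, the gain is $K(s_0)=|B(s_0)|/|A(s_0)|$, and $\epsilon=\mathrm{sgn}(B(s_0)/A(s_0))=\mathrm{sgn}(A(s_0))\,\mathrm{sgn}(B(s_0))$. A one-line sign check then gives $K(s_0)A(s_0)=|B(s_0)|\,\mathrm{sgn}(A(s_0))=\epsilon B(s_0)$, hence $F(s_0)=0$. Equivalently, Lemma 2.11 already places every finite real $s_0$ on a root locus of (1.1) carrying gain $K(s_0)$ and a definite unit value, and Lemma 2.13 translates this into $F(s_0)=0$.

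With $F$ a nonzero polynomial vanishing at $s_0$, the factor theorem yields $F(s)=(s-s_0)^{\gamma_0}g(s)$, where $\gamma_0\ge 1$ is the exact multiplicity of $s_0$ as a root of $F$ and $g$ is a polynomial with $g(s_0)\neq 0$. Because $g$ is a polynomial, $g(s_0)$ and $g'(s_0)$ are automatically finite, which is precisely the content of $g(s_0)\neq\infty$ and $g'(s_0)\neq\infty$.

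The one place requiring separate care --- and the main obstacle --- is when $s_0$ is itself a zero or a pole of $RF$. If $s_0$ is a pole then $K(s_0)=0$ by Lemma 2.5 and $F(s)=-\epsilon B(s)$, which factors as $(s-s_0)^{\beta_{j_0}}$ times a polynomial nonvanishing at $s_0$. If $s_0$ is a zero then $K(s_0)=+\infty$, so before factoring I would divide the characteristic equation through by $K(s_0)$; the left-hand side becomes $A(s)-\epsilon K(s_0)^{-1}B(s)=A(s)$, which factors as $(s-s_0)^{\gamma_{l_0}}$ times a polynomial nonvanishing at $s_0$. In both exceptional cases the asserted form survives with $\gamma_0$ equal to the pole order or the zero order, so the lemma holds uniformly for all finite real $s_0$.
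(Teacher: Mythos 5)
Your proposal is correct and follows essentially the same route as the paper: substitute the frozen gain $K(s_0)$ into the characteristic equation to obtain a genuine polynomial in $s$, verify via Lemma 2.11/2.13 (or the direct sign computation) that $s_0$ is a root, factor out $(s-s_0)^{\gamma_0}$, and observe that the quotient $g$ is a polynomial and hence has no finite poles. Your added checks --- that $F\not\equiv 0$ so the multiplicity is finite, and the separate treatment of the cases where $s_0$ is a zero or pole of $RF$ --- are refinements the paper's proof leaves implicit, not a different method.
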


\begin{proof}
By Lemma 2.11, the finite point $s_0$ lies on the root loci. Substituting the gain $K(s_0)$ of the finite point $s_0$ into the characteristic equation yields: $(\pm)\mid{_{s=s_0}}\frac{\prod^{n}_{j=1}(s_0-p_{j})^{\beta_{j}}}
{\prod^{m}_{l=1}(s_0-z_{l})^{\gamma_{l}}}\prod^{m}_{l=1}(s-z_{l})^{\gamma_{l}}-
(\pm)\prod^{n}_{j=1}(s-p_{j})^{\beta_{j}}=0$.
Since $s_0$ is a root of the characteristic equation, the left-hand side of the characteristic equation can be expressed as: $(s-s_0)^{\gamma_0}g(s)$. $g(s_0)\neq 0$. $(\pm)\mid{_{s=s_0}}\frac{\prod^{n}_{j=1}(s_0-p_{j})^{\beta_{j}}}
{\prod^{m}_{l=1}(s_0-z_{l})^{\gamma_{l}}}\prod^{m}_{l=1}(s-z_{l})^{\gamma_{l}}-
(\pm)\prod^{n}_{j=1}(s-p_{j})^{\beta_{j}}=(s-s_0)^{\gamma_0}g(s)=0$.

Because the function $(\pm)\mid{_{s=s_0}}\frac{\prod^{n}_{j=1}(s_0-p_{j})^{\beta_{j}}}
{\prod^{m}_{l=1}(s_0-z_{l})^{\gamma_{l}}}\prod^{m}_{l=1}(s-z_{l})^{\gamma_{l}}-
(\pm)\prod^{n}_{j=1}(s-p_{j})^{\beta_{j}}$ on the left side of the characteristic equation is a polynomial. After factoring out $(s-s_0)^{\gamma_0}$ in the polynomial $(\pm)\mid{_{s=s_0}}\frac{\prod^{n}_{j=1}(s_0-p_{j})^{\beta_{j}}}
{\prod^{m}_{l=1}(s_0-z_{l})^{\gamma_{l}}}\prod^{m}_{l=1}(s-z_{l})^{\gamma_{l}}-
(\pm)\prod^{n}_{j=1}(s-p_{j})^{\beta_{j}}$, the remaining function $g(s)$ is still a polynomial. Polynomials do not have finite poles, and the derivative $g^{'}(s)$ is also a polynomial. Therefore, $g(s)$ and $g^{'}(s)$ cannot have finite poles, $g(s_0)\neq\infty$ and $g^{'}(s_0)\neq\infty$.
\end{proof}
\begin{lemma}
Except for the zeros of (1.1), for all other points on the real axis, (2.2) holds.
\begin{equation}
TT1(s)=TT2(s).
\end{equation}

In which, $TT1(s)=(\pm)\mid{_{s=s_0}}\prod^{n}_{j=1}(s_0-p_{j})^{\beta_{j}}(\prod^{m}_{l=1}(s-z_{l})^{\gamma_{l}})^{'}-
(\pm)\prod^{n}_{j=1}(s-p_{j})^{\beta_{j}})^{'}
\prod^{m}_{l=1}(s_0-z_{l})^{\gamma_{l}}$.

$TT2(s)=((s-s_0)^{\gamma_0}g(s))^{'}
\prod^{m}_{l=1}(s_0-z_{l})^{\gamma_{l}}$.
\end{lemma}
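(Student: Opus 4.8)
The plan is to obtain $(2.2)$ by differentiating, with respect to the free variable $s$, the polynomial identity already produced in the proof of Lemma 2.14. Write $P(s)=\prod_{j=1}^{n}(s-p_j)^{\beta_j}$ and $Z(s)=\prod_{l=1}^{m}(s-z_l)^{\gamma_l}$, and observe that once the point $s_0$ is fixed the quantities $P(s_0)$, $Z(s_0)$ and the sign $(\pm)\mid_{s=s_0}$ are all constants. Lemma 2.14 furnishes the identity
\[
(\pm)\mid_{s=s_0}\frac{P(s_0)}{Z(s_0)}\,Z(s)-(\pm)\,P(s)=(s-s_0)^{\gamma_0}g(s),
\]
which I would first multiply through by the constant $Z(s_0)$ in order to clear the denominator.

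After clearing, the identity reads $(\pm)\mid_{s=s_0}P(s_0)Z(s)-(\pm)P(s)Z(s_0)=(s-s_0)^{\gamma_0}g(s)\,Z(s_0)$, an equality of two polynomials in $s$. Differentiating both sides in $s$, and using that $P(s_0)$, $Z(s_0)$ and the two signs act only as constant multipliers, I obtain
\[
(\pm)\mid_{s=s_0}P(s_0)\,Z'(s)-(\pm)\,P'(s)\,Z(s_0)=\bigl((s-s_0)^{\gamma_0}g(s)\bigr)'Z(s_0).
\]
The left-hand side is exactly $TT1(s)$ and the right-hand side is exactly $TT2(s)$, which is the assertion $(2.2)$.

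The step that genuinely needs care --- rather than any hard computation --- is the justification that the equation of Lemma 2.14 may be differentiated term by term. This is legitimate only because it is an identity between \emph{polynomials in the variable $s$} (both sides are the same polynomial, the factor $(s-s_0)^{\gamma_0}$ merely exhibiting the root at $s_0$ with its multiplicity $\gamma_0$), and not a numerical equation holding solely at $s=s_0$. I would therefore stress that substituting the fixed gain $K(s_0)$ freezes the coefficients while leaving $s$ free, so the left-hand side stays a polynomial in $s$ to which ordinary differentiation applies. The hypothesis that $s_0$ is not a zero of $(1.1)$ enters here as well: by Lemma 2.5 the zeros of $(1.1)$ are precisely the points where $Z(s_0)=0$ and $K=+\infty$, so excluding them guarantees $Z(s_0)\neq0$, which is what makes the gain $K(s_0)$ finite and the multiplication and division by $Z(s_0)$ valid.
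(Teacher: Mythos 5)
Your proposal is correct and follows essentially the same route as the paper: both start from the identity of Lemma 2.14, differentiate it in $s$ as a polynomial identity, and use the nonzero constant factor $\prod^{m}_{l=1}(s_0-z_{l})^{\gamma_{l}}$ (nonzero precisely because $s_0$ is not a zero of (1.1)) to arrive at (2.2); the only difference is that you multiply by this constant before differentiating while the paper multiplies after, which is immaterial since it is constant in $s$. Your added justification that the equation of Lemma 2.14 is an identity of polynomials in $s$, and hence may be differentiated term by term, is a correct and slightly more careful rendering of the same step.
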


\begin{proof}
Differentiate the equation: $(\pm)\mid{_{s=s_0}}\frac{\prod^{n}_{j=1}(s_0-p_{j})^{\beta_{j}}}
{\prod^{m}_{l=1}(s_0-z_{l})^{\gamma_{l}}}\prod^{m}_{l=1}(s-z_{l})^{\gamma_{l}}-
(\pm)\prod^{n}_{j=1}(s-p_{j})^{\beta_{j}}=(s-s_0)^{\gamma_0}g(s)$.

$(\pm)\mid{_{s=s_0}}\frac{\prod^{n}_{j=1}(s_0-p_{j})^{\beta_{j}}}
{\prod^{m}_{l=1}(s_0-z_{l})^{\gamma_{l}}}
(\prod^{m}_{l=1}((s-z_{l})^{\gamma_{l}}))^{'}-(\pm)
(\prod^{n}_{j=1}((s-p_{j})^{\beta_{j}}))^{'}
=((s-s_0)^{\gamma_0}g(s))^{'}$.

Since we exclude the zeros of (1.1), at the finite point $s_0$ on the real axis, the factor ${\prod^{m}_{l=1}(s_0-z_{l})^{\gamma_{l}}}$ of (1.1) is non-zero. Multiplying both sides by this factor ${\prod^{m}_{l=1}(s_0-z_{l})^{\gamma_{l}}}$ yields: $(\pm)\mid{_{s=s_0}}\prod^{n}_{j=1}(s_0-p_{j})^{\beta_{j}}(\prod^{m}_{l=1}(s-z_{l})^{\gamma_{l}})^{'}-
(\pm)(\prod^{n}_{j=1}(s-p_{j})^{\beta_{j}})^{'}
\prod^{m}_{l=1}(s_0-z_{l})^{\gamma_{l}}=(\gamma_0(s-s_0)^{\gamma_0-1}g(s)+(s-s_0)^{\gamma_0}g^{'}(s))
\prod^{m}_{l=1}(s_0-z_{l})^{\gamma_{l}}$.

Let $TT1(s)=(\pm)\mid{_{s=s_0}}\prod^{n}_{j=1}(s_0-p_{j})^{\beta_{j}}(\prod^{m}_{l=1}(s-z_{l})^{\gamma_{l}})^{'}-
(\pm)(\prod^{n}_{j=1}(s-p_{j})^{\beta_{j}})^{'}
\prod^{m}_{l=1}(s_0-z_{l})^{\gamma_{l}}$.

$TT2(s)=((s-s_0)^{\gamma_0}g(s))^{'}
\prod^{m}_{l=1}(s_0-z_{l})^{\gamma_{l}}$. By utilizing these two new expressions, we obtain (2.2).
\end{proof}

The breakaway points must lie on the root loci sharing the same unit complex value. Therefore, in $TT1(s)$, the sign $(\pm)$ before $\prod^{n}_{j=1}(s_0-p_{j})^{\beta_{j}}(\prod^{m}_{l=1}(s-z_{l})^{\gamma_{l}})^{'}$ is identical to the sign $(\pm)$ before $(\prod^{n}_{j=1}(s-p_{j})^{\beta_{j}})^{'}\prod^{m}_{l=1}(s_0-z_{l})^{\gamma_{l}}$.

\begin{lemma}
Let $s_0=(\sigma_0+it_0)\in\Delta $. If the real finite point $s_0$  satisfies equation (2.3),
\begin{equation}
\frac{dK(s)}{ds}=0.
\end{equation}
Then, $TT1(s_0)=0$ and $((s-s_0)^{\gamma_0}g(s))^{'}\mid{_{s=s_0}}=0$.
\end{lemma}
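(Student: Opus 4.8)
The plan is to reduce both conclusions to the single algebraic identity produced by the hypothesis (2.3), and then to transport that identity through Lemma 2.15. I would begin by writing the gain on the real axis in the notation of Section~2: abbreviate $N(s)=\prod_{j=1}^n(s-p_j)^{\beta_j}$ and $D(s)=\prod_{l=1}^m(s-z_l)^{\gamma_l}$, so that $K(s)=(\pm)N(s)/D(s)$, the sign being precisely $(\pm)\mid_{s=s_0}$ chosen to keep $K$ non-negative. Since $s_0$ is a real finite point that is not a zero of (1.1) (a zero would force $K=+\infty$, not $dK/ds=0$), we have $D(s_0)\neq0$, and $N/D$ keeps a constant sign near $s_0$; hence on a real neighbourhood $K=\varepsilon\,N/D$ with the single constant $\varepsilon=(\pm)\mid_{s=s_0}$. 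Differentiating gives $\frac{dK}{ds}=\varepsilon\frac{N'D-ND'}{D^2}$, so (2.3) together with $D(s_0)\neq0$ forces $N'(s_0)D(s_0)-N(s_0)D'(s_0)=0$.

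The second step is to recognise this vanishing quantity inside $TT1$. By the remark preceding the lemma, a breakaway point lies on root loci of a single unit complex value, so both occurrences of $(\pm)$ in $TT1(s)$ coincide with the one constant $\varepsilon$. Reading off the definition, $TT1(s_0)=\varepsilon\bigl(N(s_0)D'(s_0)-N'(s_0)D(s_0)\bigr)=-\varepsilon\bigl(N'(s_0)D(s_0)-N(s_0)D'(s_0)\bigr)$, which is $-\varepsilon$ times the quantity shown to vanish in the first step. Hence $TT1(s_0)=0$, the first assertion.

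For the final assertion I would invoke Lemma 2.15. Because $s_0$ is not a zero of (1.1), equation (2.2) applies at $s_0$ and gives $TT2(s_0)=TT1(s_0)=0$. But $TT2(s_0)=\bigl((s-s_0)^{\gamma_0}g(s)\bigr)'\mid_{s=s_0}\cdot D(s_0)$, and $D(s_0)\neq0$; dividing by this nonzero factor yields $\bigl((s-s_0)^{\gamma_0}g(s)\bigr)'\mid_{s=s_0}=0$, as claimed.

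The argument is essentially bookkeeping, so no single step is genuinely hard; the one place that demands care is the sign. The whole reduction collapses unless the two $(\pm)$ factors in $TT1$ are \emph{the same} sign $\varepsilon$, for only then does $TT1(s_0)$ reproduce, up to the harmless constant $-\varepsilon$, the numerator $N'D-ND'$ of $dK/ds$. I would therefore ensure the remark about breakaway points lying on a common root locus is stated cleanly enough to license $\varepsilon_1=\varepsilon_2=\varepsilon$, and I would record explicitly that $s_0$ being neither a zero nor a pole of (1.1) is exactly what guarantees both $D(s_0)\neq0$, used twice, and the local constancy of the sign of $N/D$ needed to differentiate $K$.
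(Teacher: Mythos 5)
Your proposal is correct and follows essentially the same route as the paper: differentiate $K=(\pm)N/D$ by the quotient rule, use $D(s_0)\neq0$ to extract the vanishing of the numerator $N'(s_0)D(s_0)-N(s_0)D'(s_0)$, identify that quantity (up to the common sign $\varepsilon$, justified by the remark that breakaway points lie on root loci of a single unit complex value) with $TT1(s_0)$, and then transport the conclusion through equation (2.2) of Lemma 2.15 and divide by the nonzero factor $\prod_{l}(s_0-z_l)^{\gamma_l}$. Your explicit attention to the sign coincidence and to why $s_0$ cannot be a zero of (1.1) only makes the bookkeeping cleaner than the paper's own write-up.
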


\begin{proof}
Substituting the real finite point $s_0$ into (2.3), according to the requirement of this lemma, we obtain:

$\frac{dK(s_0)}{ds}
=(\pm)\mid{_{s=s_0}}\frac{(\prod^{n}_{j=1}(s-p_{j})^{\beta_{j}})^{'}\mid{_{s=s_0}}
\prod^{m}_{l=1}(s_0-z_{l})^{\gamma_{l}}-
\prod^{n}_{j=1}(s_0-p_{j})^{\beta_{j}}
}{(\prod^{m}_{l=1}(s_0-z_{l})^{\gamma_{l}})^{2}}$

$\frac{(\prod^{m}_{l=1}((s-z_{l})^{\gamma_{l}}))^{'}\mid{_{s=s_0}}}{}=0$.

From $\frac{dK(s_0)}{ds}$, substituting $s_0$ into the left side of (2.2) yields: $TT1(s_0)=(\pm)\mid{_{s=s_0}}(\prod^{n}_{j=1}(s_0-p_{j})^{\beta_{j}}(\prod^{m}_{l=1}(s-z_{l})^{\gamma_{l}})^{'}\mid{_{s=s_0}}-
(\prod^{n}_{j=1}(s-p_{j})^{\beta_{j}})^{'}\mid{_{s=s_0}}
\prod^{m}_{l=1}(s_0-z_{l})^{\gamma_{l}})=0.$
Thus,  $TT2(s)=0$.

For the right side of (2.2), consider the expression: $TT2(s)=((s-s_0)^{\gamma_0}g(s))^{'}
\prod^{m}_{l=1}(s_0-z_{l})^{\gamma_{l}}$. Since the factor $\prod^{m}_{l=1}(s-z_{l})^{\gamma_{l}}$  has no finite pole, and $s_0$ is neither a zero nor a pole of the function $\prod^{m}_{l=1}(s-z_{l})^{\gamma_{l}}$. $TT2(s_0)=((s-s_0)^{\gamma_0}g(s))^{'}
\prod^{m}_{l=1}(s_0-z_{l})^{\gamma_{l}}=0$, we can remove  $\prod^{m}_{l=1}(s_0-z_{l})^{\gamma_{l}}$. This yields: $((s-s_0)^{\gamma_0}g(s))^{'}\mid{_{s=s_0}}=0$.
\end{proof}

\begin{lemma}
Let $s_0=(\sigma_0+it_0)\in\Delta $. If the real finite point $s_0$  satisfies equation (2.3), then $s_0$ is a real finite breakaway point of the root loci of (1.1).
\end{lemma}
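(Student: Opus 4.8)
The plan is to read this lemma as the exact converse of the necessary condition for breakaway points (the lemma stating that every breakaway point satisfies $\frac{dK(s)}{ds}\mid_{s=s_0}=0$): we already know breakaway points are stationary points of the gain, and the task here is to show that, for a \emph{real finite} point, stationarity of $K$ is also \emph{sufficient} to force a breakaway. The mechanism is to convert the analytic hypothesis $\frac{dK(s)}{ds}\mid_{s=s_0}=0$ into a statement about the multiplicity of $s_0$ as a root of the characteristic equation evaluated at the fixed gain $K(s_0)$, and then to translate that algebraic multiplicity into the geometric crossing of root loci demanded by the definition of a breakaway point.

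First I would invoke Lemma 2.14: after substituting the gain $K(s_0)$, the left-hand side of the characteristic equation factors as $(s-s_0)^{\gamma_0}g(s)$ with $g(s_0)\neq 0$, $g(s_0)\neq\infty$, and $g^{'}(s_0)\neq\infty$. Lemma 2.16 then supplies, under the hypothesis $\frac{dK(s_0)}{ds}=0$, the identity $((s-s_0)^{\gamma_0}g(s))^{'}\mid_{s=s_0}=0$. Differentiating gives $((s-s_0)^{\gamma_0}g(s))^{'}=\gamma_0(s-s_0)^{\gamma_0-1}g(s)+(s-s_0)^{\gamma_0}g^{'}(s)$, and evaluating at $s=s_0$ shows that the choice $\gamma_0=1$ would make this derivative equal $g(s_0)\neq 0$, contradicting Lemma 2.16. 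Hence $\gamma_0\ge 2$, so $s_0$ is a root of the characteristic equation (at gain $K(s_0)$) of multiplicity at least two.

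The remaining step passes from multiplicity to the defining property of a breakaway point, that at least two root loci intersect at $s_0$. Here I would use Theorem 2.8: along any single root locus the gain $K$ is strictly monotonic, so each root locus meets the gain level $K=K(s_0)$ in at most one point. Therefore each distinct solution of the characteristic equation at gain $K(s_0)$ records exactly one branch of the root loci passing through that point, and a root of multiplicity $\ge 2$ can only be produced by the coincidence at $s_0$ of two (or more) distinct branches, i.e. by at least two root loci intersecting at $s_0$. Appealing to the definition of a breakaway point, and recalling that $s_0$ is real and finite by hypothesis, we conclude that $s_0$ is a real finite breakaway point; the symmetry of the loci about the real axis (Lemma 2.2) is consistent with, and confirms, the real location of the two crossing branches.

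The step I expect to be the main obstacle is precisely this last translation: rigorously certifying that algebraic multiplicity $\ge 2$ of the characteristic-equation root corresponds to two \emph{genuinely distinct} intersecting root loci, rather than to some degenerate configuration such as a single branch meeting itself. The conditions $g(s_0)\neq 0$ from Lemma 2.14 and the strict monotonicity of the gain from Theorem 2.8 are exactly the ingredients that rule out these degeneracies and pin down the two crossing branches, so the care in the proof will lie in assembling them to identify the branches unambiguously rather than in any further computation.
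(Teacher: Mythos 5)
Your proposal follows essentially the same route as the paper's proof: factor the characteristic equation at gain $K(s_0)$ as $(s-s_0)^{\gamma_0}g(s)$ via Lemma 2.14, use Lemma 2.16 to get $((s-s_0)^{\gamma_0}g(s))^{'}\mid_{s=s_0}=0$, rule out $\gamma_0=1$ by the nonvanishing of $g(s_0)$, and conclude that $s_0$ is a multiple root and hence a breakaway point. The only difference is that you supply an explicit counting argument (via the strict monotonicity of the gain in Theorem 2.8) for the final step from algebraic multiplicity to the intersection of two distinct branches, a step the paper simply asserts; this is a welcome tightening rather than a different method.
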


\begin{proof}
We analyse three cases on the basis of $\gamma_0>0$:

1.  Case $\gamma_0=1$:

If $\gamma_0=1$,  $g^{'}(s_0)\neq\infty$, $(s_0-s_0)^{\gamma_0}g^{'}(s_0)
=0$. $g(s_0)\neq 0$. $(g(s_0)+(s_0-s_0)^{\gamma_0}g^{'}(s_0))=g(s_0)\neq 0$.

$((s-s_0)^{\gamma_0}g(s))^{'}\mid{_{s=s_0}}=(g(s_0)+(s_0-s_0)^{\gamma_0}g^{'}(s_0))
=g(s_0)\neq 0$. By Lemma 2.16, $((s-s_0)^{\gamma_0}g(s))^{'}\mid{_{s=s_0}}=0$. Therefore, this contradiction arises. Hence, $\gamma_0=1$ is invalid.

2. Case $1>\gamma_0>0$:

If $1>\gamma_0>0$,  $g^{'}(s_0)\neq\infty$, $(s_0-s_0)^{\gamma_0}g^{'}(s_0)
=0$. $\gamma_0(s_0-s_0)^{\gamma_0-1}=\infty$, $g(s_0)\neq 0$. $((s-s_0)^{\gamma_0}g(s))^{'}\mid{_{s=s_0}}=(\gamma_0(s_0-s_0)^{\gamma_0-1}g(s_0)+(s_0-s_0)^{\gamma_0}g^{'}(s_0))=\infty$.

$((s-s_0)^{\gamma_0}g(s))^{'}\mid{_{s=s_0}}=\infty$.

By Lemma 2.16, $((s-s_0)^{\gamma_0}g(s))^{'}\mid{_{s=s_0}}=0$. Therefore, this contradiction arises. Hence, $1>\gamma_0>0$ is invalid.

3. Case $\gamma_0>1$:

Because $\gamma_0>1$, $((s-s_0)^{\gamma_0}g(s))^{'}\mid{_{s=s_0}}=\gamma_0(s_0-s_0)^{\gamma_0-1}g(s_0)+(s_0-s_0)^{\gamma_0}g^{'}(s_0)=0$ holds. The left-hand side of (2.2) is equal to the right-hand side of (2.2). (2.2) holds. Hence, $\gamma_0>1$ is valid.

By Lemma 2.13, the roots of (1.1) and its characteristic equation are identical.
The left-hand side of the characteristic equation with gain $K(s_0)$ is rewritten as another expression: $(s-s_0)^{\gamma_0}g(s)$. $s_0$ is a root of (1.1).  $\gamma_0$ is a positive integer. Therefore, the condition $\gamma_0>1$ implies that the point $s_0$ is a multiple root of (1.1), and the finite point $s_0$ is the intersection point of at least two root loci on the real axis. The real finite point $s_0$ is a real finite breakaway point of (1.1).

Only $\gamma_0>1$ is valid, confirming that $s_0$ is a real finite breakaway point.
\end{proof}
\begin{lemma}
Let $s_0=(\sigma_0+it_0)\in\Delta $ be a point on the real axis. If $s_0$ is a real finite breakaway point of the root loci of (1.1), then $s_0$ must satisfy (2.3).
\end{lemma}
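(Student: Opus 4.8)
The plan is to reverse the implication chain established in Lemma 2.17, using the characterization of a breakaway point as a multiple root of the characteristic equation together with the identity (2.2) of Lemma 2.15. The key observation is that the numerator of $\frac{dK(s)}{ds}$ evaluated at $s_0$ coincides, up to the common sign $(\pm)|_{s=s_0}$ and the nonzero factor $\prod_{l=1}^m(s_0-z_l)^{\gamma_l}$, with $-TT1(s_0)$; hence it suffices to show $TT1(s_0)=0$.

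First I would record that, by Definition 1.1, a real finite breakaway point $s_0$ is a point where at least two root loci of (1.1) intersect. By Theorem 2.9 two loci carrying distinct unit complex values cannot meet, so every locus through $s_0$ shares the same unit complex value, and therefore the same gain $K(s_0)$ and the same sign $(\pm)|_{s=s_0}$. This lets me form the single characteristic equation with gain $K(s_0)$ that all these loci satisfy at $s_0$.

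Next, applying Lemma 2.14, I would factor the left-hand side of this characteristic equation as $(s-s_0)^{\gamma_0}g(s)$ with $g(s_0)\neq 0$ finite and $g'(s_0)$ finite. The crucial step is to argue that $\gamma_0>1$: because at least two distinct branches pass through $s_0$, and by Theorem 2.8 each branch attains the gain level $K(s_0)$ at exactly one point along itself, the point $s_0$ is a root of multiplicity at least two of the characteristic equation, so $\gamma_0\ge 2$. With $\gamma_0>1$, differentiation gives $((s-s_0)^{\gamma_0}g(s))'=\gamma_0(s-s_0)^{\gamma_0-1}g(s)+(s-s_0)^{\gamma_0}g'(s)$, and both terms vanish at $s_0$ since the exponents $\gamma_0-1$ and $\gamma_0$ are positive while $g(s_0)$ and $g'(s_0)$ are finite. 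Hence $TT2(s_0)=((s-s_0)^{\gamma_0}g(s))'|_{s=s_0}\,\prod_{l=1}^m(s_0-z_l)^{\gamma_l}=0$.

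Finally, the identity (2.2) of Lemma 2.15 yields $TT1(s_0)=TT2(s_0)=0$. Since $s_0$ is not a zero of (1.1), the factor $\prod_{l=1}^m(s_0-z_l)^{\gamma_l}$ is nonzero, so the denominator $\left(\prod_{l=1}^m(s_0-z_l)^{\gamma_l}\right)^2$ of $\frac{dK(s)}{ds}$ does not vanish; as its numerator equals $-TT1(s_0)=0$, I conclude $\frac{dK(s_0)}{ds}=0$, which is exactly (2.3). I expect the main obstacle to be the rigorous justification that the meeting of at least two root loci at $s_0$ forces multiplicity $\gamma_0\ge 2$ in the factorization of Lemma 2.14, i.e. tying the geometric intersection of branches to an algebraic repeated root; once this is secured, the remaining steps reduce to routine differentiation and substitution.
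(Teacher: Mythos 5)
Your proposal is correct and follows essentially the same route as the paper's own proof: factor the characteristic equation at gain $K(s_0)$ via Lemma 2.14, use the breakaway-point hypothesis to conclude $\gamma_0>1$ so that $TT2(s_0)=0$, invoke the identity (2.2) to get $TT1(s_0)=0$, and read off $\frac{dK(s_0)}{ds}=0$ from the non-vanishing of $\prod_{l=1}^m(s_0-z_l)^{\gamma_l}$. Your added justification that the intersecting branches force a repeated root (via Theorems 2.8 and 2.9) is in fact slightly more explicit than the paper's, which simply asserts that a breakaway point is a multiple root of the characteristic equation.
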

\begin{proof}
From (2.2), we obtain:

$(\pm)\mid{_{s=s_0}}\frac{\prod^{n}_{j=1}(s_0-p_{j})^{\beta_{j}}}
{\prod^{m}_{l=1}(s_0-z_{l})^{\gamma_{l}}}
(\prod^{m}_{l=1}((s-z_{l})^{\gamma_{l}}))^{'}-
(\pm)(\prod^{n}_{j=1}((s-p_{j})^{\beta_{j}}))^{'}
=\gamma_0(s-s_0)^{\gamma_0-1}g(s)+(s-s_0)^{\gamma_0}g^{'}(s)$. $g(s_0)\neq\infty$ and  $g^{'}(s_0)\neq\infty$.

Substituting $s_0$ into the right-hand side of the above equation, since $s_0$ is a real finite breakaway point of (1.1), $s_0$ is a multiple root of the characteristic equation. Therefore, $\gamma_0>1$. $\gamma_0(s_0-s_0)^{\gamma_0-1}g(s_0)+(s_0-s_0)^{\gamma_0}g^{'}(s_0)=0$. $TT2(s_0)=0$.
By Lemma 2.15, $TT1(s_0)=0$, we obtain: $(\pm)\mid{_{s=s_0}}(\prod^{n}_{j=1}(s_0-p_{j})^{\beta_{j}}
(\prod^{m}_{l=1}(s-z_{l})^{\gamma_{l}})^{'}\mid{_{s=s_0}}-
\prod^{m}_{l=1}(s_0-z_{l})^{\gamma_{l}}
(\prod^{n}_{j=1}(s-p_{j})^{\beta_{j}})^{'}\mid{_{s=s_0}})
=0$.
Differentiating the gain function and substituting $s_0$ into the derivative, we obtain:

$\frac{dK(s_0)}{ds}
=(\pm)\mid{_{s=s_0}}\frac{(\prod^{n}_{j=1}(s-p_{j})^{\beta_{j}})^{'}\mid{_{s=s_0}}
\prod^{m}_{l=1}(s_0-z_{l})^{\gamma_{l}}-
\prod^{n}_{j=1}(s_0-p_{j})^{\beta_{j}}
}{(\prod^{m}_{l=1}(s_0-z_{l})^{\gamma_{l}})^{2}}$

$\frac{(\prod^{m}_{l=1}((s-z_{l})^{\gamma_{l}}))^{'}\mid{_{s=s_0}}}{}
=0$.
\end{proof}

Lemma 2.17 proves the sufficient condition for breakaway points, whereas Lemma 2.18 proves the necessary condition. Combining these two lemmas, we obtain the following sufficient and necessary conditions for breakaway points.

\begin{theorem}
Let $s_0=(\sigma_0+it_0)\in\Delta $. The real finite point $s_0$ is a real finite breakaway point of the root loci of (1.1) if and only if the real finite point $s_0$ satisfies (2.3).
\end{theorem}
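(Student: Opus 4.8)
The plan is to establish the biconditional of Theorem 2.19 by treating it as the formal conjunction of the two one-directional statements already isolated as Lemma 2.17 and Lemma 2.18. The sufficiency direction — that a real finite point $s_0$ satisfying the stationarity condition (2.3) is a breakaway point — is exactly Lemma 2.17, while the necessity direction — that every real finite breakaway point satisfies (2.3) — is exactly Lemma 2.18. Since Definition 2.1 identifies a breakaway point as a point at which at least two root loci of (1.1) intersect, and since both lemmas quantify over precisely the same class of real finite points $s_0$ under the hypotheses of the theorem, the two implications glue together into the stated equivalence without introducing any auxiliary assumption. First I would state this reduction explicitly, so that the remaining work is only to recall the mechanism inside each lemma.

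For the sufficiency half I would start from (2.3) and invoke Lemma 2.16, which yields simultaneously $TT1(s_0)=0$ and the vanishing of $\big((s-s_0)^{\gamma_0}g(s)\big)'$ at $s_0$, where $(s-s_0)^{\gamma_0}g(s)$ is the factorization of the characteristic polynomial furnished by Lemma 2.14 (with $g(s_0)\neq 0$, $g(s_0)\neq\infty$, $g'(s_0)\neq\infty$). The core of Lemma 2.17 is then a trichotomy on the exponent $\gamma_0$: the value $\gamma_0=1$ is excluded because the factored derivative at $s_0$ reduces to $g(s_0)\neq 0$, and the formal range $0<\gamma_0<1$ is excluded because that derivative blows up, both contradicting the vanishing just obtained; only $\gamma_0>1$ survives. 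By Lemma 2.13 the roots of (1.1) coincide with those of its characteristic equation, so $\gamma_0>1$ means $s_0$ is a multiple root, i.e. at least two root loci on the real axis cross there, which by Definition 2.1 makes $s_0$ a real finite breakaway point.

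For the necessity half I would run this chain in reverse, recording it as Lemma 2.18. If $s_0$ is a breakaway point, then by Definition 2.1 together with Lemma 2.13 it is a multiple root of the characteristic equation, forcing $\gamma_0>1$; consequently $TT2(s_0)=\big((s-s_0)^{\gamma_0}g(s)\big)'\big|_{s=s_0}\,\prod_{l=1}^{m}(s_0-z_l)^{\gamma_l}=0$, and relation (2.2) of Lemma 2.15 transfers this to $TT1(s_0)=0$. Dividing $TT1(s_0)$ by the nonzero factor $\big(\prod_{l=1}^{m}(s_0-z_l)^{\gamma_l}\big)^2$ reconstitutes the numerator of $\frac{dK(s_0)}{ds}$, so $s_0$ satisfies (2.3). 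Assembling the two halves gives the theorem.

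I expect the main obstacle to lie not in this final assembly, which is purely logical, but in the bridge already carried inside the two lemmas: the identification of the analytic stationarity condition (2.3) with the algebraic condition $\gamma_0>1$, and then of $\gamma_0>1$ with the geometric notion of intersecting loci. The delicate point is the exclusion of $\gamma_0=1$ and of the non-integer range $0<\gamma_0<1$ in the $\gamma_0$-trichotomy, since it is precisely there that the derivative of the factored characteristic polynomial either fails to vanish or diverges; that case analysis, combined with the appeal to Lemma 2.13 to legitimize reading off multiplicity as the number of coincident root loci, is where the genuine content of the equivalence resides.
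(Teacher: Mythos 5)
Your proposal is correct and follows the paper's own route exactly: the paper proves Theorem 2.19 by simply combining Lemma 2.17 (sufficiency) with Lemma 2.18 (necessity), and your recapitulation of the internal mechanism of those lemmas --- the factorization $(s-s_0)^{\gamma_0}g(s)$ from Lemma 2.14, the relation $TT1=TT2$ from Lemma 2.15, the trichotomy excluding $\gamma_0=1$ and $0<\gamma_0<1$, and the appeal to Lemma 2.13 and Definition 2.1 to read multiplicity as intersecting loci --- matches the paper's arguments faithfully.
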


When the unit complex values of the root loci are distinct, these root loci cannot intersect. When the unit complex values of the root loci are identical, (under these conditions)the root loci may intersect. The intersection points at real finite locations are real finite breakaway points. These intersecting root loci separate from these real finite breakaway points again. Multiple zeros and poles of the root locus equation cannot exhibit this property. Thus, the real finite breakaway points of the root loci are distinct from the real multiple zeros and poles of (1.1).

On both sides of each non-standard real breakaway point on the real axis, the monotonicity of the gain remains unchanged. On both sides of each standard real breakaway point on the real axis, the monotonicity of the gain must change.

\begin{definition}
The set $\Omega_c$ consists of all real critical points of $RF$, excluding multiple zeros and multiple poles of $RF$.
\end{definition}

\begin{definition}
The set $\Omega_b$ consists of all real breakaway points of the root loci of $RF$, excluding multiple zeros and multiple poles of $RF$.
\end{definition}
\begin{definition}
The set $\Omega_K$ consists of all real critical points of $K(s)$, excluding the multiple zeros and multiple poles of $RF(s)$.
\end{definition}

\begin{theorem}
Take $s=(\sigma+it)\in\Phi$.

1. If the point $cp \in \Omega_c$, then $cp \in \Omega_K$.

2. If the point $gp \in \Omega_K$, then $gp \in \Omega_c$.

3. The point $bp \in \Omega_b$ if and only if $bp \in \Omega_c$ and $bp \in \Omega_K$.
\end{theorem}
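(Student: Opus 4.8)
The plan is to reduce the whole statement to one elementary differential identity on the real axis, together with the breakaway criterion of Theorem 2.19. Since the three sets $\Omega_c$, $\Omega_b$, $\Omega_K$ all consist, by definition, of real points, I may take $s_0$ to be a real point of $\Phi$; being in $\Phi$, the point $s_0$ is neither a zero nor a pole of $RF$, so it is automatically excluded from the multiple zeros and poles that the three definitions discard. First I would record the defining relation $K(s)=\lvert 1/RF(s)\rvert$. On the real axis $RF$ is real-valued, and on a small real neighbourhood of $s_0$ it keeps a constant sign $\epsilon=\pm1$ because $RF(s_0)\neq 0,\infty$; there $K(s)=\epsilon/RF(s)$ is a smooth function of the real variable $s$.

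Differentiating this local expression gives
$$\frac{dK}{ds}=-\epsilon\,\frac{RF'(s)}{RF(s)^2}.$$
Because $RF(s_0)$ is finite and nonzero, the factor $RF(s_0)^2$ is finite and nonzero, so $\frac{dK}{ds}\big|_{s=s_0}=0$ if and only if $RF'(s_0)=0$. Reading this equivalence in both directions settles assertions 1 and 2 at once: $s_0\in\Omega_c$ (that is, $RF'(s_0)=0$) forces $K'(s_0)=0$, i.e.\ $s_0\in\Omega_K$, and conversely $s_0\in\Omega_K$ forces $s_0\in\Omega_c$. In other words, on $\Phi$ the two critical-point conditions for $RF$ and for $K$ coincide.

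For assertion 3 I would invoke Theorem 2.19, which asserts that a real finite point is a real finite breakaway point of the root loci of (1.1) exactly when $\frac{dK}{ds}=0$ there; after discarding multiple zeros and poles this is precisely $bp\in\Omega_b \iff bp\in\Omega_K$. Chaining it with the identity of the previous paragraph yields
$$bp\in\Omega_b \iff K'(bp)=0 \iff RF'(bp)=0 \iff bp\in\Omega_c,$$
and hence $bp\in\Omega_b$ if and only if $bp\in\Omega_c$ and $bp\in\Omega_K$, the two right-hand conditions being equivalent on $\Phi$.

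The only delicate point is the sign bookkeeping in the first step: the gain is built from an absolute value, so $K$ fails to be differentiable exactly at the zeros and poles of $RF$. Restricting attention to $s_0\in\Phi$ removes precisely these bad points and lets me replace $\lvert 1/RF\rvert$ by the smooth local branch $\epsilon/RF$, after which the computation is a one-line quotient-rule application. Everything deeper has already been carried out in Theorem 2.19, so once the local branch is fixed no further obstacle remains.
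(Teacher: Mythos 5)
Your proposal is correct and follows essentially the same route as the paper: the paper likewise shows that away from the (multiple) zeros and poles of $RF$ the derivatives $\frac{dK}{ds}$ and $RF'$ vanish simultaneously (it does this by writing out both quotient-rule numerators and observing they coincide up to sign, which is just a less compact version of your local identity $K=\epsilon/RF$), and then invokes Theorem 2.19 to equate breakaway points with critical points of $K$. No substantive difference in the argument.
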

\begin{proof}
$K(s)=\pm \frac{\prod_{j=1}^{n} (s-p_{j})^{\beta_j}}{\prod_{i=1}^{m} (s-z_{i})^{\gamma_l}}$.

$
\frac{dK(s)}{ds}
=\pm \frac{(\prod^{n}_{j=1}(s-p_{j})^{\beta_j})^{'}
\prod^{m}_{l=1}(s-z_{l})^{\gamma_l}-
\prod^{n}_{j=1}(s-p_{j})^{\beta_j}(\prod^{m}_{l=1}(s-z_{l})^{\gamma_l})^{'}
}{(\prod^{m}_{l=1}(s-z_{l})^{\gamma_l})^{2}}
$

$
(RF(s))^{'}
=\frac{(\prod^{m}_{l=1} (s-z_l)^{\gamma_l})^{'}\prod^{n}_{j=1}  (s-p_j)^{\beta_j}-\prod^{m}_{l=1} (s-z_l)^{\gamma_l}(\prod^{n}_{j=1}  (s-p_j)^{\beta_j})^{'}}{(\prod^{n}_{j=1}  (s-p_j)^{\beta_j})^2}
$

The multiple zeros and multiple poles of $RF(s)$ are zeros of the numerator polynomials of $\frac{dK(s)}{ds}$ and $(RF(s))^{'}$. Specifically:

1. The multiple zeros of $RF(s)$ are zeros of $(RF(s))^{'}$, but poles of $\frac{dK(s)}{ds}$.

2. The multiple poles of $RF(s)$ are zeros of $\frac{dK(s)}{ds}$, but are still poles of $(RF(s))^{'}$.

Therefore, multiple zeros and multiple poles of $RF(s)$ cannot be the critical points of both $RF(s)$ and $K(s)$. Thus, when we study critical points of both $RF(s)$ and $K(s)$, we need to exclude multiple zeros and multiple poles of $RF(s)$.

Excluding the multiple zeros and multiple poles of $RF(s)$, the zeros of $\frac{dK(s)}{ds}$ and $\frac{dRF(s)}{ds}$ are identical. If $cp \in \Omega_c$, then $cp \in \Omega_K$. Conversely, if $gp \in \Omega_K$, then $gp \in \Omega_c$.

By Lemma 2.19, all real finite breakaway points of (1.1) are real finite critical points of $K(s)$ and $RF(s)$. If $bp \in \Omega_b$, then $bp \in \Omega_c$ and $bp \in \Omega_K$. Conversely,  all real finite critical points of $K(s)$ and $RF(s)$ are real finite breakaway points of (1.1). If $bp \in \Omega_c$ and $bp \in \Omega_K$, then $bp \in \Omega_b$.
\end{proof}

\begin{theorem}
No two segments of root loci of any degree of (1.1)  overlap in the finite region of $\mathbb{C}\cup\{\infty\}$.
\end{theorem}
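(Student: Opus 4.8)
The plan is to argue by contradiction: an overlap would force infinitely many breakaway (critical) points to lie along a positive-length arc, whereas these points are the zeros of a fixed nonzero polynomial and hence finite. First I would dispose of overlaps between loci of \emph{different} degrees. By Theorem 2.9 two root loci carrying the distinct unit complex values $+1$ and $-1$ cannot even intersect in $\mathbb{C}\cup\{\infty\}$, so a fortiori they cannot share a segment. Thus it suffices to rule out an overlap of two distinct root loci of the \emph{same} degree, and everything below is carried out for a fixed sign in (1.1).

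Next I would suppose, for contradiction, that two distinct branches $L_1$ and $L_2$ of the same degree coincide along a segment $\gamma$ of positive length. By Lemma 2.13 the solutions of (1.1) at a fixed gain $K=k$ are exactly the roots of the characteristic polynomial $P_k(s)=k\prod_l(s-z_l)^{\gamma_l}\mp\prod_j(s-p_j)^{\beta_j}$, a polynomial of fixed degree $N$ with exactly $N$ roots counted with multiplicity. By Theorem 2.8 and Lemma 2.7 the gain is strictly monotone along each branch, so each branch carries exactly one point of gain $k$; the $N$ branch representatives at gain $k$ therefore exhaust the $N$ roots of $P_k$, one per branch. Now I fix $s_0\in\gamma$ and set $k_0=K(s_0)$. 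Since $s_0$ lies on both $L_1$ and $L_2$, it is simultaneously the gain-$k_0$ representative of two distinct branches, hence it is counted at least twice among the roots of $P_{k_0}$, so $s_0$ is a multiple root of $P_{k_0}$.

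The final step is to convert ``multiple root'' into ``critical point'' and invoke finiteness. A multiple root satisfies $P_{k_0}(s_0)=0$ and $P_{k_0}'(s_0)=0$; eliminating $k_0$ between these two relations yields $\sum_l \gamma_l/(s_0-z_l)=\sum_j\beta_j/(s_0-p_j)$, i.e. $RF'(s_0)=0$. Equivalently, by Lemma 1.4 and Theorem 2.19 every such $s_0$ is a breakaway point satisfying $dK/ds|_{s=s_0}=0$. In either description $s_0$ is a zero of the numerator of $RF'$ (equivalently of $dK/ds$), a fixed nonzero polynomial, which has only finitely many zeros. Thus every point of $\gamma$ would be one of these finitely many critical points in the finite region of $\mathbb{C}\cup\{\infty\}$, contradicting the fact that $\gamma$ contains infinitely many points. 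Hence no overlap occurs.

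The hard part will be the multiplicity bookkeeping in the second step: I must argue that the mere coincidence of two branches along $\gamma$ genuinely produces a \emph{multiple} root of $P_{k_0}$, rather than two branches that are locally the same arc and so contribute only a single root. I expect to resolve this precisely by leaning on the exact count ``one representative per branch'' supplied by Theorem 2.8 and Lemma 2.7, which makes the counting unambiguous, and by performing the elimination of the third step uniformly for $s_0$ anywhere in $\mathbb{C}$ (on or off the real axis). Carrying it out in the full plane, rather than only at real points where Theorem 2.19 is stated, is what lets the finiteness of $\{RF'=0\}$ apply to the entire segment $\gamma$.
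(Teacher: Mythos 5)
Your proposal is correct and follows essentially the same route as the paper: both reduce an overlapping segment to an infinite, non-isolated set of breakaway/critical points and then contradict the finiteness of the zero set of $RF'$ (equivalently of $dK/ds$), which is a fixed rational function. You supply more detail than the paper at the step it glosses over --- namely that every point of an overlap is genuinely a multiple root of the characteristic equation and hence a zero of $RF'$, via the one-representative-per-branch count --- and you also dispose of the different-degree case explicitly via Theorem 2.9, but the core argument is the same.
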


\begin{proof}
The finite breakaway point is the intersection point of the root loci of the root locus equation (1.1). An overlapping segment on the real axis would consist of a continuous set of intersection points of the root locus of (1.1). If such overlapping segments exist, they would be special intersection points, specifically an infinite number of continuous intersection points. To compute the finite breakaway points and prove their existence, we use the formula for computing finite breakaway points in Lemma 2.4. Lemma 2.4 provides a necessary  condition for computing the finite breakaway point:

$\frac{dK(s)}{ds}
=\pm \frac{(\prod^{n}_{j=1}(s-p_{j})^{\beta_{j}})^{'}
\prod^{m}_{l=1}(s-z_{l})^{\gamma_{l}}-
\prod^{n}_{j=1}(s-p_{j})^{\beta_{j}}(\prod^{m}_{l=1}(s-z_{l})^{\gamma_{l}})^{'}
}{(\prod^{m}_{l=1}(s-z_{l})^{\gamma_{l}})^{2}}=0.$

All finite breakaway points are finite zeros of the derivative of the function $RF(s)$. The derivative of $RF(s)$ is a rational function that has a finite number of finite zeros. These finite zeros must be isolated in $\mathbb{C}\cup\{\infty\}$; they cannot be continuously distributed. Therefore, these finite zeros cannot form a continuous curve. On the basis of the previous proof, any segment of all the root loci of an arbitrary degree number cannot overlap.
\end{proof}

\section{Setting cases when Shapiro's Conjecture 12 holds}
All lemmas and theorems in this section are original to the authors and do not rely on prior literature.

In the following proof, the concept of the root locus extension is essential. Since the gain values of the root locus of (1.1) increase strictly monotonically from $0$ at the poles to positive infinity at the zeros, the root locus extension of (1.1) implies that the root locus moves from points with small gains to points with large gains. Alternatively, the root locus extension means that a root locus moves from a point closer to a pole to a point closer to a zero.

The root loci originating from $\mathbb{C}^{*}$ may intersect at the real breakaway points on the real axis. However, these root loci do not enter the real axis; instead, they immediately return to $\mathbb{C}^{*}$. Therefore, these root loci in $\mathbb{C}^{*}$ do not affect the root loci on the real axis. The root loci on the real axis extend continuously  across such real breakaway points. When the root loci on the real axis extend from one side of the real breakaway point to the other side, the gains of the root loci on the real axis are continuous and monotonic. We refer to these points as non-standard real breakaway points. In subsequent proofs, discussions of real breakaway points and their intersections on the real axis will exclude these non-standard real breakaway points and these intersection root loci.

All other real breakaway points are called standard real breakaway points. Their defining property is that two root loci exist on either side of the real breakaway point, with opposite extension directions:

1. Both two root loci either extend toward the real breakaway point, or

2. Both two root loci extend away from the real breakaway point.

This paper focuses on the real roots of polynomial $\Delta$. Accordingly, in this section, our study is restricted to the $2q\pi$-degree($q\in Z$) root loci of (1.1); root loci of other degrees are not considered.

\begin{lemma}
On the root loci of $2q\pi$ degree of (1.1), the standard real breakaway points of the root loci of the $2q\pi$ degree of (1.1) are the extreme points of the gain of (1.1).
\end{lemma}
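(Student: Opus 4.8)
The plan is to deduce the extremal property directly from the strict monotonicity of the gain along each individual root locus (Theorem 2.8) together with the defining two-sided structure of a standard real breakaway point. First I would recall that, by Theorem 2.19, any real finite breakaway point $s_0$ of (1.1) necessarily satisfies $\frac{dK(s)}{ds}\big|_{s=s_0}=0$, so $s_0$ is at least a critical point of the gain. The substance of the lemma is upgrading ``critical point'' to ``extreme point,'' and for this the infinitesimal condition $dK/ds=0$ alone is insufficient (it does not by itself exclude an inflection), so I would argue from the global monotone behaviour of $K$ on the real-axis branches meeting at $s_0$.

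Next I would invoke the definition of a standard real breakaway point: two \emph{distinct} $2q\pi$-degree root loci lie on the real axis, one on each side of $s_0$, and at $s_0$ they break away from the real axis into $\mathbb{C}^{*}$. By the root-locus extension convention stated at the opening of this section, a root locus is traversed from small gain (at a pole) to large gain (at a zero), so ``extension direction'' is synonymous with ``direction of increasing $K$.'' Because $s_0$ is standard, the two branches either both extend toward $s_0$ or both extend away from $s_0$; combined with Theorem 2.8, which guarantees that $K$ is strictly monotone along each branch, this fixes the one-sided behaviour of $K$ on the real axis near $s_0$.

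The core of the argument is a short case analysis. If both branches extend toward $s_0$, then along each real-axis segment $K$ strictly increases as $s\to s_0$, whence $K(s)<K(s_0)$ for all real $s$ near $s_0$ on either side, and $s_0$ is a strict local maximum of $K$. If instead both branches extend away from $s_0$, then $K$ strictly increases as $s$ moves away from $s_0$ on either side, so $K(s)>K(s_0)$ near $s_0$, and $s_0$ is a strict local minimum. In both cases $s_0$ is an extreme point of the gain, which is the assertion of the lemma (and which moreover separates the maximum and minimum real breakaway points of Definitions 1.7 and 1.8). For contrast, and to justify why ``standard'' is the correct hypothesis, I would note that at a non-standard breakaway point only a single root locus occupies the real axis and passes through $s_0$, so $K$ is monotone across $s_0$, its monotonicity does not reverse, and no extremum occurs.

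The step I expect to require the most care is the identification of ``extension direction'' with the sign of $dK/ds$ on the real-axis segments, together with verifying that the two branches meeting at a standard point genuinely sit on opposite sides of $s_0$ with a \emph{common} orientation relative to $s_0$ (both toward or both away). This is precisely where the distinction between Definitions 1.2 and 1.3 does the work: the ``two distinct root loci, one per side'' structure, read through the strict monotonicity of Theorem 2.8, forces the monotonicity of $K$ to reverse across $s_0$, whereas the single-locus passage at a non-standard point does not. Making this orientation bookkeeping rigorous, rather than appealing informally to the root-locus picture, is the main obstacle, and I would handle it by phrasing every assertion in terms of the strict monotonicity of $K$ along each branch furnished by Theorem 2.8.
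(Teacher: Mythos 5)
Your proposal is correct and follows essentially the same route as the paper's own proof: both arguments identify the root-locus extension direction with strictly increasing gain (Theorem 2.8) and then run the same two-case analysis, with both branches extending toward the point yielding a maximum and both extending away yielding a minimum. The only cosmetic difference is that the paper invokes Theorem 2.24 (non-overlapping of root loci) to force the two branches into opposite directions along the real axis, whereas you read that orientation structure directly off the definition of a standard real breakaway point.
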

\begin{proof}
Since the $2q\pi$-degree root loci of (1.1) are symmetric about the real axis, if these root loci intersect at a standard real breakaway point $b_1$ from $\mathbb{C}^{*}$, the two root loci depart from the real breakaway point $b_1$ on the real axis. By Theorem 2.24, the two root loci cannot extend in the same direction, and they must extend in opposite directions along the real axis.

Upon departure from the standard real breakaway point $b_1$, on the root loci which extend in opposite directions along the real axis, by the definition of  the root locus extension, the monotonicity of the gain $K(x)$ of $2q\pi$-degree root loci of (1.1) must be opposite. Therefore, the standard real breakaway point $b_1$ serves as a boundary point where the gain's monotonicity changes, implying that the gain $K(x)$ of (1.1) attains an extremum at the boundary point where the gain's monotonicity is opposite.

Similarly, when two root loci on the real axis extend toward each other, intersect at a standard real breakaway point $b_2$, and then leave the real axis to enter $\mathbb{C}^{*}$, by the definition of  the root locus extension, the gain's monotonicity on either side of the standard real breakaway point is again opposite. This kind of standard real breakaway point is also a boundary where the gain's monotonicity reverses. At the boundary point where the gain's monotonicity is opposite, the gain of (1.1) reaches an extremum there.

Thus, each standard real breakaway point is an extremum of the gain of (1.1).
\end{proof}
By Theorem 2.23, at each standard real breakaway point $b_i$ of (1.1), $\frac{dK(s)}{ds}\mid_{s=b_i}=0$. $i=1,2$. It implies that $b_i$ are the extreme points of the gain function of (1.1). On the root loci of $2q\pi$ degree, the gain function of (1.1) is same as gain of (1.1).

The gain expression of (1.2) is given by: $K=\abs{\frac{(p^{'})^2}{p^{''}p}}$. Substituting $s=\pm\infty$ into the gain expression, we obtain $K_{\pm\infty}=\abs{\frac{(p^ {'}(\pm\infty))^2}{p^{''}(\pm\infty)p(\pm\infty)}}=\frac{n}{n-1}$. Thus, $K_{\pm\infty}=K_0$.

\begin{lemma}
The gains at the two infinity points of the real axis are given by $K_{\pm\infty}=\frac{n}{n-1}$. The two infinity points of the real axis are roots of $\Delta$.
\end{lemma}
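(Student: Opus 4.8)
The plan is to treat the two assertions in turn, using only the leading-order behaviour of $p,p',p''$ together with the root-locus principles established above. For the gain at infinity I would substitute the top-degree terms $p\sim s^{n}$, $p'\sim n s^{n-1}$, $p''\sim n(n-1)s^{n-2}$ into the gain expression $K=\bigl|\tfrac{(p')^{2}}{p''p}\bigr|$. This gives $(p')^{2}\sim n^{2}s^{2n-2}$ and $p''p\sim n(n-1)s^{2n-2}$, so as $s\to\pm\infty$ the quotient tends to $\tfrac{n^{2}}{n(n-1)}=\tfrac{n}{n-1}$; since this limit is already positive (as $n\ge 2$), taking absolute values leaves it unchanged and yields $K_{\pm\infty}=\tfrac{n}{n-1}=K_0$, which is the first claim and reproduces the computation immediately preceding the lemma.

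For the second assertion I would first pin down the degree of locus on which the infinity points sit. Because $PP=\tfrac{p''p}{(p')^{2}}\to\tfrac{n(n-1)}{n^{2}}=\tfrac{n-1}{n}>0$ as $s\to\pm\infty$, the argument of $PP$ tends to $2q\pi$ (with $q=0$), so both infinity points lie on a $2q\pi$-degree root locus of (1.2), where indeed $K\cdot PP=\tfrac{n}{n-1}\cdot\tfrac{n-1}{n}=1$ confirms the gain value $K_0$. Invoking the principle already recorded in the excerpt---that a point carrying gain $K=K_0$ on a $2q\pi$-degree root locus of (1.2) is a root of $\Delta$---the two infinity points are roots of $\Delta$.

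To make the phrase \emph{root at infinity} rigorous for the polynomial $\Delta$, I would verify the degree drop algebraically. The $2q\pi$-degree characteristic equation at gain $K$ is $K\,p''p-(p')^{2}=0$, and at $K=K_0$ this reads $-\tfrac{1}{n-1}\Delta=0$, so $\Delta=0$ is exactly that characteristic equation at $K_0$. Both $(p')^{2}$ and $p''p$ have degree $2n-2$, with leading coefficients $n^{2}$ and $n(n-1)$; the degree-$(2n-2)$ coefficient of $K\,p''p-(p')^{2}$ is $n\,[K(n-1)-n]$, which vanishes precisely at $K=K_0$. A short expansion shows the degree-$(2n-3)$ coefficient cancels as well, so $\deg\Delta\le 2n-4$. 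Consequently, as $K\to K_0$ at least two roots of the $2q\pi$-degree characteristic family escape to infinity; equivalently, the limiting equation $\Delta=0$ has at least two fewer finite roots than the ambient degree $2n-2$. These two escaping roots are the two infinity points $s\to+\infty$ and $s\to-\infty$ of the real axis, which are therefore roots of $\Delta$.

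The main obstacle is the rigour of the second claim: read literally, ``$\pm\infty$ are roots of a polynomial'' is meaningful only projectively. The delicate step is recognising that the cancellation of the two top coefficients of $\Delta$ is forced exactly by the value $K=K_0$, and that this algebraic degree drop is the precise shadow of the geometric fact that the $2q\pi$ locus of (1.2) reaches infinity with gain $K_0$. Linking the degree deficiency to the root-locus statement---so that ``gain $K_0$ at infinity on the $2q\pi$ locus'' genuinely certifies roots of $\Delta$---is the crux of the argument.
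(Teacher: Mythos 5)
Your proposal is correct and takes essentially the same route as the paper: the paper likewise obtains $K_{\pm\infty}=\tfrac{n}{n-1}=K_0$ by substituting the leading terms of $p,p',p''$ into $K=\bigl|\tfrac{(p')^{2}}{p''p}\bigr|$, and then declares $\pm\infty$ to be roots of $\Delta$ via the stated principle that gain-$K_0$ points on the $2q\pi$-degree locus of (1.2) are roots of $\Delta$. Your additional algebraic check that the two top coefficients of $\Delta=(n-1)(p')^{2}-npp''$ cancel (so $\deg\Delta\le 2n-4$) is a correct and welcome rigorization of the informal ``root at infinity'' statement, which the paper itself does not spell out.
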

For any polynomial $p$, its first derivative $p^{'}$, and its second derivative $p^{''}$, the two points at infinity on the real axis are roots of  $\Delta$. In this case, Shapiro's conjecture 12 must always hold, making the study of this conjecture trivial. Therefore, the roots at infinity are not the roots of the polynomial $\Delta$ that Shapiro's conjecture 12 intends to study. In this paper, we exclude the roots at infinity of $\Delta$.
\begin{lemma}
When $PP\in \Gamma_{11}\cup\Gamma_{121}\cup\Gamma_{122}$, then $K(x)$ is continuous on the entire real axis. The real axis consists of  $2q\pi$-degree root loci.
\end{lemma}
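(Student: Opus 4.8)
The plan is to notice that the hypothesis $PP\in\Gamma_{11}\cup\Gamma_{121}\cup\Gamma_{122}$ is exactly $PP\in\Gamma_1$, and to reduce both assertions to elementary sign information about $p$ and $p''$ on $\mathbb{R}$. By the definitions of $\Gamma=\Lambda_{23}$ and of $\Gamma_1$, the polynomial $p$ has no real zeros, $p'$ has a single simple real zero $p_0$, and $p''$ has no real zeros. Since $p(s)=s^n+a_1s^{n-1}+\cdots+a_n$ is monic of even degree with no real zeros, $p(x)>0$ for all real $x$; and since $p''$ has even degree $n-2$, positive leading coefficient $n(n-1)$, and no real zeros, also $p''(x)>0$ for all real $x$. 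Hence $p''(x)p(x)>0$ on the whole real axis, so this product never vanishes there.

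For the continuity statement, recall that the gain of (1.2) is $K(x)=\bigl|(p'(x))^2/(p''(x)p(x))\bigr|$. The numerator $(p')^2$ is a polynomial, hence continuous, while by the previous step the denominator $p''p$ is a polynomial with no real zero; a quotient of continuous functions with nonvanishing denominator is continuous, so $K$ is continuous on all of $\mathbb{R}$ (at $x=p_0$ one simply has $K(p_0)=0$, consistent with continuity).

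For the claim that the entire real axis is a $2q\pi$-degree root locus, I would argue through the sign of $RF(x)=p''(x)p(x)/(p'(x))^2$: on $\mathbb{R}$ we have $(p')^2\ge0$, so the sign of $RF$ agrees with the sign of $p''p$, which is positive everywhere except at $p_0$, where $(p')^2=0$ forces a pole with $RF\to+\infty$. Thus every real $x\neq p_0$ satisfies $K\cdot RF=+1$ with $K=1/RF>0$, the defining equation of the $2q\pi$-degree locus, and together with Lemma 2.11 this covers the whole real axis; the parity criterion (Lemma 2.3) yields the same conclusion, since the number of real poles and zeros of $RF$ lying to the right of any interval disjoint from $p_0$ is either $0$ or $2$. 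The one step that needs care — and the main potential pitfall — is the multiplicity of $p_0$: because $p'$ has a \emph{simple} zero there, $(p')^2$ has a \emph{double} zero, so $p_0$ is a pole of $RF$ of multiplicity two. It is precisely this factor of two that keeps the count to the left of $p_0$ even; treating $p_0$ as a simple pole would wrongly predict a $2q\pi+\pi$ locus on $(-\infty,p_0)$, contradicting the direct computation that $RF>0$ there.
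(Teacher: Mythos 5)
Your proposal is correct and follows essentially the same route as the paper's proof: continuity of $K$ comes from the denominator $p''p$ having no real zeros, and the $2q\pi$-degree claim comes from the fact that the only real singularity of $PP$ is the \emph{second-order} pole at $p_0$, so the parity count of Lemma 2.3 is even on every real interval. Your explicit sign computation ($p>0$, $p''>0$, hence $RF>0$ off $p_0$) is a slightly more self-contained way of reaching the same conclusion the paper draws directly from the pole being of order two.
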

\begin{proof}
When $PP\in \Gamma_{11}\cup\Gamma_{121}\cup\Gamma_{122}$, the polynomial $p$ has no real zeros, and $p^{''}$ has no real zeros. The gain function of (1.2) is given by: $K(x)=\frac{(p^{'}(x))^2}{p^{''}(x)p(x)}$. $K(x)$ has no real poles. $K(x)$ is continuous on the entire real axis. (1.2) has no real zeros. $p^{'}$ has only one simple real zero $p_0$.  (1.2) has only one second-order real pole $p_0$. Therefore, the real axis consists of  $2q\pi$-degree root loci. In which there exist $2q\pi$-degree two root locus originating from $p_0$.
\end{proof}

$p^{''}$ has no real zeros. The gains strictly and monotonically increase from $K(p_0)=0$ to $K_{\pm\infty} = \frac{n}{n-1}$.
$K(x)<K_0$, for $x\in(-\infty, p_0)$ or $x\in(p_0, +\infty)$. Hence no real roots of $\Delta$.

\begin{lemma}
When $PP\in \Gamma_{11}$, then $\sharp_r \Delta=0$.
\end{lemma}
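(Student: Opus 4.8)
The plan is to translate ``real root of $\Delta$'' into a condition on the gain along the $2q\pi$-degree root loci of (1.2), and then to rule that condition out using strict monotonicity of the gain. First I would record the exact correspondence already implicit in the setup: a finite real point $x$ is a root of $\Delta$ if and only if $(n-1)(p'(x))^2 = n\,p(x)p''(x)$, equivalently $\frac{p''(x)p(x)}{(p'(x))^2} = \frac{n-1}{n} = \frac{1}{K_0} > 0$. Positivity forces the argument of $PP$ to be $2q\pi$, so $x$ lies on a $2q\pi$-degree root locus, and there the gain equals $K(x) = \left|\frac{(p'(x))^2}{p''(x)p(x)}\right| = \frac{n}{n-1} = K_0$. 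Conversely, any point on a $2q\pi$-degree root locus with gain $K_0$ is a root of $\Delta$. Thus the finite real roots of $\Delta$ are precisely the finite real points lying on $2q\pi$-degree root loci at which the gain attains the value $K_0$.

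Second, I would invoke the structure supplied by the preceding Lemma~3.3. Since $PP\in\Gamma_{11}\subseteq\Gamma_1\subseteq\Gamma$, the polynomial $p$ has no real zeros, $p'$ has the single simple real zero $p_0$, and $p''$ has no real zeros; hence $PP$ has no real zero and exactly one real pole, the double pole $p_0$. By that lemma $K(x)$ is continuous on all of $\mathbb{R}$ and the whole real axis is a $2q\pi$-degree root locus carrying the two branches that emanate from $p_0$. Because there is no real zero of $PP$ at which a finite branch could terminate, these two branches must be the half-lines $(-\infty,p_0)$ and $(p_0,+\infty)$, each running from the pole $p_0$ out to infinity.

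Third, I would pin down the gain on each branch. By Theorem~2.8 the gain increases strictly monotonically along a root locus as one moves from a pole toward a zero; here each branch runs from the pole $p_0$, where $K=0$, out to infinity, where $K_{\pm\infty}=\frac{n}{n-1}=K_0$ by Lemma~3.2. The only mechanism that could interrupt this monotonicity on the real axis is a standard real breakaway point, and the defining property of $\Gamma_{11}$ is the absence of such points (non-standard breakaway points leave the monotonicity of the gain unchanged, as noted after Theorem~2.23). Therefore on each open half-line the gain climbs strictly from $0$ up to $K_0$, staying in $(0,K_0)$ and attaining $K_0$ only in the limit at the excluded point at infinity. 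Consequently no finite real point carries gain $K_0$, so by the correspondence of the first step $\Delta$ has no finite real zero, giving $\sharp_r\Delta=0$.

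The main obstacle I anticipate lies in the second and third steps together: one must be certain that the two branches issuing from the double pole $p_0$ genuinely sweep out the entire half-lines without any hidden finite turning point, and that the gain is monotone across the whole of each half-line rather than merely piecewise. Both points rest on using the root-locus structural results correctly---Lemma~3.3 for the global $2q\pi$-degree structure of the real axis, Theorem~2.8 for strict monotonicity along each branch, and the $\Gamma_{11}$ hypothesis to forbid any reversal of the gain's monotonicity on the real axis. Once these are secured the conclusion is immediate, and the only fine point remaining is the bookkeeping convention, already fixed in the text, that the roots of $\Delta$ at infinity are excluded.
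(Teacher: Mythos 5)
Your proposal is correct and follows essentially the same route as the paper: both arguments identify the two half-lines $(-\infty,p_0)$ and $(p_0,+\infty)$ as $2q\pi$-degree root loci emanating from the double pole $p_0$, invoke Theorem~2.8 for strict monotonicity of the gain from $K(p_0)=0$ to $K_{\pm\infty}=K_0$, and use the absence of standard real breakaway points (the defining property of $\Gamma_{11}$) to conclude that $K(x)<K_0$ at every finite real point, so $\Delta$ has no real root. Your added first step, making explicit the equivalence between real roots of $\Delta$ and finite points of gain $K_0$ on $2q\pi$-degree loci, is a useful clarification of what the paper leaves implicit in the discussion following equation~(1.2), but it does not change the substance of the argument.
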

\begin{proof}
When $PP\in \Gamma_{11}$, by Lemma 2.3, the interval $(p_0, +\infty)$ is a part of the complete root locus of $2q\pi$ degree, originating from $p_0$. The interval $(-\infty, p_0)$ is a part of the complete root locus of $2q\pi$ degree, originating from $p_0$. Since there is no standard real breakaway point of the root loci of (1.2) on the real axis.
By Theorem 2.8: On each root locus of (1.2), when the point $s$ moves from the poles of (1.2) to the zeros of (1.2), then gains are strictly and monotonically increasing.

At the positive and negative infinity points $\pm\infty$ on the real axis, the gain attains its maximum value, $K_{\pm\infty}=\frac{n}{n-1}$. The gains  strictly and monotonically increase from $K(p_0)=0$ to $K_{\pm\infty} = \frac{n}{n-1}$.

Therefore, in the two infinite intervals $(-\infty, p_0)$ and $(p_0, +\infty)$, the gain at every point is strictly less than $K_0$. $K(x)<K_0$, for $x\in(-\infty, p_0)$ or $x\in(p_0, +\infty)$.

This implies that there is no point in these two infinite intervals where the gain equals $K_0$, and hence no real roots of  $\Delta$. We conclude that if $PP\in \Gamma_{11}$, then $\sharp_r \Delta=0$.
\end{proof}
\begin{lemma}
When $PP\in \Gamma_{121}\cup\Gamma_{122}$, then (1.2) must have maximum real breakaway points.
\end{lemma}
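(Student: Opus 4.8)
The plan is to reduce the statement to an elementary monotonicity argument for the real gain function $K(x)=\frac{(p'(x))^2}{p''(x)p(x)}$, anchored at the unique real pole $p_0$, and then to show that the standard breakaway point nearest to $p_0$ is forced to be a maximum. First I would collect the structural facts supplied by Lemma 3.3: since $\Gamma_{121}\cup\Gamma_{122}\subseteq\Gamma_{12}\subseteq\Gamma_1$, both $p$ and $p''$ have no real zeros, so $K$ is continuous on all of $\mathbb{R}$, the entire real axis is a $2q\pi$-degree root locus, and $p_0$ is the only real pole of (1.2), of order two. Because $K$ is defined as an absolute value it satisfies $K(x)\ge 0$ everywhere, and $K(p_0)=0$ since $p'(p_0)=0$ while $p''(p_0)p(p_0)\neq 0$; as $p_0$ is the unique real zero of $p'$, it is in fact the unique real zero of $K$ and hence the strict global minimum of $K$ on $\mathbb{R}$. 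By Lemma 3.2 the two ends give $K(\pm\infty)=K_0=\frac{n}{n-1}$.

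Next I would fix the direction of monotonicity adjacent to $p_0$. Since $p_0$ is the strict global minimum with $K(p_0)=0<K(x)$ for every real $x\neq p_0$, the gain is increasing as one moves away from $p_0$ on either side; this is also the content of the root locus extension principle together with Theorem 2.8, the gain rising from $0$ at the pole. I would then recall the two facts that drive the argument: by Theorem 2.23 the real breakaway points are exactly the critical points of $K$, and, by the monotonicity characterization quoted before Lemma 3.1 together with Lemma 3.1 itself, crossing a standard breakaway point reverses the monotonicity of $K$ (it is a genuine local extremum), whereas crossing a non-standard breakaway point leaves the monotonicity unchanged.

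Now I would invoke the hypothesis $PP\in\Gamma_{12}$, which guarantees at least one standard real breakaway point; such a point lies either in $(p_0,+\infty)$ or in $(-\infty,p_0)$, since it cannot be $p_0$ (breakaway points exclude the double pole). Working on whichever half-line contains a standard breakaway point, and using that the breakaway points are finite in number and isolated (being zeros of the polynomial numerator of $dK/ds$, as noted in the proof of Theorem 2.24), I would let $b$ be the standard breakaway point closest to $p_0$ on that side. On the open interval between $p_0$ and $b$ there are no standard breakaway points, so any breakaway points there are non-standard and preserve the monotonicity of $K$; since $K$ starts increasing away from $p_0$, it is increasing on this entire interval. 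At $b$ the monotonicity must reverse because $b$ is standard, so $K$ attains a local maximum at $b$. Hence $b$ is a maximum real breakaway point, which proves the lemma.

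The main obstacle, I expect, is justifying cleanly that the \emph{first} standard breakaway point encountered when leaving $p_0$ is a maximum rather than a minimum; this rests on correctly combining the initial increasing trend (guaranteed by the strict global minimum $K(p_0)=0$) with the dichotomy that only standard breakaway points reverse monotonicity while non-standard ones preserve it. Once this initial trend and the monotonicity dichotomy are firmly in place, the conclusion that the nearest standard breakaway is a local maximum is immediate, and the argument is symmetric, so it applies regardless of which side of $p_0$ the guaranteed standard breakaway point lies on.
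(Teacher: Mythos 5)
Your proof is correct, but it is organized differently from the paper's. The paper starts from the guaranteed standard breakaway point and argues topologically: root loci from $\mathbb{C}^{*}$ enter the real axis at a standard breakaway point $b_{11}$, which is shown to be a \emph{minimum}; since (1.2) has no real zeros these loci cannot terminate on the real axis, so one of them must collide with the locus emitted from $p_0$ and leave the axis at a second standard breakaway point $b_{12}$, where the two loci approach each other and the gain therefore peaks --- that collision point is the required maximum. You instead anchor everything at $p_0$: $K(p_0)=0$ is the strict global minimum, $K$ increases along the locus leaving $p_0$ (Theorem 2.8), monotonicity can only reverse at standard breakaway points, and the breakaway points are finitely many and isolated, so the standard breakaway point nearest to $p_0$ on the relevant side must be a local maximum of $K$. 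Your route is more elementary in that it never needs the claim that entering loci must subsequently leave the real axis (nor the auxiliary construction of the minimum point $b_{11}$), replacing that topological step with a ``first critical point after a strict minimum'' argument; what it buys is a shorter and cleaner justification, at the cost of not exhibiting the accompanying minimum breakaway point that the paper's construction produces along the way. Both arguments rest on the same two pillars --- strict monotonicity of the gain along each root locus and the dichotomy that only standard breakaway points reverse the monotonicity of $K$ --- so the conclusion is equally well supported either way.
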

\begin{proof}
When $PP\in \Gamma_{121}\cup\Gamma_{122}$, the polynomial $p$ has no real zeros, and $p^{''}$ also has no real zeros. Since $p^{'}$ has only one simple real zero $p_0$, (1.2) has a single second-order real pole at $p_0$. The real axis consists of $2q\pi$-degree root loci. In which there exist $2q\pi$-degree two root locus both originating from $p_0$.

According to the requirement that $\Gamma_{121}\cup\Gamma_{122}$ satisfies, (1.2) has a standard real breakaway point, so there must exist root loci in $\mathbb{C}^{*}$ that enter the real axis. Given that the $2q\pi$-degree root loci of (1.2) are symmetric about the real axis, at least two root loci in $\mathbb{C}^{*}$ must first intersect at a standard real breakaway point $b_{11}$ on the real axis. These two root loci, denoted as $L_{11}$ and $L_{12}$, enter the real axis and then separate. $L_{11}$ and $L_{12}$ are two distinct root loci, by Theorem 2.24, they cannot overlap. $L_{11}$ and $L_{12}$ cannot extend in the same direction. Therefore, $L_{11}$ and $L_{12}$ must extend in opposite directions along the real axis.

Since (1.2) has no zeros on the real axis, neither $L_{11}$ nor $L_{12}$ can end at zeros of (1.2). Instead, they must leave their respective intervals on the real axis. To do so, $L_{11}$ and $L_{12}$ must intersect with another root loci before departing from the real axis and re-entering $\mathbb{C}^{*}$. Because at least one root locus originates from $p_0$ on the real axis, either $L_{11}$ or $L_{12}$ must intersect with another root locus, which we denote as $J_{11}$. This intersection generates another standard real breakaway point $b_{12}$.

On either side of the standard real breakaway point $b_{11}$, the root loci $L_{11}$ and $L_{12}$ extend in opposite directions. One of them must extend in the positive direction of the real axis. Without loss of generality, assume $L_{11}$ extends in the positive direction of the real axis, departing from $b_{11}$. Along  $L_{11}$, the gains strictly monotonically increase. The other must extend in the negative direction of the real axis. $L_{12}$ extends in the negative direction of the real axis, departing from $b_{11}$, and the gain strictly monotonically decreases along $L_{12}$. Thus, to the right of $b_{11}$, the gain monotonically increases, while to the left, it monotonically decreases. Therefore, the gain attains its minimum value at $b_{11}$.

Now, let $J_{12}$ be the root locus that intersects with $J_{11}$ at the standard real breakaway point $b_{12}$. Here, $J_{12}$ is either $L_{11}$ or $L_{12}$. $J_{11}$ and $J_{12}$ extend toward each other and intersect at $b_{12}$. One of them must extend in the positive direction of the real axis. Suppose $J_{11}$ extends in the positive direction of the real axis, approaching $b_{12}$, with the gain strictly monotonically increasing along $J_{11}$.  The other must extend in the negative direction of the real axis. $J_{12}$ extends in the negative direction of the real axis, approaching $b_{12}$, with the gains strictly monotonically decreasing along $J_{12}$. Consequently, to the left of $b_{12}$, the gains monotonically increase, while to the right of $b_{12}$, it monotonically decreases. Thus, the gain attains its maximum value at $b_{12}$.

In conclusion, when $PP\in \Gamma_{121}\cup\Gamma_{122}$, (1.2) must have maximum real breakaway points.
\end{proof}
\begin{lemma}
When $PP\in \Gamma_{121}$, then $\sharp_r \Delta=0$.
\end{lemma}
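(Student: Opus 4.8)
The plan is to show that the gain $K(x)$ of (1.2) stays strictly below $K_0=\frac{n}{n-1}$ at every finite point of the real axis; since a finite real root of $\Delta$ on a $2q\pi$-degree locus is exactly a finite real point where $K=K_0$, and the whole real axis consists of $2q\pi$-degree loci in this case (Lemma 3.3), this immediately yields $\sharp_r\Delta=0$. The argument follows the template of Lemma 3.4 (the $\Gamma_{11}$ case) but must now accommodate the maximum real breakaway points whose existence is guaranteed by Lemma 3.5.

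First I would record the data available for $PP\in\Gamma_{121}$. Since $p$ has no real zeros and, being in $\Gamma_1$, $p''$ has no real zeros either, Lemma 3.3 gives that $K(x)$ is continuous on all of $\mathbb{R}$ and that the real axis is composed of $2q\pi$-degree root loci. The unique simple real zero $p_0$ of $p'$ is a second-order real pole of (1.2), so $K(p_0)=0$ by Theorem 2.6, while Lemma 3.2 gives $K_{\pm\infty}=K_0$. Thus on each of the two rays $(-\infty,p_0)$ and $(p_0,+\infty)$ the continuous function $K$ runs from $0$ at the pole $p_0$ to $K_0$ at the point at infinity. By Lemma 3.5 there is at least one maximum real breakaway point, and by the defining property of $\Gamma_{121}$ every maximum real breakaway point has gain strictly less than $K_0$.

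Next I would carry out the core estimate on each ray separately; I describe $(p_0,+\infty)$, the ray $(-\infty,p_0)$ being symmetric. Compactifying to $[p_0,+\infty]$ and using the continuous extension of $K$ (with $K(p_0)=0$ and $K(+\infty)=K_0$), the maximum $M$ of $K$ over this compact interval is attained. I first claim $M\le K_0$: were $M>K_0$, the maximum could not be attained at either endpoint (which give $0$ and $K_0$), so it would be attained at a finite interior point $x_*$, where $\frac{dK}{ds}\mid_{s=x_*}=0$; by Theorem 2.19 $x_*$ is then a real finite breakaway point, and since $K$'s monotonicity reverses there from increasing to decreasing, $x_*$ is a standard breakaway point at which the gain is maximized, i.e.\ a maximum real breakaway point in the sense of Lemma 3.1. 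But the $\Gamma_{121}$ hypothesis forces $K(x_*)<K_0<M$, a contradiction, so $M\le K_0$, and since $K(+\infty)=K_0$ we get $M=K_0$. Finally, if some finite interior point $x_0$ had $K(x_0)=K_0=M$, the same reasoning would make $x_0$ a maximum real breakaway point with $K(x_0)<K_0$, again a contradiction; hence $K(x)<K_0$ for every finite $x\in(p_0,+\infty)$. A degenerate plateau on which $K\equiv K_0$ is impossible, because $K$ is a non-constant rational function and, by Theorem 2.24, no two root-locus segments overlap, so $K$ is nowhere locally constant. Applying the same argument on $(-\infty,p_0)$ and noting $K(p_0)=0<K_0$, I conclude $K(x)<K_0$ for all finite $x$, so $K(x)=K_0$ has no finite solution and $\sharp_r\Delta=0$.

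The step needing the most care is the classification used twice above: one must verify simultaneously that a finite interior point attaining the interval maximum is a critical point of $K$ (Theorem 2.19), that it is a \emph{standard} rather than non-standard breakaway point so that the gain genuinely attains an extremum there (Lemma 3.1), and that no constant stretch intervenes (Theorem 2.24). Only once this point is identified as a \emph{maximum} real breakaway point can the structural hypothesis defining $\Gamma_{121}$—that all such points have gain below $K_0$—be invoked to force the strict inequality $K<K_0$ throughout each finite ray and thereby exclude every finite real root of $\Delta$.
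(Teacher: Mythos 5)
Your proposal is correct and follows essentially the same route as the paper's proof: continuity of $K$ on the whole real axis, the boundary values $K(p_0)=0$ and $K(\pm\infty)=K_0$, and the $\Gamma_{121}$ hypothesis that every maximum real breakaway point has gain below $K_0$, combining to give $K(x)<K_0$ at every finite point and hence no real root of $\Delta$. Your version is in fact more careful than the paper's (which compresses the extreme-value argument into a single appeal to the ``Intermediate Value Theorem''), since you explicitly verify via Theorem 2.19 and Lemma 3.1 that an interior maximizer must be a maximum real breakaway point and rule out a constant plateau.
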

\begin{proof}
At the endpoints $\pm\infty$ of the real axis, the gain is $K_{\pm\infty}=\frac{n}{n-1}$. At the point $p_0$, the gain is $K(p_0)=0$. If the gains at all maximum real breakaway points are less than $K_0$, $K(x)$ is continuous on the entire real axis, when $PP\in \Gamma_{121}$, taking all positive real values between $0$ and $\frac{n}{n-1}$. By the Intermediate Value Theorem, $K(x)<K_0$ for all $x$ on the real axis. Consequently, no point on the real axis has a gain of $K_0$, and thus $\Delta$ has no real roots. Therefore, if $PP\in \Gamma_{121}$, then $\sharp_r \Delta=0$.
\end{proof}
\begin{lemma}
If $PP\in \Gamma_{122}$, then $\sharp_r \Delta>0$.
\end{lemma}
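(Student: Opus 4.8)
The plan is to translate ``real root of $\Delta$'' into ``real point on a $2q\pi$-degree root locus where the gain equals $K_0$'', and then to exhibit such a point by a one-variable Intermediate Value Theorem argument on the gain function. Recall from the discussion following (1.2) that, on a $2q\pi$-degree root locus, a point with gain $K=K_0=\frac{n}{n-1}$ is exactly a root of $\Delta$. Since $\Gamma_{122}\subseteq\Gamma_{12}\subseteq\Gamma_1$, the polynomials $p$ and $p''$ have no real zeros, so by Lemma 3.3 the gain $K(x)=\frac{(p'(x))^2}{p''(x)p(x)}$ is finite and continuous on all of $\mathbb{R}$ and the entire real axis is composed of $2q\pi$-degree root loci. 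Hence in this class \emph{any} real $x$ with $K(x)=K_0$ is automatically a real root of $\Delta$, and it suffices to find one such point.

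First I would fix two reference values of $K$ on the real axis. At the unique simple real zero $p_0$ of $p'$, the function $PP$ has a second-order pole (the factor $(p')^2$ vanishes to order two while $p''(p_0)p(p_0)\neq0$), so $p_0$ is a pole of (1.2); by Theorem 2.6 the gain there is $K(p_0)=0<K_0$. On the other hand, the defining property of $\Gamma_{122}$ — reinforced by the preceding lemma, which guarantees that maximum real breakaway points actually occur whenever $PP\in\Gamma_{121}\cup\Gamma_{122}$ — supplies a maximum real breakaway point $b_{122}$, a finite real point, with $K(b_{122})\ge K_0$. Note $b_{122}\neq p_0$, since the two gains $0$ and $K(b_{122})$ differ.

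With these endpoints the conclusion follows quickly. If $K(b_{122})=K_0$, then $b_{122}$ itself is a real point with gain $K_0$ lying on a $2q\pi$-degree root locus, hence a real root of $\Delta$. Otherwise $K(p_0)=0<K_0<K(b_{122})$; since $K$ is continuous on the closed bounded interval with endpoints $p_0$ and $b_{122}$, the Intermediate Value Theorem produces a point $x^{*}$ strictly between them with $K(x^{*})=K_0$, which is again a real root of $\Delta$. In either case $\sharp_r\Delta\ge 1>0$, as required.

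I do not anticipate a deep obstacle, since the structural work is already carried by Lemma 3.3 (continuity of $K$ and the $2q\pi$-root-locus description) and by the preceding lemma (existence of maximum real breakaway points). The step that genuinely requires care — and the one I would be most explicit about — is the equivalence used at the very end: a real point with $K=K_0$ counts as a zero of $\Delta$ \emph{only} because it lies on a $2q\pi$-degree, rather than a $2q\pi+\pi$-degree, root locus; on the latter such a point would not give $\Delta=0$. It is precisely the assertion of Lemma 3.3 that the whole real axis consists of $2q\pi$-degree root loci in $\Gamma_{122}$ that licenses the argument, so I would invoke it explicitly rather than treat $K=K_0\Rightarrow\Delta=0$ as automatic.
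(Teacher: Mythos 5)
Your proposal is correct and follows essentially the same route as the paper's proof: both rely on the continuity of $K$ on the whole real axis (Lemma 3.3), the fact that $K$ ranges from $0$ (at the pole $p_0$) up to $K(b_{122})\ge K_0$ at the maximum real breakaway point guaranteed by the definition of $\Gamma_{122}$, and the Intermediate Value Theorem to produce a point with $K=K_0$, which is a real root of $\Delta$ because the real axis consists of $2q\pi$-degree root loci. Your version is slightly more explicit than the paper's about why $K=K_0$ yields a root of $\Delta$ only on a $2q\pi$-degree locus, but this is a presentational difference, not a different argument.
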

\begin{proof}
By definition of $\Gamma_{122}$, there exists at least one maximum real breakaway point $b_{122}$ such that $K(b_{122})\ge K_0$. Because $K(x)$ is continuous on the entire real axis, it takes all positive real values between $K(b_{122})$ and 0. The Intermediate Value Theorem guarantees the existence of at least one point $x_{*}$ such that $K(x_{*})=K_0$. This $x_{*}$ is a real root of $\Delta$. Therefore, if $PP\in \Gamma_{122}$, then $\sharp_r \Delta>0$.
\end{proof}

\begin{lemma}
When $PP\in \Gamma_{22}$, then $\sharp_r \Delta>0$.
\end{lemma}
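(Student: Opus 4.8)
The plan is to exhibit a single real point lying on a $2q\pi$-degree root locus of (1.2) at which the gain equals $K_0=\tfrac{n}{n-1}$; by the correspondence recorded just after (1.2), any such point is a real zero of $\Delta$, which is exactly what we need. The decisive feature of $\Gamma_{22}$ is that, although the region to the right of $p_0$ behaves like the $\Gamma_{11}$ situation (no real zeros of $p''$, so the gain never reaches $K_0$ there), to the left of $p_0$ there is a real zero of $p''$, and at a zero of $p''$ the gain blows up. This forces the gain to sweep past $K_0$.

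Concretely, since $PP\in\Gamma_{22}$ we have that $p$ has no real zeros, $p'$ has the unique real simple zero $p_0$, and $p''$ has real zeros to the left of $p_0$ but none to its right. Let $z_k$ be the largest real zero of $p''$ lying to the left of $p_0$; then $p''$ has no zero on the open interval $(z_k,p_0)$, while $p$ and $p'$ also have no zero there. Hence $K(x)=\dfrac{(p'(x))^2}{p''(x)\,p(x)}$ is finite, continuous and positive on $(z_k,p_0)$.

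I would next show that $(z_k,p_0)$ is a $2q\pi$-degree root locus. Because $p_0$ is a simple zero of $p'$, it is a double pole of (1.2), and a double pole leaves the sign of $PP=\dfrac{p''p}{(p')^2}$ unchanged across $p_0$. The interval $(p_0,+\infty)$ has no real poles or zeros of (1.2) to its right, so by Lemma 2.3 it is a $2q\pi$-degree locus and $PP>0$ on it; therefore $PP>0$ on $(z_k,p_0)$ as well, so the latter is again a $2q\pi$-degree locus. (Equivalently, the only real pole or zero of (1.2) to the right of $(z_k,p_0)$ is the double pole $p_0$, an even count, so Lemma 2.3 applies directly.) Since $p_0$ is a pole and $z_k$ a zero of (1.2), Theorem 2.6 gives $K(p_0)=0$ and $K(x)\to+\infty$ as $x\to z_k^{+}$, and by Theorem 2.8 the gain increases strictly and monotonically from $0$ to $+\infty$ along this root locus.

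Finally, as $0<K_0<+\infty$, choose $a$ near $p_0$ with $K(a)<K_0$ and $b$ near $z_k$ with $K(b)>K_0$; the Intermediate Value Theorem applied to the continuous function $K$ on $[b,a]\subset(z_k,p_0)$ produces $x_*\in(z_k,p_0)$ with $K(x_*)=K_0$. Since $x_*$ lies on a $2q\pi$-degree root locus of (1.2), it is a real zero of $\Delta$, so $\sharp_r\Delta>0$. The step I expect to require the most care is the determination of the degree of the locus on $(z_k,p_0)$: one must verify that the double pole at $p_0$ preserves the sign of $PP$, so that $(z_k,p_0)$ is genuinely a $2q\pi$-degree locus rather than a $2q\pi+\pi$-degree one, on which the value $K=K_0$ would not correspond to a zero of $\Delta$. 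Everything else reduces to continuity together with the endpoint gain values $0$ and $+\infty$ furnished by Theorem 2.6.
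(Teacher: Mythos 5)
Your proposal is correct and follows essentially the same route as the paper's own proof: both identify the interval between $p_0$ and the adjacent real zero of $p''$ on its left, use Lemma 2.3 (even count of real zeros and poles to the right, namely the double pole $p_0$) to see it is a $2q\pi$-degree root locus, and then apply the Intermediate Value Theorem to the continuous gain running from $0$ at the pole to $+\infty$ at the zero to locate a point with $K=K_0$. The extra sign-of-$PP$ discussion you add is a harmless reformulation of the same parity argument.
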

\begin{proof}
When $PP\in \Gamma_{22}$, the polynomial $p$ has no real zeros, and $p^{'}$ has only one real zero $p_0$. To the right of $p_0$, $p^{''}$ possesses no real zeros, while to the left of $p_0$, $p^{''}$ has real zeros.

Let $z_3$ be the real zero of (1.2) adjacent to $p_0$ on the left. To the right of $p_0$, (1.2) has no real zeros. As $p_0$ is a second-order pole. To the right side of the interval $(z_3,p_0)$, (1.2) has an even number of real zeros and poles. By Lemma 2.3, the interval $(z_3,p_0)$ is a $2q\pi$-degree root locus of (1.2), extending from the pole $p_0$ to the zero $z_3$. At the point $z_3$, the gain $K(z_3)=+\infty$, while at $p_0$, $K(p_0)=0$. Since $K(x)$ is continuous on $(z_3,p_0)$(there are no pole of $K(x)$ in $(z_3,p_0)$). $K(x)$ attains all positive real values from $0$ to $+\infty$.  By the Intermediate Value Theorem, there exists at least one point $x_{*} \in (z_3, p_0)$ such that $K(x_{*}) = K_0$ in the interval $(z_3,p_0)$. This $x_{*}$ is a real root of $\Delta$ on the left of $p_0$. Thus, when $PP\in \Gamma_{22}$, then $\sharp_r \Delta>0$.
\end{proof}

There may exist real breakaway points of (1.2) in the interval $(z_3,p_0)$. This implies that $(z_3,p_0)$ contains multiple root loci rather than a single root locus. In the proof of Lemma 3.8, we only provided the proof that the interval $(z_3,p_0)$ is a single root locus. If there exist real breakaway points of (1.2) in the interval $(z_3,p_0)$. In the interval $(z_3,p_0)$, $K(x)$ still attains all positive real values from $0$ to $+\infty$. This doesn't affect the proof and result of Lemma 3.8.

\begin{lemma}
In the interval $(z_m,+\infty)$, if (1.2) has a standard real breakaway point, then  (1.2) must have minimum real breakaway points.
\end{lemma}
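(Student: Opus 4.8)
The plan is to read off the behaviour of the gain $K(x)=\left|\frac{(p'(x))^{2}}{p''(x)\,p(x)}\right|$ on the right infinite interval $(z_m,+\infty)$ from its two endpoint values and then invoke Lemma 3.1. First I would check that $K$ is continuous on $(z_m,+\infty)$: since $p$ has no real zeros, the only real zero of $p'$ is $p_0<z_m$, and $z_m$ is the largest real zero of $p''$, the open interval $(z_m,+\infty)$ contains no zero of $PP=\frac{p''p}{(p')^{2}}$ and hence no real pole of $K$. At the left endpoint $z_m$ is a zero of $PP$ and therefore a pole of $K$, so $\lim_{x\to z_m^{+}}K(x)=+\infty$; at the right endpoint Lemma 3.2 gives $\lim_{x\to+\infty}K(x)=K_0=\frac{n}{n-1}$, a finite positive number.

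Next, using the hypothesis that $(z_m,+\infty)$ contains a standard real breakaway point, I would pick the smallest one, call it $b_1$. Such a point exists because every breakaway point satisfies $\frac{dK}{ds}=0$ (Lemma 2.4), and $\frac{dK}{ds}$ is a rational function with only finitely many real zeros, so the standard real breakaway points in $(z_m,+\infty)$ form a finite nonempty set with a least element $b_1>z_m$. On the sub-interval $(z_m,b_1)$ the monotonicity of $K$ does not change: passing a non-standard real breakaway point leaves the monotonicity of the gain unchanged, while the only points at which it could reverse are standard real breakaway points, and $b_1$ is the leftmost such point. Hence $K$ is strictly monotone on $(z_m,b_1)$. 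Because $K(x)\to+\infty$ as $x\to z_m^{+}$ while $K(b_1)$ is finite (as $b_1$ is neither a zero nor a pole of (1.2)), this forces $K$ to be strictly decreasing on $(z_m,b_1)$.

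Finally I would identify the type of the extremum at $b_1$. By Lemma 3.1 the standard real breakaway point $b_1$ is an extreme point of the gain, and by the definition of a standard real breakaway point the monotonicity of $K$ reverses across $b_1$. Since $K$ is strictly decreasing just to the left of $b_1$, it must be strictly increasing just to the right of $b_1$, so $K$ attains a local minimum at $b_1$; that is, $b_1$ is a minimum real breakaway point in the sense of Definition 1.10. This already yields the conclusion that (1.2) has a minimum real breakaway point in $(z_m,+\infty)$. I expect the only delicate step to be the monotonicity bookkeeping on $(z_m,b_1)$: one must make sure that the infinite boundary value at $z_m$ cannot coexist with an increasing branch, and that intervening non-standard breakaway points genuinely do not flip the monotonicity, both of which are secured by the endpoint analysis of the first paragraph and by the standard/non-standard dichotomy established before Lemma 3.1.
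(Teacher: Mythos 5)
Your proof is correct, but it takes a genuinely different route from the paper's. The paper argues geometrically: it observes that a standard real breakaway point in $(z_m,+\infty)$ forces root loci from $\mathbb{C}^{*}$ to enter the real axis, that by symmetry two such loci first meet at some $b_{21}$ and then (by Theorem 2.24, non-overlapping) must depart in opposite directions along the real axis, and that since gain increases in the direction of extension, the gain decreases on one side of $b_{21}$ and increases on the other, making $b_{21}$ a minimum. You instead run an endpoint-value argument: $K$ is continuous on $(z_m,+\infty)$ with $K\to+\infty$ at $z_m^{+}$ and $K\to K_0$ at $+\infty$, the standard real breakaway points are finitely many (zeros of the rational function $dK/ds$), and on the gap between $z_m$ and the leftmost one $b_1$ the gain is strictly monotone, hence forced to be decreasing by the infinite left-endpoint value; the reversal of monotonicity at $b_1$ then makes it a minimum. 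Your version is more elementary and arguably tighter — it pins down a specific minimum (the leftmost standard breakaway point) using only continuity, the endpoint limits, and the standard/non-standard dichotomy, and it avoids the paper's somewhat delicate claim about which of the two intersection events (loci arriving at the axis versus loci leaving it) happens "first." The paper's argument, on the other hand, does not need the boundary value at $z_m$ at all and is the template reused for Lemmas 3.10 and 3.11, where the same entering-loci picture applies; your endpoint argument would need the analogous boundary data in each of those settings (which is in fact available). Both proofs rely on the same underlying facts (Lemma 3.1, strict monotonicity of the gain along a root locus, finiteness and classification of breakaway points), so I see no gap in your reasoning.
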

\begin{proof}
In $(z_m, +\infty)$, $p$ has no real zeros and $p^{''}$ also has no real zeros. To the right of $z_m$, there exists a standard real breakaway point of (1.2). Consequently, there must be root loci in $\mathbb{C}^{*}$ that enter $(z_m,+\infty)$. Since the $2q\pi$-degree root loci of (1.2) are symmetric about the real axis, at least two root loci in $\mathbb{C}^{*}$ must first intersect at a standard real breakaway point $b_{21}$ in $(z_m,+\infty)$. These two root loci, denoted as $L_{21}$ and $L_{22}$, enter the real axis and then separate. $L_{21}$ and $L_{22}$ are two distinct root loci, by Theorem 2.24, they cannot overlap. $L_{21}$ and $L_{22}$ cannot extend to the same direction. Therefore, $L_{21}$ and $L_{22}$ must  extend in opposite directions along the real axis.

On either side of $b_{21}$, the root loci $L_{21}$ and $L_{22}$ extend in opposite directions. One of them must extend in the positive direction of the real axis. Without loss of generality, assume that $L_{21}$ extends in the positive direction of the real axis, departing from $b_{21}$. Along $L_{21}$, the gain strictly monotonically increases. The other must extend in the negative direction of the real axis. $L_{22}$ extends in the negative direction of the real axis, departing from $b_{21}$, and the gain strictly monotonically decreases along $L_{22}$. Thus, to the right of $b_{21}$, the gain monotonically increases, while to the left, it monotonically decreases. Therefore, the gain attains its minimum value at $b_{21}$.

In conclusion, in $(z_m,+\infty)$, if $PP$ has a standard real breakaway point. Then, (1.2) must have minimum real breakaway points.
\end{proof}
By repeating the proof of Lemma 3.9, we can establish Lemma 3.10.
\begin{lemma}
In the interval $(-\infty, z_s)$, if $PP$ has a standard real breakaway point. Then, (1.2) must have minimum real breakaway points.
\end{lemma}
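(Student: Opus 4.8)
The plan is to carry the argument of Lemma 3.9 over unchanged, reflecting the configuration through $p_0$ so that the right infinite interval $(z_m,+\infty)$ is replaced by the left infinite interval $(-\infty,z_s)$. The first task is to verify that the three structural facts exploited in Lemma 3.9 still hold on $(-\infty,z_s)$. Since $PP\in\Gamma_2\subseteq\Gamma=\Lambda_{23}\subseteq\Lambda_2$, the polynomial $p$ has no real zeros at all, hence none in $(-\infty,z_s)$; because $z_s$ is by definition the smallest real zero of $p^{''}$ lying to the left of $p_0$, there is no real zero of $p^{''}$ in $(-\infty,z_s)$; and because $p^{'}$ has its unique real zero at $p_0>z_s$, the interval $(-\infty,z_s)$ contains no pole of (1.2). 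Consequently the gain $K(x)=\frac{(p^{'}(x))^2}{p^{''}(x)p(x)}$ is continuous on $(-\infty,z_s)$ and this interval carries no zero or pole of (1.2), exactly as $(z_m,+\infty)$ did in Lemma 3.9.

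Granting this, I would invoke the hypothesis that $PP$ has a standard real breakaway point $b$ in $(-\infty,z_s)$. By the symmetry of the $2q\pi$-degree root loci about the real axis, the two branches producing $b$ arrive from $\mathbb{C}^{*}$, meet at $b$, enter the real axis, and then separate; call them $L_1$ and $L_2$. By Theorem 2.24 no two root-locus segments overlap in the finite region, so $L_1$ and $L_2$ cannot extend in the same direction and must leave $b$ in opposite directions along the real axis.

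Finally I would read off the monotonicity from the root-locus extension principle, according to which the gain increases strictly in the extension direction of each branch. Exactly as in Lemma 3.9, along the branch extending in the positive direction $K$ is a strictly increasing function of $x$, while along the branch extending in the negative direction $K$ increases strictly as $x$ decreases; equivalently $K$ is strictly decreasing to the left of $b$ and strictly increasing to the right of $b$. Hence $K$ attains a local minimum at $b$, so $b$ is a minimum real breakaway point, which is the assertion of Lemma 3.10.

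The only step that is not a literal copy of Lemma 3.9 is the verification in the first paragraph that $(-\infty,z_s)$ is free of real zeros and poles of (1.2); this is precisely where the defining property of $z_s$ as the smallest left-hand zero of $p^{''}$ and the simplicity and location of the unique real zero $p_0$ of $p^{'}$ enter. Once that check is in place the argument is a direct mirror image of the already-established Lemma 3.9, so I anticipate no genuine obstacle: the content is entirely the reflection of a result proved just above.
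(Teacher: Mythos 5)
Your proposal is correct and follows exactly the route the paper takes: the paper's entire proof of this lemma is the remark that it is obtained ``by repeating the proof of Lemma 3.9,'' and your argument is precisely that repetition, with the added (and welcome) explicit check that $(-\infty,z_s)$ contains no zeros or poles of (1.2) so that the mirror argument applies.
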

\begin{lemma}
In a finite interval $(z_1, z_2)$ between any two adjacent real zeros of $p^{''}$, (1.2) must have minimum real breakaway points.
\end{lemma}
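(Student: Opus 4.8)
The plan is to read off the behaviour of the gain $K(x)=\abs{\frac{(p^{'}(x))^{2}}{p^{''}(x)p(x)}}$ at the two endpoints of the interval and then apply the breakaway-point criterion of Theorem 2.19. First I would establish the boundary values. Since $z_1$ and $z_2$ are adjacent real zeros of $p^{''}$, they are zeros of $PP=\frac{p^{''}p}{(p^{'})^{2}}$, so Theorem 2.6 gives $K(z_1)=K(z_2)=+\infty$. Next I would verify that the open interval $(z_1,z_2)$ contains no zeros or poles of $PP$: there is no further zero of $p^{''}$ strictly between $z_1$ and $z_2$ because they are adjacent; the polynomial $p$ has no real zeros at all, as $PP\in\Gamma\subseteq\Lambda_2$; and the unique real zero $p_0$ of $p^{'}$ lies outside $(z_1,z_2)$, since a finite interval lies entirely on one side of $p_0$ and $p_0$ is not itself a zero of $p^{''}$ (it is a simple zero of $p^{'}$, so $p^{''}(p_0)\neq0$). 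Hence $K(x)$ is finite, positive and continuous throughout $(z_1,z_2)$.

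With the endpoint values in hand, the core of the argument is an extreme-value statement. Because $K(x)\to+\infty$ as $x\to z_1^{+}$ and as $x\to z_2^{-}$ while $K$ stays finite inside, $K$ attains a genuine minimum at some interior point $b\in(z_1,z_2)$. At this point $K$ is a smooth rational expression, so $\frac{dK}{ds}\big|_{s=b}=0$, and Theorem 2.19 therefore identifies $b$ as a real finite breakaway point of (1.2). To classify $b$, I would use the monotonicity criterion recorded after Theorem 2.19: at a non-standard real breakaway point the monotonicity of the gain is unchanged, whereas at a standard one it reverses. Since $b$ is a genuine interior minimum of $K$, the gain decreases to the left of $b$ and increases to the right, so its monotonicity reverses; thus $b$ is a standard real breakaway point, and because $K$ attains a minimum there, $b$ is by definition a minimum real breakaway point.

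I expect no essential difficulty: the statement is fundamentally the extreme-value theorem combined with the breakaway-point characterisation, mirroring the structure of Lemma 3.9 but with both endpoints carrying gain $+\infty$. The only steps that require care are the bookkeeping that $(z_1,z_2)$ is free of poles and zeros of $PP$ --- which is what guarantees the clean picture of an interior minimum flanked by two boundary poles of the gain --- and the final classification, where it is the reversal of monotonicity (rather than a direct count of root loci) that certifies $b$ as standard, hence as a minimum real breakaway point.
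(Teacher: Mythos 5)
Your proof is correct, but it takes a genuinely different route from the paper's. The paper argues geometrically with root loci: since $z_1$ and $z_2$ are zeros of (1.2) that must each receive a root locus, and the interval contains no poles, root loci from $\mathbb{C}^{*}$ must enter the real axis inside $(z_1,z_2)$; by symmetry two such loci meet at a standard real breakaway point $b_{31}$ and then separate in opposite directions along the real axis, and since the gain increases along each departing locus, $K$ attains a minimum at $b_{31}$. You instead work directly with the gain function: $K\to+\infty$ at both endpoints (Theorem 2.6), $K$ is finite, positive and continuous inside because the interval is free of zeros and poles of $PP$ (you correctly justify that $p_0\notin(z_1,z_2)$ via the one-sidedness of finite intervals and $p^{''}(p_0)\neq 0$), so $K$ attains an interior minimum $b$; then $\frac{dK}{ds}\big|_{s=b}=0$, Theorem 2.19 makes $b$ a breakaway point, and the monotonicity-reversal criterion makes it standard, hence a minimum real breakaway point. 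Your version is arguably tighter: it replaces the paper's unargued step that loci ``must enter from $\mathbb{C}^{*}$'' with the extreme value theorem, and it does not need the symmetry or non-overlapping properties of the loci. What the paper's version buys is the explicit picture of which branches meet at the breakaway point, which it reuses in the parallel Lemmas 3.5 and 3.9. One small point worth making explicit in your write-up: since $K$ is a non-constant real rational function, it cannot be locally constant at $b$, so the minimum really does force a reversal of monotonicity rather than a degenerate flat.
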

\begin{proof}
Consider a finite interval $(z_1, z_2)$ between any two adjacent real zeros of $p^{''}$. In this interval:

1. $p$ has no real zeros,

2. $p^{''}$ has no real zeros (by definition, since $z_1$ and $z_2$ are consecutive zeros),

3. $z_1$ and $z_2$ are zeros of (1.2), and

4. There are no poles of (1.2) between $z_1$ and $z_2$.

Since both $z_1$ and $z_2$ must receive root loci, there must exist root loci in $\mathbb{C}^{*}$ that enter the real axis at some point within $(z_1, z_2)$.

The root loci of (1.2) on the real axis consist of both $2q\pi$ and $(2q\pi+\pi)$-degree branches. Due to the symmetry of these root loci about the real axis, at least two root loci in $\mathbb{C}^{*}$ must intersect at a standard real breakaway point $b_{31}$ on the real axis. Let $L_{31}$ and $L_{32}$ denote these two root loci as they enter the real axis. After entering, they separate. $L_{31}$ and $L_{32}$ are two distinct root loci, by Theorem 2.24, they cannot overlap. $L_{31}$ and $L_{32}$ cannot extend to the same direction. Therefore, $L_{31}$ and $L_{32}$ must  extend in opposite directions along the real axis.

On both sides of the standard real breakaway point $b_{31}$, the root loci $L_{31}$ and $L_{32}$ extend in opposite directions. One of them must extend in the positive direction of the real axis. Here, let $L_{31}$ extend in the positive direction of the real axis. $L_{31}$ departs from $b_{31}$. Therefore, the gain strictly monotonically increases along  $L_{31}$. The other root locus $L_{32}$ extends in the negative direction of the real axis. $L_{32}$ departs from $b_{31}$. The gain strictly monotonically decreases along $L_{32}$. On the real axis to the right of point $b_{31}$, the gain monotonically increases. On the real axis to the left of point $b_{31}$, the gain monotonically decreases. Thus, at the standard real breakaway point $b_{31}$, the gain attains a minimum value.

Therefore, in $(z_1, z_2)$, $PP$  has standard real breakaway point. (1.2) must have minimum real breakaway points.
\end{proof}
\begin{lemma}
In $(z_m,+\infty)$, $(-\infty, z_s)$ and the finite interval $(z_1, z_2)$ between any two adjacent real zeros of $p^{''}$, $K(x)$ is continuous.
\end{lemma}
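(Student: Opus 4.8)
The plan is to reduce the statement to the elementary fact that a quotient of polynomials is continuous away from the zeros of its denominator. First I would recall the gain expression of (1.2), namely
$$K(x)=\left|\frac{(p^{'}(x))^{2}}{p^{''}(x)\,p(x)}\right|,$$
so that $K$ is the absolute value of a ratio of two polynomials. The numerator $(p^{'})^{2}$ and the denominator $p^{''}p$ are polynomials, hence $K$ is continuous at every real point where the denominator $p^{''}(x)p(x)$ does not vanish, and taking absolute values preserves continuity. Consequently the only possible discontinuities of $K$ on the real axis are the real zeros of $p^{''}$ together with the real zeros of $p$.

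Next I would invoke the defining hypotheses of the ambient set $\Gamma=\Lambda_{23}$. There $p$ has no real zeros, so $p(x)\neq 0$ for every real $x$; thus $p$ contributes no discontinuity on any interval. It then remains to rule out zeros of $p^{''}$ in the interiors of the three intervals. By the definition of $z_m$ as the largest real zero of $p^{''}$ to the right of $p_0$, the open interval $(z_m,+\infty)$ contains no real zero of $p^{''}$; by the choice of $z_s$ as the smallest real zero of $p^{''}$ to the left of $p_0$, the interval $(-\infty,z_s)$ contains none; and since $z_1,z_2$ are adjacent real zeros of $p^{''}$, the open interval $(z_1,z_2)$ contains no real zero of $p^{''}$ by the very meaning of adjacency.

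Combining these observations, on each of the three open intervals the denominator $p^{''}(x)p(x)$ is nowhere zero, so $K$ restricts to a continuous function on each of them, which is exactly the asserted conclusion. I do not anticipate a genuine obstacle here: the whole content is that the poles of the rational gain function are precisely the real zeros of $p^{''}$ and of $p$, all of which have been excluded from the interiors by construction. The only point deserving a word of care is that the endpoints $z_m$, $z_s$, $z_1$, $z_2$ (where $p^{''}$ vanishes and $K$ blows up to $+\infty$) are \emph{not} included, so that the sets under consideration are genuinely open intervals and $K$ is continuous throughout each of them.
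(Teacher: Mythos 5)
Your proposal is correct and matches the paper's own proof in substance: both reduce the claim to the observation that on each of the three open intervals neither $p$ nor $p^{''}$ vanishes, so the gain $K(x)=\bigl|\tfrac{(p^{'}(x))^{2}}{p^{''}(x)p(x)}\bigr|$ has no real poles there and is therefore continuous. You merely spell out more explicitly why the intervals exclude the zeros of $p^{''}$ (the definitions of $z_m$, $z_s$, and adjacency) and why $p$ has no real zeros (membership in $\Gamma=\Lambda_{23}$), which the paper leaves implicit.
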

\begin{proof}
In $(z_m,+\infty)$, $(-\infty, z_s)$ and the finite interval $(z_1, z_2)$ between any two adjacent real zeros of $p^{''}$, $p$ has no real zeros. $p^{''}$ has no real zeros. Then, $K(x)$ has no real poles. Thus, $K(x)$ is continuous in $(z_m,+\infty)$, $(-\infty, z_s)$ and the finite interval $(z_1, z_2)$.
\end{proof}

\begin{lemma}
For $PP\in \Gamma_{21}\cup \Gamma_{23}$, if either:

1. There is no standard real breakaway point in $(z_m,+\infty)$, or

2. All minimum real breakaway points in $(z_m,+\infty)$ have gains greater than $K_0$,

then $(z_m,+\infty)$ contains no root of $\Delta$.
\end{lemma}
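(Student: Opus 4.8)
The plan is to reduce the statement to a monotonicity/extremum analysis of the gain function $K(x)=\abs{\frac{(p^{'}(x))^2}{p^{''}(x)p(x)}}$ on the interval $(z_m,+\infty)$. Since $PP\in\Gamma_{21}\cup\Gamma_{23}$ forces $p$ to have no real zeros and $p^{'}$ to have its unique real zero $p_0$ strictly to the left of $z_m$, there are no poles of $K$ in $(z_m,+\infty)$, so by Lemma 3.12 the function $K$ is continuous there. Recall that $(z_m,+\infty)$ is a $2q\pi$-degree root locus of (1.2); hence, by the remark following (1.2), a finite point of this interval is a real root of $\Delta$ precisely when its gain equals $K_0=\frac{n}{n-1}$. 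Thus it suffices to prove that $K(x)>K_0$ for every finite $x\in(z_m,+\infty)$. The boundary data I would first record are: $z_m$ is a real zero of $p^{''}$, hence a zero of $PP$, so by Lemma 2.5 and Theorem 2.6 we have $K(z_m^{+})=+\infty$; and by Lemma 3.2, $\lim_{x\to+\infty}K(x)=K_0$.

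Under hypothesis (1), there is no standard real breakaway point in $(z_m,+\infty)$. By Lemma 3.1 the standard real breakaway points are exactly the extrema of the gain, while non-standard breakaway points do not reverse its monotonicity; therefore $K$ has no interior local extremum and is monotonic on $(z_m,+\infty)$. Comparing the endpoint values $K(z_m^{+})=+\infty$ and the limit $K_0$ at infinity forces $K$ to be strictly decreasing, so $K(x)>K_0$ for all finite $x$ and no root of $\Delta$ occurs.

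Under hypothesis (2), I would argue by contradiction. Suppose some finite $x_1\in(z_m,+\infty)$ satisfies $K(x_1)\le K_0$. If in fact $K(x_1)<K_0$, then since $K(z_m^{+})=+\infty>K_0>K(x_1)$ and $K(x)\to K_0>K(x_1)$ as $x\to+\infty$, I can choose $\eta>0$ small and $M>x_1$ large so that $K(z_m+\eta)>K(x_1)$ and $K(M)>K(x_1)$; then $K$ attains its minimum over the compact interval $[z_m+\eta,M]$ at an interior point $x^{*}$, which is an interior local minimum of $K$ with $K(x^{*})\le K(x_1)<K_0$. If instead $K(x_1)=K_0$ and $K\ge K_0$ throughout the finite part of the interval, then $x_1$ itself is an interior local minimum with $K(x_1)=K_0$. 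In either situation $x^{*}$ (respectively $x_1$) is a finite interior critical point of the gain, hence a real breakaway point by Theorem 2.19; being a local minimum at which the monotonicity reverses, it is a standard breakaway point and, by the defining property of minimum real breakaway points, a minimum real breakaway point with gain $\le K_0$. This contradicts the hypothesis that all minimum real breakaway points in $(z_m,+\infty)$ have gains greater than $K_0$. Hence $K(x)>K_0$ for all finite $x\in(z_m,+\infty)$.

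In both cases $K(x)=K_0$ has no finite solution in $(z_m,+\infty)$, so $(z_m,+\infty)$ contains no root of $\Delta$. The main obstacle is the contradiction step for hypothesis (2): one must convert the purely analytic statement ``no finite point has $K\le K_0$'' into the geometric language of breakaway points. The delicate points are, first, that the limiting value $K_0$ is only approached at $+\infty$ (which is excluded), so any dip of $K$ down to or below $K_0$ at a finite point must be realized as a genuine \emph{interior} local minimum rather than as boundary behaviour; and second, that such an interior minimum is a \emph{standard} (monotonicity-reversing) breakaway point, so that it legitimately qualifies as a minimum real breakaway point and activates the hypothesis. Pinning down the compact interval on which the interior minimum is captured, using the two boundary limits, is the crux of making this rigorous.
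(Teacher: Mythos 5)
Your proof is correct and follows essentially the same route as the paper: compare the boundary values $K(z_m^{+})=+\infty$ and $K_{+\infty}=K_0$, use continuity of $K$ on $(z_m,+\infty)$, and conclude $K(x)>K_0$ at every finite point so that no point of the $2q\pi$-degree locus can have gain $K_0$. Your contradiction argument in case (2) — capturing an interior local minimum on a compact subinterval and identifying it as a standard minimum real breakaway point with gain $\le K_0$ — is a more carefully justified version of the step the paper's proof merely asserts.
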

\begin{proof}
At the endpoint $z_m$, the gain $K(z_m)=+\infty$. At the endpoint of positive infinity: $K_{+\infty}=\frac{n}{n-1}$. If there is no standard real breakaway point in $(z_m,+\infty)$, then $(z_m,+\infty)$ is a segment of the complete root locus of (1.2), with gains obtain all positive real values from $+\infty$ at zero $z_m$ to the $K_{+\infty}=\frac{n}{n-1}$ at the positive infinity. Since the gains of all points in $(z_m,+\infty)$  are larger than $K_0$, no point in $(z_m,+\infty)$ satisfies $K(x)=K_0$. So, $\Delta$ has no root in $(z_m,+\infty)$.

In the interval $(z_m,+\infty)$, if the standard real breakaway points exist, and all minimum real breakaway points in $(z_m,+\infty)$ have gains greater than $K_0$, then since $K(x)$ is continuous and attains all positive real values from $+\infty$ to $K_0$. So the gains of all points in $(z_m,+\infty)$  are larger than $K_0$, no point in $(z_m,+\infty)$ satisfies $K(x)=K_0$. So, $\Delta$ has no root in $(z_m,+\infty)$.
\end{proof}

\begin{lemma}
For $PP\in \Gamma_{21}\cup \Gamma_{23}$, in the interval $(z_m,+\infty)$, there exist the standard real breakaway points. If there exists at least one minimum real breakaway point $b_m$, $K(b_m)\le K_0$, then  in $(z_m,+\infty)$,  $\Delta$ must have a root.
\end{lemma}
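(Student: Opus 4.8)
The plan is to run an Intermediate Value Theorem argument on the gain function $K(x)$ over the interval $(z_m,+\infty)$, exactly in the spirit of the proofs of Lemmas 3.7 and 3.8. The whole interval $(z_m,+\infty)$ is a $2q\pi$-degree root locus of (1.2): to the right of any of its points the total number of real zeros and poles of (1.2) is $0$, an even number, since the only real pole is $p_0<z_m$ (the unique real zero of $p'$) and $z_m$ is by definition the largest real zero of $p''$ to the right of $p_0$, so Lemma 2.3 applies. Consequently any real point of this interval at which $K=K_0$ is automatically a root of $\Delta$, and it therefore suffices to produce one point $x_*\in(z_m,+\infty)$ with $K(x_*)=K_0$.

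First I would record the boundary behaviour of the gain. Since $z_m$ is a zero of $p''$ and $p$ has no real zeros in $\Gamma$, the point $z_m$ is a finite zero of (1.2); by Lemma 2.5 the gain satisfies $K\to+\infty$ as $x\to z_m^{+}$. By Lemma 3.13 the gain $K(x)$ is continuous on $(z_m,+\infty)$, because neither $p$ nor $p''$ vanishes there and so $K$ has no real pole in the interval. Using $K\to+\infty$ near $z_m$, I can then choose a point $x_1$ immediately to the right of $z_m$ with $K(x_1)>K_0$.

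Next I would bring in the hypothesis. By assumption there is a minimum real breakaway point $b_m\in(z_m,+\infty)$ with $K(b_m)\le K_0$; without loss of generality $x_1<b_m$. Restricting the continuous function $K$ to the closed subinterval $[x_1,b_m]$, we have $K(x_1)>K_0\ge K(b_m)$, so the Intermediate Value Theorem yields a point $x_*\in[x_1,b_m]\subset(z_m,+\infty)$ with $K(x_*)=K_0$ (if $K(b_m)=K_0$ we may simply take $x_*=b_m$). Because $x_*$ lies on the $2q\pi$-degree root locus $(z_m,+\infty)$, the identity $K(x_*)=K_0$ forces $x_*$ to be a real root of $\Delta$, which proves the lemma.

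The argument is short precisely because the structural results do the heavy lifting: Lemma 3.13 (continuity of $K$), Lemma 2.5 together with Theorem 2.6 (the gain blows up at the zero $z_m$), and the identification of $\{K=K_0\}$-points on a $2q\pi$-degree locus with roots of $\Delta$. The only point I expect to require care---and the one I would state explicitly---is that the presence of standard real breakaway points inside $(z_m,+\infty)$ means the interval may decompose into several root-locus segments along which $K$ fails to be monotone. This does not obstruct the Intermediate Value Theorem, since only continuity of $K$ on the whole interval (not monotonicity) is needed, and every point of the interval still belongs to a $2q\pi$-degree locus, so that $K=K_0$ still forces $\Delta=0$.
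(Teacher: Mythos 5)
Your proposal is correct and follows essentially the same route as the paper's proof: continuity of $K$ on $(z_m,+\infty)$ (Lemma 3.13), the gain tending to $+\infty$ at the zero $z_m$, and the Intermediate Value Theorem applied between a point near $z_m$ and the minimum real breakaway point $b_m$ with $K(b_m)\le K_0$ to produce $x_*$ with $K(x_*)=K_0$, hence a real root of $\Delta$. The extra care you take in verifying that $(z_m,+\infty)$ is a $2q\pi$-degree root locus (so that $K=K_0$ really corresponds to a root of $\Delta$) and that non-monotonicity caused by breakaway points does not obstruct the argument is consistent with, and slightly more explicit than, the paper's own exposition.
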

\begin{proof}
At the zero $z_m$, the gain is $+\infty$, while at positive infinity the gain is $K_{+\infty}=\frac{n}{n-1}$. In the interval $(z_m,+\infty)$, if a standard real breakaway point exists, and there is at least one minimum real breakaway point $b_m$, $K(b_m)\le K_0$, then since $K(x)$ is continuous and attains all positive real values from $+\infty$ to $K(b_m)$, by the Intermediate Value Theorem, there must exist at least one point $x_{*}$ where $K(x_{*})=K_0$. This implies  $\Delta$ must have a root in $(z_m,+\infty)$.
\end{proof}

By repeating the proof of Lemma 3.13, we can establish Lemma 3.15.
\begin{lemma}
For $PP\in \Gamma_{23}$, if either:

1. no standard real breakaway point exists in $(-\infty, z_s)$, or

2. all minimum real breakaway points in $(-\infty, z_s)$ have gains greater than $K_0$,

then the polynomial $\Delta$ has no roots in $(-\infty, z_s)$.
\end{lemma}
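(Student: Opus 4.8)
The plan is to mirror the proof of Lemma 3.13, transporting its argument from the right infinite interval $(z_m,+\infty)$ to the left infinite interval $(-\infty,z_s)$. First I would record the two boundary gain values. Since $PP\in\Gamma_{23}$, the polynomial $p''$ has real zeros to the left of $p_0$, so the smallest such zero $z_s$ exists and is a zero of (1.2); hence $K(z_s)=+\infty$ by Lemma 2.5. At the negative infinity endpoint, Lemma 3.2 gives $K_{-\infty}=\frac{n}{n-1}=K_0$. Because $p$ has no real zeros and, on $(-\infty,z_s)$, $p''$ has no real zeros (as $z_s$ is the smallest zero of $p''$), Lemma 3.12 guarantees that $K(x)$ is continuous throughout $(-\infty,z_s)$. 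Finally, recall that on a $2q\pi$-degree root locus of (1.2) a point is a root of $\Delta$ precisely when its gain equals $K_0$; since $(-\infty,z_s)$ is by hypothesis the left infinite interval of $2q\pi$ degree, establishing $K(x)>K_0$ for every finite $x$ in the interval is exactly what is needed to conclude that $\Delta$ has no root there.

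Next I would split into the two cases of the hypothesis. In Case~1, where no standard real breakaway point lies in $(-\infty,z_s)$, the interval carries a single unbranched $2q\pi$-degree root locus, so by Theorem 2.8 the gain is strictly monotonic along it. With boundary values $K_0$ at $-\infty$ and $+\infty$ at $z_s$, strict monotonicity forces $K(x)$ to range over the open interval $(K_0,+\infty)$; in particular $K(x)>K_0$ at every finite point, so the equation $K(x)=K_0$ has no solution and $\Delta$ has no root in $(-\infty,z_s)$.

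In Case~2, standard real breakaway points may be present, but every minimum real breakaway point has gain strictly greater than $K_0$. Here $K$ is continuous with $K\to K_0$ as $x\to-\infty$ and $K\to+\infty$ as $x\to z_s^-$, and by Lemma 3.1 its only interior local minima occur at the minimum real breakaway points, all of which exceed $K_0$. I would argue that this forces $K(x)>K_0$ at every finite $x$: if instead $K(x_1)\le K_0$ at some finite $x_1$, then since both boundary limits are at least $K_0$, the function would attain a local minimum of value $\le K_0$ on a compact subinterval around $x_1$, which would be a minimum real breakaway point of gain $\le K_0$, contradicting the hypothesis. Consequently $K_0$ is only approached at the excluded endpoint $-\infty$, no finite point satisfies $K(x)=K_0$, and $\Delta$ has no root in $(-\infty,z_s)$.

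The main (and essentially only) obstacle is the monotonicity-free estimate in Case~2: converting the statement ``all interior minima exceed $K_0$ while the boundary limit equals $K_0$'' into the pointwise bound $K(x)>K_0$. This is the same extremal-value reasoning that underlies Lemma 3.13, and it rests on Lemma 3.1, which certifies that standard real breakaway points are genuine extrema of the gain, so that any dip of $K$ to or below $K_0$ must register as a minimum real breakaway point of gain $\le K_0$. Apart from this point, the proof is a direct reflection of Lemma 3.13, with $z_s$ playing the role of $z_m$ and $-\infty$ the role of $+\infty$.
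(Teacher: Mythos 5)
Your proposal is correct and follows essentially the same route as the paper, which simply establishes this lemma by repeating the proof of Lemma 3.13 with $z_s$ and $-\infty$ in place of $z_m$ and $+\infty$. Your Case~2 argument (ruling out a dip of $K$ to or below $K_0$ via the fact that interior minima must be minimum real breakaway points, per Lemma 3.1) is in fact a more carefully spelled-out version of the step the paper states only tersely.
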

Similarly, Lemma 3.16 follows from the proof method of Lemma 3.14:
\begin{lemma}
For $PP\in \Gamma_{23}$, in the left infinite interval $(-\infty, z_s)$ of $2q\pi$ degree, if a standard real breakaway point exists, and there exists at least one minimum real breakaway point $b_s$ such that $K(b_s)\le K_0$, then the polynomial $\Delta$ must have roots in $(-\infty, z_s)$.
\end{lemma}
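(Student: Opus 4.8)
The plan is to mirror the argument of Lemma 3.14, which handled the right infinite interval $(z_m,+\infty)$, by transplanting it to the left infinite interval $(-\infty,z_s)$. The whole proof should reduce to an Intermediate Value Theorem argument for the continuous gain function $K(x)$ restricted to a single $2q\pi$-degree root locus. So first I would pin down the relevant values of $K$ at the finite endpoint $z_s$, at negative infinity, and at the hypothesized breakaway point $b_s$, and then invoke continuity together with the IVT to force a point whose gain is exactly $K_0$.

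First I would record the boundary data. Since $PP\in\Gamma_{23}\subseteq\Gamma\subseteq\Lambda_2$, the polynomial $p$ has no real zeros, so the only real zeros of $PP=\frac{p''p}{(p')^2}$ arise from zeros of $p''$; in particular $z_s$ is a real zero of (1.2), whence $K(z_s)=+\infty$ by Lemma 2.5. At the far end, Lemma 3.3 gives $K_{-\infty}=\frac{n}{n-1}=K_0$. By hypothesis the entire interval $(-\infty,z_s)$ is a $2q\pi$-degree root locus, and because $z_s$ is the smallest real zero of $p''$ to the left of $p_0$, there are no real zeros of $p''$ (nor of $p$) in $(-\infty,z_s)$; hence $K(x)$ has no real poles there and is continuous on $(-\infty,z_s)$ by Lemma 3.12.

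Next I would exploit the given minimum real breakaway point $b_s$ with $K(b_s)\le K_0$. Restricting attention to the subinterval $(b_s,z_s)$, the gain $K$ is continuous and runs from $K(b_s)\le K_0$ at the left endpoint up to $+\infty$ as $x\to z_s$. By the Intermediate Value Theorem there exists $x_{*}\in(b_s,z_s)$ with $K(x_{*})=K_0$. Since $x_{*}$ sits on a $2q\pi$-degree root locus, the equality $K(x_{*})=K_0$ means, by the remark following (1.2), that $x_{*}$ is a genuine real root of $\Delta$; thus $\Delta$ has a root in $(-\infty,z_s)$, as claimed.

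The hard part will not be the IVT step itself but the bookkeeping guaranteeing that it yields a bona fide root of $\Delta$. One must ensure the intermediate value is attained on a segment that is actually a $2q\pi$-degree root locus, so that gain $K_0$ corresponds to a root of $\Delta$ rather than a spurious $2q\pi+\pi$ point, and that the monotonicity emanating from $b_s$ is oriented so that the branch heading toward $z_s$ carries gains increasing to $+\infty$. Both are secured by the standing hypotheses — $(-\infty,z_s)$ being a $2q\pi$-degree root locus and $b_s$ being a \emph{minimum} real breakaway point — together with $K(z_s)=+\infty$; once these are verified the conclusion is immediate.
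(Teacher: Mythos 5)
Your proposal is correct and follows essentially the same route as the paper, which proves this lemma simply by transplanting the Intermediate Value Theorem argument of Lemma 3.14 (gain $+\infty$ at the zero $z_s$, gain $\le K_0$ at the minimum real breakaway point $b_s$, continuity of $K$ in between) from $(z_m,+\infty)$ to $(-\infty,z_s)$. Your additional bookkeeping about the interval being a $2q\pi$-degree root locus matches the paper's standing hypotheses and adds nothing contradictory.
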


\begin{lemma}
For $PP \in \Gamma_{21} \cup \Gamma_{23}$, in the finite interval $(z_1, z_2)$ of $2q\pi$ degree, if all minimum real breakaway points have gains greater than $K_0$, then the polynomial $\Delta$ has no roots in $(z_1, z_2)$.
\end{lemma}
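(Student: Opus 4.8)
The plan is to reduce the statement to a claim about the minimum of the gain and then read off the conclusion from the hypothesis. By assumption the interval $(z_1,z_2)$ is a $2q\pi$-degree root locus of (1.2), so on it the rational function $PP=\frac{p''p}{(p')^2}$ is positive real. Consequently the gain there equals $K(x)=\frac{(p'(x))^2}{p''(x)p(x)}$ without the absolute value, and since $p''p>0$ on this locus we have the equivalence $K(x)=\frac{n}{n-1}=K_0\iff (n-1)(p'(x))^2=np(x)p''(x)\iff\Delta(x)=0$. Thus it suffices to show $K(x)\neq K_0$ throughout $(z_1,z_2)$. First I would record the boundary behaviour: $z_1$ and $z_2$ are adjacent real zeros of $p''$, hence zeros of $PP$ (equivalently of (1.2)), so by Lemma 2.5 and Theorem 2.6 the gain blows up, $K(z_1)=K(z_2)=+\infty$; and since $p$ and $p''$ have no real zeros inside the interval, Lemma 3.12 gives that $K(x)$ is continuous on $(z_1,z_2)$.

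The core step is to locate the minimum of $K$. Because $K$ is continuous on $(z_1,z_2)$ and tends to $+\infty$ at both endpoints, its infimum over the interval is finite and is attained at some interior point $b_{\min}$. At $b_{\min}$ the gain has a global, hence local, minimum, so $\frac{dK}{ds}\mid_{s=b_{\min}}=0$; by Theorem 2.19 this makes $b_{\min}$ a real finite breakaway point of (1.2). Since $K$ decreases approaching $b_{\min}$ from the left and increases to its right, the monotonicity of the gain reverses there, so by the definition of standard breakaway point $b_{\min}$ is a \emph{standard} real breakaway point at which $K$ attains a minimum, that is, a minimum real breakaway point; the existence of such points in a finite $2q\pi$-degree interval is also guaranteed by Lemma 3.11. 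In particular $\min_{x\in(z_1,z_2)}K(x)=K(b_{\min})$ with $b_{\min}$ a minimum real breakaway point.

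The conclusion is then immediate from the hypothesis. By assumption every minimum real breakaway point of (1.2) in $(z_1,z_2)$ has gain strictly greater than $K_0$; applying this to $b_{\min}$ yields $K(x)\ge K(b_{\min})>K_0$ for all $x\in(z_1,z_2)$. Hence no point of the interval has gain equal to $K_0$, and by the reduction of the first paragraph $\Delta$ has no root in $(z_1,z_2)$.

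I expect the delicate points to be bookkeeping rather than substance. One must invoke the $2q\pi$-degree hypothesis to guarantee $PP>0$, so that gain $=K_0$ is genuinely equivalent to $\Delta=0$: on a $2q\pi+\pi$ locus the same gain would instead solve $(n-1)(p')^2+npp''=0$, which is a different polynomial. One must also rule out the global minimizer being a \emph{non-standard} breakaway point; this is handled by the characterization that the gain's monotonicity is unchanged across a non-standard breakaway point, so a genuine extremum of $K$ can occur only at a standard breakaway point, which together with Theorem 2.19 pins $b_{\min}$ down as a minimum real breakaway point.
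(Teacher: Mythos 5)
Your argument is correct and follows essentially the same route as the paper: gain equal to $+\infty$ at the endpoints $z_1,z_2$, continuity of $K$ inside the interval, and the hypothesis that every minimum real breakaway point has gain exceeding $K_0$, forcing $K(x)>K_0$ throughout and hence no roots of $\Delta$. You are somewhat more careful than the paper in justifying why the global minimum of $K$ is attained at a minimum real breakaway point (via Theorem 2.19 and the standard/non-standard distinction) and in spelling out the equivalence $K=K_0\iff\Delta=0$ on a $2q\pi$-degree locus, but these are refinements of the same proof rather than a different approach.
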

\begin{proof}
At the endpoints $z_1$ and $z_2$ of $(z_1, z_2)$, the gain is $+\infty$. If all minimum real breakaway points have gains greater than $K_0$, $K(x)$ is continuous in $(z_1, z_2)$. Then,  in the finite interval $(z_1, z_2)$ of $2q\pi$ degree, $K(x)$ attains all positive real values from $+\infty$ to a value greater than $K_0$, so the gain at all points never equals $K_0$. By the Intermediate Value Theorem, $\Delta$  has no roots in this interval.
\end{proof}

\begin{lemma}
For $PP\in \Gamma_{21}\cup \Gamma_{23}$, in the finite interval $(z_1, z_2)$ of $2q\pi$ degree, if there exists at least one minimum real breakaway point $b_z$ such that $K(b_z)\le K_0$, then the polynomial $\Delta$ must have a root in $(z_1, z_2)$.
\end{lemma}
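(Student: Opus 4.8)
The plan is to reduce the statement to a single application of the Intermediate Value Theorem for the gain function $K$ on the finite interval $(z_1, z_2)$, exactly mirroring the argument for the right infinite interval in Lemma 3.14. First I would record the boundary behaviour of the gain: since $z_1$ and $z_2$ are adjacent real zeros of $p''$, they are zeros of (1.2), so by Lemma 2.5 we have $K(z_1) = K(z_2) = +\infty$. By Lemma 3.12, $K(x)$ is continuous throughout $(z_1, z_2)$, because $p$ and $p''$ have no real zeros in the open interval and hence $K$ has no real poles there.

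Next I would exploit the hypothesis to pin down a low value of the gain. By assumption there is a minimum real breakaway point $b_z \in (z_1, z_2)$ with $K(b_z) \le K_0$. Since $K(x) \to +\infty$ as $x \to z_1^{+}$, I can select a point $x_1 \in (z_1, b_z)$ with $K(x_1) > K_0$. Then $K$ is continuous on the closed subinterval $[x_1, b_z]$, with $K(x_1) > K_0$ and $K(b_z) \le K_0$; the Intermediate Value Theorem therefore produces a point $x_{*} \in [x_1, b_z] \subset (z_1, z_2)$ with $K(x_{*}) = K_0$. In the degenerate case $K(b_z) = K_0$ one simply takes $x_{*} = b_z$.

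Finally I would translate $K(x_{*}) = K_0$ back into a statement about $\Delta$. Since $(z_1, z_2)$ is a $2q\pi$-degree root locus of (1.2), the correspondence established immediately after equation (1.2) shows that the point $x_{*}$, carrying gain $K_0$ on a $2q\pi$-degree root locus, is a root of $\Delta$. Hence $\Delta$ has a root in $(z_1, z_2)$, as claimed.

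I do not anticipate a genuine obstacle, as this is the finite-interval analogue of Lemma 3.14 and every ingredient is already available: continuity from Lemma 3.12, the infinite boundary values from Lemma 2.5, and the equivalence between $K = K_0$ on a $2q\pi$-degree locus and a root of $\Delta$. The only point requiring mild care is that the endpoint gain is $+\infty$ rather than a finite number, so the Intermediate Value Theorem should be applied on the slightly shrunken subinterval $[x_1, b_z]$ rather than on $[z_1, b_z]$ directly; this is a routine adjustment and does not affect the conclusion.
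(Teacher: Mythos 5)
Your proposal is correct and follows essentially the same route as the paper: continuity of $K$ on $(z_1,z_2)$ (Lemma 3.12), the value $+\infty$ at the endpoints, and the Intermediate Value Theorem applied between a point of large gain and the minimum breakaway point $b_z$ with $K(b_z)\le K_0$ to locate $x_*$ with $K(x_*)=K_0$, hence a root of $\Delta$ on the $2q\pi$-degree locus. Your explicit shrinking to the subinterval $[x_1,b_z]$ is a minor tidying of the paper's informal "takes all values from $+\infty$ to $K(b_z)$" step, not a different argument.
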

\begin{proof}
In the finite interval $(z_1, z_2)$ of $2q\pi$ degree, $K(x)$ is continuous.
At the endpoints $z_1$ and $z_2$, the gain is $+\infty$. If there exists at least one minimum real breakaway point $b_z$ such that $K(b_z)\le K_0$, then $K(x)$  takes all positive real values from $+\infty$ to $K(b_z)$. By the Intermediate Value Theorem, there must exist at least one point $x_{*}$ where $K(x_{*})=K_0$, ensuring the existence of a root of $\Delta$ in $(z_1, z_2)$.
\end{proof}

\begin{lemma}
When $PP\in \Gamma_{2121}\cup\Gamma_{2122}$, let $z_4$ denote the real zero of $p^{''}$ adjacent to $p_0$ and to the right of $p_0$. In the interval $(p_0, z_4)$ and $(-\infty, p_0)$, then there are no roots of $\Delta$.
\end{lemma}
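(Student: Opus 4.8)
The plan is to recast the claim through the gain. By the correspondence of Section~2, a point of a $2q\pi$-degree root locus of (1.2) is a root of $\Delta$ exactly when the gain $K(x)=\frac{(p'(x))^2}{p''(x)p(x)}$ equals $K_0=\frac{n}{n-1}$, so it suffices to control $K$ on the two intervals. First I would fix signs: $p>0$ on $\mathbb{R}$ (even degree, no real zeros), and since $p'$ has the unique simple real zero $p_0$, this $p_0$ is the global minimum of $p$, giving $p''(p_0)>0$. As $PP\in\Gamma_{2121}\cup\Gamma_{2122}\subseteq\Gamma_{21}$, $p''$ has no real zero on $(-\infty,p_0)$ and $z_4$ is its nearest real zero to the right of $p_0$; hence $p''>0$ on $(-\infty,p_0)$ and on $(p_0,z_4)$, so $PP>0$ there and, by Lemma~2.3, both intervals are $2q\pi$-degree root loci on which $K$ is continuous. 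I would then record the boundary data $K(p_0)=0$, $K_{\pm\infty}=K_0$ (Lemma~3.2), and $K(z_4^{-})=+\infty$.

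On $(-\infty,p_0)$ the intended argument mirrors Lemma~3.4: if this interval carries no standard real breakaway point it is a single locus and, by Theorem~2.8, $K$ is strictly monotone between $0$ and $K_0$; if breakaway points occur, then by Lemma~3.1 each is a gain extremum, and I would try to show that every maximum real breakaway point here has gain strictly below $K_0$, so that $K<K_0$ throughout the interior. The delicate point is that the hypotheses defining $\Gamma_{2121}$ and $\Gamma_{2122}$ restrict only the breakaway gains \emph{to the right} of $p_0$ and in the finite $2q\pi$-intervals; they say nothing about $(-\infty,p_0)$. Thus they do not by themselves exclude either a maximum real breakaway point of gain $\ge K_0$ in $(-\infty,p_0)$ or an approach $K\to K_0^{+}$ from above as $x\to-\infty$, and either of these would force $K$ through $K_0$ and create a root. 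Showing that the left side inherits a suitable gain bound from the right-sided hypotheses is the first real hurdle.

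The principal obstacle, however, is the interval $(p_0,z_4)$. Here $K$ is continuous on a single $2q\pi$-degree locus that runs from the pole $p_0$, where $K=0$, to the zero $z_4$, where $K\to+\infty$; by Theorem~2.6 and the intermediate value theorem $K$ must then assume \emph{every} positive value on $(p_0,z_4)$, in particular the value $K_0$, which is precisely a root of $\Delta$. This is the very pole-to-zero configuration that Lemma~3.8 used to \emph{produce} a root in $(z_3,p_0)$, so the claimed conclusion here is in direct tension with that earlier mechanism. Consequently the decisive step is to explain why $(p_0,z_4)$ should behave oppositely: one would have to show that the locus issuing from $p_0$ breaks away into $\mathbb{C}^{*}$ before $K$ reaches $K_0$ and that the sub-segment abutting $z_4$ lies on a locus of degree $2q\pi+\pi$ (on which $K=K_0$ does not yield a root of $\Delta$), or otherwise refine the gain/root correspondence on this segment. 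I expect that controlling $K$ strictly below $K_0$ on an interval whose right endpoint forces $K\to+\infty$ is the crux on which the lemma turns, and I would need to resolve this tension before the statement can be regarded as established.
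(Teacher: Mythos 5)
You do not arrive at a proof, and you are right not to: the obstruction you isolate on $(p_0,z_4)$ is genuine, not a tension that can be resolved in the lemma's favour. Your sign computation is correct: $p$ has constant sign, $p_0$ is the unique (simple) critical point and hence the global extremum of $p$, so $p(p_0)p''(p_0)>0$, and $p''$ has no zero on $(-\infty,z_4)$, so $PP=\frac{p''p}{(p')^2}>0$ on both $(-\infty,p_0)$ and $(p_0,z_4)$; both intervals therefore lie on $2q\pi$-degree loci. On $(p_0,z_4)$ the gain then runs continuously from $0$ at the double pole $p_0$ to $+\infty$ at the zero $z_4$ and must pass through $K_0$, producing a root of $\Delta$. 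One can see this without any root-locus machinery at all: $\Delta(p_0)=-n\,p(p_0)p''(p_0)<0$ while $\Delta(z_4)=(n-1)(p'(z_4))^2>0$, so $\Delta$ vanishes in $(p_0,z_4)$ whenever $z_4$ exists, which it does for every $PP\in\Gamma_{212}$. The statement is therefore false on every nonempty instance of $\Gamma_{2121}\cup\Gamma_{2122}$.

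The paper's own proof takes a different route, and that is where the error lies. It applies Lemma~2.3 together with the defining property of $\Gamma_{212}$ (an odd number of real zeros of $p''$ to the right of $p_0$) to declare $(p_0,z_4)$ and $(-\infty,p_0)$ to be $2q\pi+\pi$-degree loci, on which $\Delta$ trivially has no roots. But the real-axis parity rule counts zeros and poles with multiplicity, and since $p''(p_0)>0$ and $p''\to+\infty$, the real zeros of $p''$ on $(p_0,\infty)$ counted with multiplicity are necessarily even in number; the ``odd'' hypothesis can only refer to distinct zeros, and with the correct count the parity rule gives exactly the opposite classification, in agreement with your direct evaluation of the sign of $PP$. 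So your proposal does not close the gap you identify, but the gap is not closable: the defect is in the statement and in the paper's appeal to Lemma~2.3, not in your approach. Your secondary worry, that the hypotheses of $\Gamma_{2121}$ and $\Gamma_{2122}$ place no constraint on breakaway gains in $(-\infty,p_0)$, is also well founded, but it is moot next to the failure on $(p_0,z_4)$.
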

\begin{proof}
To the right of the interval $(p_0,z_4)$, (1.2) has an odd number of real zeros. By Lemma 2.3, the interval $(p_0,z_4)$ is a $2q\pi+\pi$-degree root locus of (1.2), extending from a pole to a zero. Since the roots of $\Delta$ lie on the $2q\pi$-degree root locus, $(p_0,z_4)$ cannot contain any roots of $\Delta$.

To the right of $p_0$, (1.2) has an odd number of real zeros. As $p_0$ is a second-order pole. To the right side of the interval $(-\infty,p_0)$, (1.2) has an odd number of real zeros and poles. To the left of $p_0$, $p^{''}$ has no real zeros. By Lemma 2.3, $(-\infty,p_0)$ is a $2q\pi+\pi$-degree root locus emitted from the pole $p_0$. Again, since the roots of $\Delta$ lie on the $2q\pi$-degree root locus, $(-\infty,p_0)$ cannot contain any roots of $\Delta$.
\end{proof}

There may exist real breakaway points of (1.2) in the intervals $(p_0, z_4)$ and $(-\infty, p_0)$. This implies that each interval contains multiple root loci rather than a single root locus. In the proof of Lemma 3.19, we only provided the proof that the intervals $(p_0, z_4)$ and $(-\infty, p_0)$ are single root loci. If there exist real breakaway points of (1.2) in the intervals $(p_0, z_4)$ and $(-\infty, p_0)$. The intervals $(p_0, z_4)$ and $(-\infty, p_0)$ still $2q\pi+\pi$-degree root loci of (1.2). This doesn't affect the proof and result of Lemma 3.19.

When $PP\in \Gamma_{2121}\cup\Gamma_{2122}$, $p^{''}$ has real zeros to the right of $p_0$. Equation (1.2) has exactly one real pole $p_0$ but multiple real zeros. Therefore, the interval $(z_m,+\infty)$ must exist. These real zeros may form finite intervals of $2q\pi$ degree. Additionally, there exists a real zero $z_4$ of (1.2) adjacent to $p_0$, meaning the interval $(p_0,z_4)$ must also exist.

The interval $(z_m,+\infty)$ may or may not contain standard real breakaway points and constitute the first type of interval. The intervals between real zeros of (1.2) necessarily contain minimum real breakaway points, while the finite intervals of $2q\pi$ degree constitute the second type of interval. The third type consists of intervals $(p_0,z_4)$ extending from the poles of (1.2) to its zeros. Therefore, in the proofs of results of rational functions in $\Gamma_{2121}$ and $\Gamma_{2122}$, all these three types of intervals need to be considered.

For the proofs of results concerning rational functions in $\Gamma_{2121}$ and $\Gamma_{2122}$, all three types of intervals must be considered. Since only these three types of intervals exist to the right of $p_0$, the proofs are restricted to these three cases.
\begin{lemma}
When $PP\in \Gamma_{2121}$, then $\sharp_r \Delta=0$.
\end{lemma}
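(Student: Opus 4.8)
The plan is to decompose the real axis into the intervals cut out by the real poles and zeros of (1.2) and to verify, interval by interval, that $K(x)=K_0$ has no finite solution lying on a $2q\pi$-degree root locus; since every real root of $\Delta$ is precisely such a point, this yields $\sharp_r \Delta=0$. Recall the structure forced by $PP\in\Gamma_{2121}$: the polynomial $p$ has no real zeros, $p'$ has the single simple real zero $p_0$ (so $p_0$ is the unique real pole of (1.2), of second order), while $p''$ has no real zeros to the left of $p_0$ and an odd number of them to its right, say $p_0<z_4<\cdots<z_m$, these being the real zeros of (1.2).

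First I would dispose of the two intervals adjacent to $p_0$: by Lemma 3.19, neither $(-\infty,p_0)$ nor $(p_0,z_4)$ contains a root of $\Delta$ (both are $2q\pi+\pi$-degree root loci, on which a point with $K=K_0$ is never a root of $\Delta$). Next I would treat the finite intervals $(z_i,z_{i+1})$ between consecutive real zeros of $p''$ to the right of $p_0$. By Lemma 2.3 each such interval is of $2q\pi$ or of $2q\pi+\pi$ degree according to the parity of the number of real poles and zeros to its right. On the $2q\pi+\pi$-degree finite intervals no root of $\Delta$ can occur, again because roots lie only on $2q\pi$-degree loci. On the $2q\pi$-degree finite intervals the defining property of $\Gamma_{2121}$ guarantees that every minimum real breakaway point has gain greater than $K_0$, so Lemma 3.17 rules out a root there as well.

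It remains to handle the right infinite interval $(z_m,+\infty)$, which is a $2q\pi$-degree root locus. Here the defining property of $\Gamma_{2121}$ states that either there is no standard real breakaway point or all minimum real breakaway points have gain exceeding $K_0$; in either case Lemma 3.13 shows that $(z_m,+\infty)$ contains no root of $\Delta$. Finally I would confirm that the boundary points themselves are not roots: at each real zero $z_i$ of $p''$ one has $\Delta(z_i)=(n-1)(p'(z_i))^2>0$, since $p'$ vanishes only at $p_0\neq z_i$; and at $p_0$ one has $\Delta(p_0)=-n\,p(p_0)\,p''(p_0)\neq 0$, since $p$ has no real zeros and $p''(p_0)\neq 0$. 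Collecting these cases over all intervals, and excluding the roots at infinity as agreed, gives $\sharp_r \Delta=0$.

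The step demanding the most care is the bookkeeping in the middle stage: tracking via Lemma 2.3 which finite intervals to the right of $p_0$ are of $2q\pi$ degree and which are of $2q\pi+\pi$ degree, and invoking the $\Gamma_{2121}$ hypothesis (all minimum real breakaway points having gain above $K_0$) on exactly the $2q\pi$-degree finite intervals where Lemma 3.17 applies, while dismissing the $2q\pi+\pi$-degree ones by the degree principle. Everything else reduces to direct citation of the interval lemmas already established and the elementary endpoint checks above.
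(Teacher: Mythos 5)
Your proposal follows essentially the same route as the paper's proof: dispose of $(-\infty,p_0)$ and $(p_0,z_4)$ via Lemma 3.19, rule out roots in the $2q\pi$-degree finite intervals and in $(z_m,+\infty)$ via Lemmas 3.17 and 3.13 together with the defining gain condition of $\Gamma_{2121}$, and conclude. Your two small additions --- explicitly dismissing the $2q\pi+\pi$-degree finite intervals by the degree principle and checking the boundary points $z_i$ and $p_0$ directly via $\Delta(z_i)=(n-1)(p'(z_i))^2>0$ and $\Delta(p_0)=-n\,p(p_0)\,p''(p_0)\neq 0$ --- are correct and tighten details the paper leaves implicit, but do not change the argument.
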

\begin{proof}
When $PP\in \Gamma_{2121}$, $p$ has no real zeros, and $p^{'}$ has exactly one real zero $p_0$. To the right of the real zero $p_0$, $p^{''}$  has an odd number of real zeros. Consequently, (2.1) has a real pole $p_0$. To the right of $p_0$, (2.1) has an odd number of real zeros. Let $z_4$ denote the real zero of $p^{''}$ adjacent to $p_0$ and to the right of $p_0$.

In the right infinite interval $(z_m,+\infty)$, there are no standard real breakaway points. Or, the gains of all minimum real breakaway points are greater than $K_0$. On the real axis, if finite intervals of $2q\pi$ degree exist to the right of $p_0$, in all such finite intervals, all minimum real breakaway points have gains greater than $K_0$. By Lemma 3.13 and Lemma 3.17, this implies:

1. $(z_m,+\infty)$ contains no roots of $\Delta$.

2. All $2q\pi$-degree finite intervals contain no roots of $\Delta$.

Thus, based on the two results, in the infinite interval $(z_4,+\infty)$ to the right of $z_4$, $\Delta$ has no roots.

By Lemma 3.19,  $(p_0,z_4)$  and $(-\infty,p_0)$ cannot contain any roots of $\Delta$.

Combining these results, we conclude that when $PP\in \Gamma_{2121}$, $\sharp_r \Delta=0$.
\end{proof}

\begin{lemma}
When $PP\in \Gamma_{2122}$, then $\sharp_r \Delta>0$.
\end{lemma}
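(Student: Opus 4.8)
The plan is to reduce the statement directly to the interval-by-interval results already established, namely Lemma 3.14 and Lemma 3.18, exploiting the disjunctive nature of the defining property of $\Gamma_{2122}$. First I would record the containment $\Gamma_{2122}\subseteq\Gamma_{212}\subseteq\Gamma_{21}$, so that every rational function $PP\in\Gamma_{2122}$ automatically satisfies the standing hypotheses of Lemmas 3.14 and 3.18: the polynomial $p$ has no real zeros, $p'$ has the single simple real zero $p_0$, and $p''$ has an odd number of real zeros to the right of $p_0$. In particular the right infinite interval $(z_m,+\infty)$ exists, and any $2q\pi$-degree finite interval to the right of $p_0$ is a genuine root locus of (1.2) whose endpoints are real zeros of $p''$, hence zeros of (1.2).

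Next I would unpack the definition of $\Gamma_{2122}$: there exists at least one minimum real breakaway point $b_{2122}$ with $K(b_{2122})\le K_0$ lying \emph{either} in the right infinite interval $(z_m,+\infty)$ \emph{or} in at least one finite interval of $2q\pi$ degree on the real axis to the right of $p_0$. This yields a clean two-case split. In the first case the hypotheses of Lemma 3.14 are met verbatim, since there is a minimum real breakaway point in $(z_m,+\infty)$ with gain at most $K_0$, so Lemma 3.14 supplies a root of $\Delta$ in $(z_m,+\infty)$. In the second case there is a finite $2q\pi$-degree interval $(z_1,z_2)$ containing a minimum real breakaway point with gain at most $K_0$, and Lemma 3.18 supplies a root of $\Delta$ in $(z_1,z_2)$. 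In either case $\Delta$ has at least one real root, so $\sharp_r\Delta>0$, which is the assertion.

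For transparency I would note that the mechanism underlying both invoked lemmas is identical: on a $2q\pi$-degree root locus the gain equals $+\infty$ at the bounding zeros of (1.2), and $K(x)$ is continuous on the relevant interval by Lemma 3.12; descending from $+\infty$ to the breakaway value $K(b_{2122})\le K_0$ forces, via the Intermediate Value Theorem, a point $x_{*}$ with $K(x_{*})=K_0$, and any such point is exactly a real root of $\Delta$.

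I do not expect a genuine obstacle here, since the analytic content is already packaged in Lemmas 3.14 and 3.18. The only point requiring care is to match each branch of the $\Gamma_{2122}$ alternative to the correct lemma, so that membership via the right infinite interval triggers Lemma 3.14 while membership via a finite $2q\pi$-degree interval triggers Lemma 3.18. The one subtlety worth keeping honest is that the breakaway hypothesis need hold in only \emph{one} such interval: a single application of the appropriate lemma already produces a root, so no global count of breakaway points and no parity information beyond membership in $\Gamma_{212}$ is needed to conclude.
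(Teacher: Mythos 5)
Your proposal is correct and follows essentially the same route as the paper's own proof: a case split on the disjunctive definition of $\Gamma_{2122}$, invoking Lemma 3.14 for the right infinite interval and Lemma 3.18 for a finite interval of $2q\pi$ degree. The only difference is that the paper additionally cites Lemma 3.19 to note that $(p_0,z_4)$ and $(-\infty,p_0)$ contain no roots of $\Delta$, which is not needed for the positivity conclusion.
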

\begin{proof}
When $PP\in \Gamma_{2122}$, $p$ has no real zeros, $p^{'}$ has  one real zero $p_0$. To the right of $p_0$, $p^{''}$ has an odd number of real zeros. (1.2) has a real pole $p_0$, and to its right, (1.2) has an odd number of real zeros. Let $z_4$ denote the real zero of $p^{''}$ adjacent to $p_0$ and to the right of $p_0$.

If there exists at least one minimum real breakaway point $b_{2122}$ in $(z_m,+\infty)$ such that $K(b_{2122})\le K_0$, then by Lemma 3.14, $(z_m,+\infty)$ contains at least one root of $\Delta$. Thus, $\sharp_r \Delta>0$.

If a finite interval of $2q\pi$ degree exists to the right of $p_0$, and within at least one such finite interval, there is at least one minimum real breakaway point $b_{2122}$ such that $K(b_{2122})\le K_0$, then by Lemma 3.18, this interval contains at least one root of $\Delta$. Hence, $\sharp_r \Delta>0$.

By Lemma 3.19,  $(p_0,z_4)$  and $(-\infty,p_0)$ cannot contain any roots of $\Delta$.

Combining these results, we conclude that when $PP\in \Gamma_{2122}$, $\Delta$ has at least one real root, so $\sharp_r \Delta>0$.
\end{proof}

\begin{lemma}
When $PP\in \Gamma_{211}$, then $\sharp_r \Delta>0$.
\end{lemma}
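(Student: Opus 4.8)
The plan is to localize a real root of $\Delta$ inside the single interval $(p_0, z_4)$ lying immediately to the right of the pole $p_0$, much as was done for $\Gamma_{22}$ in Lemma 3.8, but now using the parity hypothesis to force this interval onto a $2q\pi$-degree root locus. Recall that in $\Gamma_{211}\subseteq\Gamma_{21}$ the polynomial $p$ has no real zeros, $p'$ has the unique simple real zero $p_0$, and $p''$ has a (necessarily positive, since $\Gamma_{21}$ requires at least one) even number of real zeros to the right of $p_0$ and none to its left. Let $z_4$ denote the real zero of $p''$ adjacent to $p_0$ on the right; it exists because $\Gamma_{21}$ guarantees real zeros of $p''$ there. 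Since $p_0$ is a simple zero of $p'$, it is the unique real pole of (1.2), and it is of second order, so $K(p_0)=0$; and since $z_4$ is a real zero of $p''$ while $p$ has no real zeros, $z_4$ is a real zero of (1.2), so $K(x)\to+\infty$ as $x\to z_4^-$.

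First I would determine the degree of the interval $(p_0,z_4)$ via Lemma 2.3. For any test point in $(p_0,z_4)$, the real poles and zeros of (1.2) lying to its right are precisely the real zeros of $p''$ that lie to the right of $p_0$ (the only real pole $p_0$ lies to the left, and $p$ contributes no real zeros). By the defining property of $\Gamma_{211}$, this count is even, so by part~2 of Lemma 2.3 the interval $(p_0,z_4)$ is a $2q\pi$-degree root locus of (1.2). This is the crux of the argument and the one place where the even-parity hypothesis is used; it is also exactly what distinguishes $\Gamma_{211}$ from $\Gamma_{2121}$, where the odd count forced $(p_0,z_4)$ onto a $2q\pi+\pi$-degree locus in Lemma 3.19 and thereby excluded roots of $\Delta$.

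Finally I would apply the Intermediate Value Theorem. On the open interval $(p_0,z_4)$ the gain $K(x)=\abs{(p'(x))^2/(p''(x)p(x))}$ has no real poles, because neither $p$ nor $p''$ vanishes there ($z_4$ being the adjacent zero of $p''$), so $K$ is continuous on $(p_0,z_4)$ with $K(x)\to 0$ as $x\to p_0^+$ and $K(x)\to+\infty$ as $x\to z_4^-$. Hence $K$ attains every positive real value on $(p_0,z_4)$, and in particular there is a point $x_\ast\in(p_0,z_4)$ with $K(x_\ast)=K_0=\frac{n}{n-1}$. Because $(p_0,z_4)$ is a $2q\pi$-degree root locus, the point where the gain equals $K_0$ is a real root of $\Delta$, as noted after equation (1.2); therefore $\sharp_r\Delta\ge 1>0$. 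As in the remark following Lemma 3.19, the possible presence of breakaway points inside $(p_0,z_4)$ does not affect the argument: the whole interval remains a $2q\pi$-degree root locus and $K$ still sweeps all of $(0,+\infty)$, so the conclusion is unchanged. The only genuine subtlety, and the step I would verify most carefully, is the parity/degree determination of $(p_0,z_4)$; the remainder is a routine continuity-plus-IVT argument.
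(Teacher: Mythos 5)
Your proposal is correct and follows essentially the same route as the paper's proof of this lemma: use the even-parity count of real zeros of $p''$ to the right of $p_0$ together with Lemma 2.3 to conclude that $(p_0,z_4)$ is a $2q\pi$-degree root locus, then apply the Intermediate Value Theorem to the continuous gain $K$, which runs from $0$ at the pole $p_0$ to $+\infty$ at the zero $z_4$, to produce a point with $K=K_0$ and hence a real root of $\Delta$. Your closing remark about breakaway points not affecting the argument likewise matches the paper's own remark following its proof.
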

\begin{proof}
When $PP\in \Gamma_{211}$, $p$ has no real zeros, and $p^{'}$ has exactly one real zero $p_0$. To the right of $p_0$, $p^{''}$ has an even number of real zeros. Consequently, (1.2) has a real pole $p_0$, and to its right, (1.2) has an even number of real zeros. Let $z_4$ denote the real zero of $p^{''}$ adjacent to $p_0$ and to the right of $p_0$.

On the real axis to the right of $p_0$, (1.2) has an even number of real zeros. By Lemma 2.3, the interval $(p_0,z_4)$ is a $2q\pi$-degree root locus of (1.2) from a pole to a zero. At the zero $z_4$, the gain $K(z_4)= +\infty$, while at the pole $p_0$, the gain $K(p_0)=0$. Since $(p_0,z_4)$ contains no zeros or poles of (1.2) and $K(x)$ is continuous, $K(x)$ takes all positive real values from $+\infty$ to $0$. Therefore, there exists at least one point in $(p_0,z_4)$ with gain $K_0$, implying that $\Delta$ has at least one root in this interval. Thus, on the real axis to the right of $p_0$, $\Delta$ has at least one root. When $PP\in \Gamma_{211}$, $\sharp_r \Delta>0$.
\end{proof}

There may exist real breakaway points of (1.2) in the interval $(p_0,z_4)$. This implies that $(p_0,z_4)$ contains multiple root loci rather than a single root locus. In the proof of Lemma 3.22, we only provided the proof that the interval $(p_0, z_4)$ is a single root locus. If there exist real breakaway points of (1.2) in the interval $(p_0,z_4)$. In the interval $(p_0,z_4)$, $K(x)$ still attains all positive real values from $0$ to $+\infty$. This doesn't affect the proof and result of Lemma 3.22.

Repeating proof of Lemma 3.22, we can establish Lemma 3.23.

\begin{lemma}
When $PP\in \Gamma_{231}$, then $\sharp_r \Delta>0$.
\end{lemma}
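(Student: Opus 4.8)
The plan is to reproduce the local argument of Lemma 3.22 on the right of $p_0$. The defining feature separating $\Gamma_{231}$ from $\Gamma_{211}$ is that $p''$ now also has real zeros to the \emph{left} of $p_0$, but this plays no role in an argument confined to the interval immediately to the right of $p_0$, and that is precisely why the same proof works. First I would record the structure of (1.2) for $PP\in\Gamma_{231}$: the polynomial $p$ has no real zeros, $p'$ has the single simple real zero $p_0$ so that $p_0$ is a double real pole of (1.2), and to the right of $p_0$ the polynomial $p''$ has an even (hence nonzero) number of real zeros. Since $p$ never vanishes on $\mathbb{R}$, these are exactly the real zeros of (1.2) to the right of $p_0$. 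Let $z_4$ be the smallest of them, i.e. the real zero of (1.2) adjacent to $p_0$ on the right, so that the open interval $(p_0,z_4)$ contains no zero or pole of (1.2).

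Next I would show that $(p_0,z_4)$ is a $2q\pi$-degree root locus of (1.2) using Lemma 2.3. For a test point in $(p_0,z_4)$, the real poles and zeros of (1.2) lying to its right are precisely the real zeros of $p''$ to the right of $p_0$; the double pole $p_0$ and every left-hand zero of $p''$ lie to the left of the test point and do not enter the count. By the hypothesis defining $\Gamma_{231}$ this number is even, so Lemma 2.3 forces $(p_0,z_4)$ to be a $2q\pi$-degree root locus running from the pole $p_0$ to the zero $z_4$. This is the one step where the parity hypothesis is used, and it is the only place where the argument could fail: the entire content is the observation that left-hand zeros of $p''$ do not alter the right-hand parity count, so near $p_0$ the set $\Gamma_{231}$ behaves exactly like $\Gamma_{211}$.

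Finally I would invoke the gain estimate. The gain of (1.2) is $K(x)=\abs{\frac{(p'(x))^2}{p''(x)p(x)}}$, which is continuous on $(p_0,z_4)$ because neither $p$ nor $p''$ vanishes there. By Lemma 2.5 and Theorem 2.6 the gain equals $0$ at the pole $p_0$ and $+\infty$ at the zero $z_4$, so the Intermediate Value Theorem produces a point $x_*\in(p_0,z_4)$ with $K(x_*)=K_0=\frac{n}{n-1}$. Since $x_*$ lies on a $2q\pi$-degree root locus, it is a real root of $\Delta$, whence $\sharp_r\Delta>0$.

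The single technical caveat, handled exactly as in the remark following Lemma 3.22, is that $(p_0,z_4)$ might in fact be traversed by several $2q\pi$-degree root loci separated by real breakaway points rather than by a single locus. This does not matter: on those loci the gain still sweeps through every positive value between $0$ and $+\infty$, so the Intermediate Value step survives unchanged. I do not expect a genuine obstacle here. The real difficulty of $\Gamma_{23}$ is concentrated in the odd-parity subsets $\Gamma_{232},\Gamma_{2321},\Gamma_{2322}$, where no zero-to-pole $2q\pi$-locus on the right is guaranteed; in $\Gamma_{231}$ the even parity hands such a locus to us for free.
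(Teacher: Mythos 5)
Your proposal is correct and follows exactly the route the paper intends: the paper proves Lemma 3.23 simply by stating that one repeats the proof of Lemma 3.22, i.e.\ the even right-hand parity makes $(p_0,z_4)$ a $2q\pi$-degree root locus on which the gain runs from $0$ at the pole $p_0$ to $+\infty$ at the zero $z_4$, so the Intermediate Value Theorem yields a point with gain $K_0$ and hence a real root of $\Delta$. Your explicit observation that the left-hand zeros of $p''$ do not enter the right-hand parity count, and your caveat about possible breakaway points inside $(p_0,z_4)$, match the paper's own remarks following Lemma 3.22.
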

Repeating the proof of Lemma 3.19, we can establish Lemma 3.24.
\begin{lemma}
When $PP\in \Gamma_{2321}\cup\Gamma_{2322}$, let $z_3$ denote the real zero of $p^{''}$ adjacent to $p_0$ and to the left of $p_0$. Let $z_4$ denote the real zero of $p^{''}$ adjacent to $p_0$ and to the right of $p_0$. In the interval $(z_3,p_0)$ and $(p_0,z_4)$, then there are no roots of $\Delta$.
\end{lemma}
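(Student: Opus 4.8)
The plan is to mirror the proof of Lemma 3.19, exploiting the principle (established in Section 2) that every real root of $\Delta$ must lie on a $2q\pi$-degree root locus of (1.2); hence it suffices to show that both $(z_3,p_0)$ and $(p_0,z_4)$ are $2q\pi+\pi$-degree root loci, since such loci carry no roots of $\Delta$. First I would record the structural data of $\Gamma_{232}$: since $p$ has no real zeros, (1.2) has no real pole and no real zero arising from $p$; since $p'$ has the single simple real zero $p_0$, the function (1.2) has exactly one real pole, the second-order pole $p_0$; and $p''$ has real zeros on both sides of $p_0$, with an odd number lying to the right. By construction $z_4$ is the smallest real zero of $p''$ exceeding $p_0$ and $z_3$ is the largest real zero of $p''$ below $p_0$, so the open intervals $(z_3,p_0)$ and $(p_0,z_4)$ contain no zero or pole of (1.2) in their interiors.

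Next I would apply the parity criterion of Lemma 2.3 to each interval, counting the real poles and zeros of (1.2), with multiplicity, lying to the right of an interior point. For $x\in(p_0,z_4)$ the only real pole $p_0$ lies to the left of $x$ and contributes nothing, while the zeros to the right of $x$ are precisely the real zeros of $p''$ to the right of $p_0$ (since $z_4$ is the first such zero and no zero of $p''$ lies in $(p_0,z_4)$). This count is odd, so by Lemma 2.3 the interval $(p_0,z_4)$ is a $2q\pi+\pi$-degree root locus; this step is the verbatim analogue of the $(p_0,z_4)$ argument of Lemma 3.19, which treats the same parity case $\Gamma_{212}$.

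For $x\in(z_3,p_0)$ the count to the right now includes the second-order pole $p_0$, which contributes $2$, together with the odd number of real zeros of $p''$ to the right of $p_0$; because $z_3$ is the last real zero of $p''$ before $p_0$, no further zero of $p''$ lies in $(x,p_0)$. The total is thus $2$ plus an odd number, which is again odd, so Lemma 2.3 shows that $(z_3,p_0)$ is likewise a $2q\pi+\pi$-degree root locus. Since the roots of $\Delta$ lie only on $2q\pi$-degree root loci, neither interval contains a root of $\Delta$, which is exactly the claim. As in the remark following Lemma 3.19, I would add that the possible presence of real breakaway points inside these intervals is harmless: such points merely split an interval into several segments of the same $2q\pi+\pi$ degree, so the parity, and hence the absence of roots of $\Delta$, is preserved.

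The proof is largely bookkeeping, so the main obstacle is getting the parity counts exactly right. The delicate points are the correct treatment of the double pole $p_0$ (contributing $2$, not $1$) and the convention that an interior point of an interval sees its own right endpoint among the poles and zeros lying to its right. It is precisely the adjacency of $z_3$ and $z_4$ to $p_0$ that guarantees no intervening zero of $p''$ is overlooked, and it is this careful accounting that forces both totals to be odd; this is the one step I would verify most scrupulously before concluding.
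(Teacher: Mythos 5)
Your proposal is correct and follows essentially the same route as the paper, which simply repeats the argument of Lemma 3.19: apply the parity criterion of Lemma 2.3 to show that both $(p_0,z_4)$ (odd count of zeros of $p''$ to the right) and $(z_3,p_0)$ (that odd count plus the second-order pole $p_0$, still odd) are $2q\pi+\pi$-degree root loci, on which no roots of $\Delta$ can lie. Your explicit bookkeeping of the double pole and of the adjacency of $z_3$, $z_4$ to $p_0$ is exactly the intended argument.
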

When $PP\in \Gamma_{2321}\cup\Gamma_{2322}$, the intervals to the right of $p_0$ are the same as those for $PP\in \Gamma_{2121}\cup\Gamma_{2122}$, and thus we omit their discussion here.

For $PP\in \Gamma_{2321}\cup\Gamma_{2322}$, $p^{''}$ has real zeros to the left of $p_0$. Equation (1.2) has exactly one real pole $p_0$, but multiple real zeros. The interval $(-\infty, z_s)$ may be of $2q\pi$ degree, and these real zeros may form finite intervals of $2q\pi$ degree. Additionally, there exists a real zero $z_3$ of (1.2) adjacent to $p_0$, implying the existence of the interval $(z_3, p_0)$.

The interval $(-\infty, z_s)$ of $2q\pi$ degree may or may not contain standard real breakaway points and constitute the first type of interval. The intervals between real zeros of (1.2) necessarily contain minimum real breakaway points, and the finite intervals of $2q\pi$ degree constitute the second type of interval. The third type consists of intervals $(z_3,p_0)$ extending from its pole $p_0$ to the zeros of (1.2). Therefore, in the proofs of results of rational functions in $\Gamma_{2321}$ and $\Gamma_{2322}$, all these three types of intervals need to be considered.

For the proofs of results concerning rational functions in $\Gamma_{2321}$ and $\Gamma_{2322}$, all three types of intervals must be considered. Since only these three types of intervals exist to the left of $p_0$, the proofs are restricted to these three cases.

\begin{lemma}
When $PP\in \Gamma_{2321}$, then $\sharp_r \Delta=0$.
\end{lemma}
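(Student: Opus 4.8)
The plan is to show that on every interval of the real axis cut out by the real zeros of $p''$ and by the pole $p_0$, the gain $K(x)$ never equals $K_0$ at a point lying on a $2q\pi$-degree root locus; since a real root of $\Delta$ is precisely such a point, this gives $\sharp_r\Delta=0$. Because $PP\in\Gamma_{2321}\subset\Gamma_{232}\subset\Gamma_{23}$, the polynomial $p$ has no real zeros, $p'$ has the single simple real zero $p_0$ (so $p_0$ is a double real pole of (1.2)), and $p''$ has real zeros on both sides of $p_0$ with an odd number of them to the right of $p_0$. I would therefore partition the real axis into the intervals determined by these zeros and by $p_0$, classify each as $2q\pi$ or $2q\pi+\pi$ degree via Lemma 2.3, and check that none carries a point of gain $K_0$ on a $2q\pi$-degree locus.

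First I would dispose of the two intervals flanking $p_0$: by Lemma 3.24, neither $(z_3,p_0)$ nor $(p_0,z_4)$ contains a root of $\Delta$, both being $2q\pi+\pi$-degree once the double pole at $p_0$ and the odd count of right-hand zeros are tallied. For the region $(z_4,+\infty)$ the interval structure and the hypotheses are literally those of $\Gamma_{2121}$: the defining conditions of $\Gamma_{2321}$ guarantee that in $(z_m,+\infty)$ there is either no standard real breakaway point or all minimum real breakaway points have gain exceeding $K_0$, and that every finite $2q\pi$-degree interval to the right of $p_0$ has all its minimum real breakaway gains above $K_0$. Applying Lemma 3.13 to $(z_m,+\infty)$ and Lemma 3.17 to each finite $2q\pi$-degree interval rules out a gain of $K_0$ in each, while every $2q\pi+\pi$-degree interval carries no root of $\Delta$ by definition. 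Hence $\Delta$ has no root in $(z_4,+\infty)$, exactly as in Lemma 3.20.

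The genuinely new content is the region $(-\infty,z_3)$, where $p''$ now has real zeros, and I would treat it symmetrically. Each finite $2q\pi$-degree interval between adjacent real zeros of $p''$ to the left of $p_0$ satisfies, by the defining hypothesis of $\Gamma_{2321}$, that all its minimum real breakaway gains exceed $K_0$, so Lemma 3.17 forbids a root of $\Delta$ there; and if the left-infinite interval $(-\infty,z_s)$ is of $2q\pi$ degree, the hypothesis supplies either no standard real breakaway point or min-breakaway gains above $K_0$, whence Lemma 3.15 again forbids a gain of $K_0$. All $2q\pi+\pi$-degree intervals on the left, finite or infinite, carry no root of $\Delta$ for free. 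Combining the three regions, every interval of the real axis is free of roots of $\Delta$, so $\sharp_r\Delta=0$. The main obstacle is organizational rather than analytic: one must verify, via the parity rule of Lemma 2.3 (with the double pole at $p_0$ contributing two to the count), that the finitely many $2q\pi$-degree intervals on each side are exactly those constrained by the definition of $\Gamma_{2321}$, so that the breakaway-gain hypotheses feeding Lemmas 3.13, 3.15 and 3.17 cover every interval in which a gain of $K_0$ could otherwise occur.
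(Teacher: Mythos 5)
Your proposal is correct and follows essentially the same route as the paper: it splits the real axis at $p_0$, handles $(p_0,z_4)$ and $(z_3,p_0)$ via Lemma 3.24, and applies Lemmas 3.13 and 3.17 to the right of $z_4$ and Lemmas 3.15 and 3.17 to the left of $z_3$, exactly as the paper does. Your explicit remark that the $2q\pi+\pi$-degree intervals carry no roots of $\Delta$ by the degree argument is a small clarification the paper leaves implicit, but it does not change the argument.
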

\begin{proof}
When $PP\in \Gamma_{2321}$, $p$ has no real zeros. $p^{'}$ has only one real zero $p_0$. To the right of $p_0$, there is an odd number of real zeros of $p^{''}$. Therefore, (1.2) has a real pole $p_0$. To the right of $p_0$, there is an odd number of real zeros of (1.2). Assume $z_4$ is the real zero of $p^{''}$ adjacent to $p_0$ and located to the right of $p_0$.

In $(z_m,+\infty)$, there is no standard real breakaway point, or all minimum real breakaway points have a gain greater than $K_0$. On the real axis to the right of $p_0$, if there exists a finite interval of $2q\pi$ degree, in all such finite intervals, the gains of all minimum real breakaway points are greater than $K_0$. By Lemma  3.13 and Lemma 3.17, in $(z_m,+\infty)$, there are no roots of the polynomial $\Delta$. Similarly, in all finite intervals of $2q\pi$ degrees, there are no roots of $\Delta$.

By Lemma 3.24, within the interval $(p_0,z_4)$, there cannot be any roots of $\Delta$. Combining the above results, to the right of $p_0$, in $(p_0,+\infty)$, there cannot be any roots of $\Delta$.

To the left of $p_0$, there exist real zeros of $p^{''}$. Assume that $z_3$ is the real zero of $p^{''}$ adjacent to $p_0$ and located to the left of $p_0$. On the real axis to the left of $p_0$, if there exists a finite interval of $2q\pi$ degree, in all such finite intervals, all minimum real breakaway points have a gain greater than $K_0$. Moreover, if there also exists a left-infinite interval $(-\infty, z_s)$ of $2q\pi$ degree,  in $(-\infty, z_s)$, there is no standard real breakaway point, or all minimum real breakaway points have a gain greater than $K_0$. By Lemma 3.17 and Lemma 3.15, in all finite intervals of $2q\pi$ degrees, there are no roots of $\Delta$. In $(-\infty, z_s)$, there are no roots of $\Delta$.  Based on these two results, in the infinite interval $(-\infty,z_3)$ to the left of $z_3$, there are no roots of $\Delta$.

By Lemma 3.24, within the interval $(z_3,p_0)$, there cannot be any roots of $\Delta$. These results confirm that, to the left of $p_0$, there cannot be any roots of $\Delta$.

Combining these results, when $PP\in \Gamma_{2321}$, we conclude that $\sharp_r \Delta=0$.
\end{proof}

\begin{lemma}
When $PP\in \Gamma_{2322}$, then $\sharp_r \Delta>0$.
\end{lemma}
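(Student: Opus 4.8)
The plan is to exploit directly the defining property of $\Gamma_{2322}$ together with the interval-by-interval root-existence lemmas already proved, organised as a four-case analysis. First I would record the structural facts inherited from the ambient sets: since $\Gamma_{2322}\subseteq\Gamma_{232}\subseteq\Gamma_{23}\subseteq\Lambda_{23}$, the polynomial $p$ has no real zeros, $p^{'}$ has exactly one simple real zero $p_0$, and $p^{''}$ has real zeros on both sides of $p_0$ with an odd number of them lying to the right of $p_0$. Consequently (1.2) has the single second-order real pole $p_0$ together with several real zeros, so the relevant intervals---the right infinite interval $(z_m,+\infty)$, the left infinite interval $(-\infty,z_s)$ of $2q\pi$ degree, and the finite intervals of $2q\pi$ degree on either side of $p_0$---are exactly the intervals on which Lemmas 3.14, 3.16 and 3.18 operate. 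All three lemmas are available here because $\Gamma_{2322}\subseteq\Gamma_{23}$.

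Next I would unwind the definition of $\Gamma_{2322}$, which asserts the existence of at least one minimum real breakaway point whose gain does not exceed $K_0$ in one of four possible locations, and treat each location in turn. In the first case a minimum real breakaway point $b_{m2322}$ with $K(b_{m2322})\le K_0$ lies in $(z_m,+\infty)$; applying Lemma 3.14 yields a point $x_{*}\in(z_m,+\infty)$ with $K(x_{*})=K_0$, i.e.\ a real root of $\Delta$. In the second case the point $b_{r2322}$ lies in a finite interval of $2q\pi$ degree to the right of $p_0$, and Lemma 3.18 produces a real root of $\Delta$ there. The third and fourth cases, with $b_{s2322}$ in the left infinite interval $(-\infty,z_s)$ of $2q\pi$ degree or $b_{l2322}$ in a finite interval of $2q\pi$ degree to the left of $p_0$, are handled identically by Lemmas 3.16 and 3.18 respectively. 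In every case $\Delta$ acquires at least one real root, so $\sharp_r\Delta>0$.

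The step requiring the most care is verifying that this case analysis is genuinely exhaustive and that each interval invoked really is of $2q\pi$ degree, so that the located root of $\Delta$ (which lives only on $2q\pi$-degree root loci) is correctly placed. This is guaranteed by the construction of $\Gamma_{2322}$ itself, since the defining breakaway point is by hypothesis placed in an interval already declared to be of $2q\pi$ degree, and by Lemma 3.12, which ensures $K(x)$ is continuous on each such interval so that the Intermediate Value Theorem used inside Lemmas 3.14, 3.16 and 3.18 applies. I expect no genuine obstacle beyond this bookkeeping: the result is the mirror image of Lemma 3.21 and complementary to Lemma 3.25, and it follows by transporting the same continuity-plus-intermediate-value argument to whichever interval the defining minimum real breakaway point occupies.
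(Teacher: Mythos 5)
Your proposal is correct and follows essentially the same route as the paper: a four-case analysis on the location of the defining minimum real breakaway point, invoking Lemma 3.14 for the right infinite interval, Lemma 3.16 for the left infinite interval, and Lemma 3.18 for the finite intervals of $2q\pi$ degree on either side of $p_0$. The only difference is that the paper additionally cites Lemma 3.24 to note that $(p_0,z_4)$ and $(z_3,p_0)$ contain no roots of $\Delta$, which is not needed for the conclusion $\sharp_r\Delta>0$ and which you rightly omit.
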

\begin{proof}
When $PP\in \Gamma_{2322}$, $p$ has no real zeros. $p^{'}$ has only one real zero $p_0$. To the right of $p_0$, there is an odd number of real zeros of $p^{''}$. Consequently, (1.2) has a real pole $p_0$. To the right of $p_0$, there is an odd number of real zeros of (1.2). Let $z_4$ be the real zero of $p^{''}$ adjacent to $p_0$ and located to its right.

In $(z_m,+\infty)$, if there exists at least one minimum real breakaway point $b_{m2322}$ such that $K(b_{m2322})\le K_0$, then by Lemma 3.14, there exists at least one root of the polynomial $\Delta$ in $(z_m,+\infty)$. Thus, $\sharp_r \Delta > 0$.

On the real axis to the right of $p_0$, if there exists a finite interval of $2q\pi$ degree, and in at least one such finite interval, there exists at least one minimum real breakaway point $b_{r2322}$ such that $K(b_{r2322})\le K_0$, then by Lemma 3.18, there exists at least one root of $\Delta$ in that $2q\pi$-degree finite interval. Hence, $\sharp_r \Delta>0$.

By Lemma 3.24, within the interval $(p_0,z_4)$, there cannot be any roots of $\Delta$.

On the real axis to the left of $p_0$, there exists a real zero of $p^{''}$. If there exists a left-infinite interval of $2q\pi$ degree to the left of $p_0$, and in this infinite interval, there exists at least one minimum real breakaway point $b_{s2322}$ such that $K(b_{s2322})\le K_0$, then by Lemma 3.16, there exists at least one root of $\Delta$ in this left-infinite $2q\pi$-degree interval. Hence, $\sharp_r \Delta>0$.

Let $z_3$ be the real zero of $p^{''}$ adjacent to $p_0$ and located to its left. If there exists a finite interval of $2q\pi$ degree, and in at least one such finite interval, there exists at least one minimum real breakaway point $b_{l2322}$ such that $K(b_{l2322})\le K_0$, then by Lemma 3.18, there exists at least one root of $\Delta$ in the $2q\pi$-degree finite interval. Thus, $\sharp_r \Delta>0$.

By Lemma 3.24, within the interval $(z_3,p_0)$, there cannot be any roots of $\Delta$.

To summarize all the results above, when $PP\in \Gamma_{2322}$, then $\sharp_r \Delta>0$.
\end{proof}

By combining Lemma 3.4, Lemma 3.6, Lemma 3.20 and Lemma 3.25, we prove Theorem 1.12.

By combining Lemma 3.7, Lemma 3.8, Lemma 3.21, Lemma 3.22, Lemma 3.23 and Lemma 3.26, we establish:

\begin{lemma}
When $PP\in \Gamma_{122}\cup\Gamma_{22}\cup\Gamma_{2122}\cup\Gamma_{211}\cup\Gamma_{231}\cup\Gamma_{2322}$, then $\sharp_r \Delta>0$.
\end{lemma}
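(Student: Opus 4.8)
The plan is to read the statement as a finite disjunction over the six subsets comprising the union and to dispatch each disjunct to the lemma already proved for it. By the way $\Gamma$ and $\Lambda_2$ were partitioned, the six sets $\Gamma_{122}$, $\Gamma_{22}$, $\Gamma_{2122}$, $\Gamma_{211}$, $\Gamma_{231}$ and $\Gamma_{2322}$ are pairwise disjoint, so any $PP$ lying in $\Gamma_{122}\cup\Gamma_{22}\cup\Gamma_{2122}\cup\Gamma_{211}\cup\Gamma_{231}\cup\Gamma_{2322}$ belongs to exactly one of them. This reduces the proof to checking that each of the six cases has already been settled with the strict inequality, so that no additional analytic estimate is required once the membership of $PP$ is identified.

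First I would record the correspondence between subsets and the lemmas that treat them: Lemma 3.7 covers $\Gamma_{122}$, Lemma 3.8 covers $\Gamma_{22}$, Lemma 3.21 covers $\Gamma_{2122}$, Lemma 3.22 covers $\Gamma_{211}$, Lemma 3.23 covers $\Gamma_{231}$, and Lemma 3.26 covers $\Gamma_{2322}$. Each of these lemmas asserts precisely that $\sharp_r \Delta>0$ on its subset. The shared mechanism behind all six is the same: on each subset one exhibits an interval on the real axis on which the continuous gain $K(x)$ attains a value at least $K_0$ at (or near) one end—typically $+\infty$ at a real zero of $p''$, or $K(b)\le K_0$ at a minimum real breakaway point, or a maximum breakaway value $K(b_{122})\ge K_0$—while taking values below $K_0$ elsewhere; the Intermediate Value Theorem then forces a point $x_{*}$ with $K(x_{*})=K_0$, which is exactly a real root of $\Delta$. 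Thus in every one of the six cases the conclusion is already the strict positivity we want.

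Carrying this out, given any $PP$ in the union I would locate the unique subset containing it and invoke the matching lemma to obtain $\sharp_r \Delta>0$; since the six subsets exhaust the union, the inequality holds throughout, completing the proof. The only point demanding care—and the nearest thing to an obstacle—is the bookkeeping: one must confirm that the six named lemmas collectively tile the union exactly, with each lemma's hypothesis coinciding with membership in its subset, so that the case analysis has neither a gap nor a mismatched hypothesis. Because the partition of $\Gamma$ was constructed precisely so that each listed subset is accompanied by its own lemma (and the subsets are disjoint by the earlier intersection relations), any residual overlap would in any case be harmless, as all six conclusions agree. Hence there is no genuine analytic difficulty remaining, and the lemma follows by assembling the previously established cases.
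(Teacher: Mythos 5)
Your proposal is correct and matches the paper's own argument, which establishes this lemma by combining Lemma 3.7 ($\Gamma_{122}$), Lemma 3.8 ($\Gamma_{22}$), Lemma 3.21 ($\Gamma_{2122}$), Lemma 3.22 ($\Gamma_{211}$), Lemma 3.23 ($\Gamma_{231}$), and Lemma 3.26 ($\Gamma_{2322}$), exactly the case-by-case dispatch you describe. Your lemma-to-subset correspondence is accurate, so no further justification is needed.
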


By combining Theorem 1 in our paper\cite{Ma} and Lemma 3.27, we prove Theorem 1.11.

\medskip\noindent
\emph{Acknowledgements.} We want to thank Prof.~B.~Shapiro for his advices and help with the preliminary version of the paper. We also gratefully acknowledge the advice from the editors and reviewers.

\medskip
Conflict of interest: On behalf of all the authors, the corresponding author states that there are no conflicts of
interest.

Availability of data and material: No data were used to support this study.

\bibliographystyle{unsrt}

\end{document}